\documentclass{neutral}

\title{Numerical resolution by the quasi-reversibility method of a data completion problem for Maxwell's equations}

\addauthor{Marion Darbas}{marion.darbas@u-picardie.fr}{LAMFA CNRS UMR 7352 - Université de Picardie Jules Verne, 33 rue Saint-Leu, 80039 Amiens cedex 1}
\addauthor{Jérémy Heleine}{jeremy.heleine@u-picardie.fr}{LAMFA CNRS UMR 7352 - Université de Picardie Jules Verne, 33 rue Saint-Leu, 80039 Amiens cedex 1}
\addauthor{Stephanie Lohrengel}{stephanie.lohrengel@univ-reims.fr}{LMR CNRS UMR 9008 - Université de Reims Champagne-Ardenne, Moulin de la Housse, 51687 Reims cedex 2}

\keywords{Maxwell's equations, inverse problem, Cauchy problem, data completion, quasi-reversibility method, numerical simulations.}

\drafted{February 14, 2020}
\submitted{February 14, 2020}
\accepted{July 17, 2020}
\pubjournal{Inverse Problems and Imaging}

\newcommand{\bfH}{\mathbf{H}}

\begin{document}

	\maketitle

	\begin{abstract}
		This paper concerns the numerical resolution of a data completion problem for the time-harmonic Maxwell equations in the electric field.
		The aim is to recover the missing data on the inaccessible part of the boundary of a bounded domain from measured data on the accessible part.
		The non-iterative quasi-reversibility method is studied and different mixed variational formulations are proposed. Well-posedness, convergence and regularity results are proved.
		Discretization is performed by means of edge finite elements. Various two- and three-dimensional numerical simulations attest the efficiency of the method, in particular for noisy data.
	\end{abstract}

	\displaykeywords

	\section{Introduction}

	Let $\Omega$ denote a bounded and simply connected domain in $\R^3$ of boundary $\Gamma \coloneqq \partial\Omega$. The unit outward normal to $\Omega$ is denoted by $\bfn$. Assume the medium in $\Omega$ to be inhomogeneous and isotropic, of constant magnetic permeability $\mu = \mu_0$ with $\mu_0$ the magnetic permeability in vacuum. Let $\eps, \sigma$ be non-negative functions representing the electric permittivity and conductivity respectively. The refractive index $\kappa$ of the medium in $\Omega$ is defined by $\kappa(\bfx) = \frac{1}{\eps_0}\left(\eps(\bfx) + i\frac{\sigma(\bfx)}{\omega}\right), \bfx \in \Omega$, where $\eps_0$ is the electric permittivity in vacuum. We consider the Cauchy problem for the time-harmonic Maxwell equations at a frequency $\omega > 0$
	\begin{equation}
		\label{eq:maxwell}
		\curl\curl\bfE - k^2\kappa\bfE = 0, \quad \stext[r]{in} \Omega,
	\end{equation}
	with $\bfE$ the electric field intensity and $k \coloneqq \omega\sqrt{\mu_0\eps_0}$ the wavenumber. Namely, we assume that the Cauchy data $\bfE\times\bfn$ and $\curl\bfE\times\bfn$ are known on a part $\Gamma_0$
	of the boundary $\Gamma$, called the accessible part. The aim is to solve the data completion problem which consists in recovering the missing data on the inaccessible part $\Gamma\setminus\Gamma_0$ from overdetermined
	data on the accessible part. This covers for instance medical applications where measurements are taken at electrodes placed at specific positions on the skin (of the head or the chest). The numerical resolution of this class of inverse problems
	is challenging in many application fields such as non-destructive testing or medical imaging (electroencephalography, brain stroke detection) since only partial noisy data are available. Inversion methods are in general
	more efficient for total data known on the entire boundary. Data completion algorithms provide interesting preliminary tools to get an input for the numerical resolution of inverse problems (e.g~\cite{BourgeoisDardeIPI}).

	Cauchy problems are well-known to be ill-posed (e.g.~\cite{Alessandrini09, BenBelgacem}). Various regularization methods have been introduced for reconstructing the missing data for elliptic equations.
	There is an extensive literature on the numerical resolution of the problem. Most of the methods are iterative and are based on solving one or more well-posed direct problems at each iteration. They can thus be  numerically expensive.
	Classical least square methods with a Tikhonov regularization technique are commonly used~(see e.g. \cite{Cimetiere}).
	There are several techniques for the optimal choice of the regularization parameter such as the L-curve method and the Morozov discrepancy principle.
	Without being exhaustive, we can cite other approaches: a reconstruction of missing data based on the minimization of an energy functional is adopted in~\cite{Andrieux}. In~\cite{Ling}, a boundary
	control technique is used to obtain an approximation of the missing Dirichlet boundary data for the Laplace equation.
	An alternating iterative method to solve data completion for the Laplace equation and the Lamé system
	is originally introduced in~\cite{Kozlov} and is widely used and studied in the literature. In this vein, an approach combining the Steklov-Poincaré variational formulation of elliptic second order equations and the Lavrentiev's method has been proposed
	and validated in~\cite{BenBelgacem05, Azaiez, Azaiez11}.
	Methods based on integral equations have been also considered \cite{CakoniKress,Chapko,Chapko2}.

	Very few papers are concerned with the data completion problem for the vector Maxwell equations. We cite \cite{MeliSlo06,SloMeli10} where the Cauchy problem for a steady state eddy-current model problem is studied and an iterative algorithm based on a least-square functional is proposed.

	In this paper, we focus on the quasi-reversibility method which is a non-iterative approach. It was introduced by Lattès and Lions in~\cite{LattesLions67}. The idea is to replace the ill-posed Cauchy problem by a well-posed variational problem with additional unknowns which depends on a small regularization parameter. The variational setting is numerically interesting since finite element methods can be used.  The quasi-reversibility method has been successfully adopted and validated for Laplace's equation (see e.g.~\cite{Klibanov91,Bourgeois05,BourgeoisDarde,Darde}) and Helmholtz's equation~\cite{BR18}. We propose to study the quasi-reversibility method for the vector Maxwell equations. To the best of our knowledge, it's the first time that such an approach is studied for solving a data completion problem in electromagnetics. It is worth noticing that the functional framework of the vector Maxwell equations is substantially different from the Laplace equation or even the Lamé system. The natural vector space is given by fields with square integrable curl and a special attention has to be paid to the definition of the traces at the boundary. Furthermore, the vector Maxwell problem is numerically challenging due to the computational complexity, especially in three dimensions and the large kernel of the discrete $\curl$-$\curl$-operator. The conditioning of the corresponding matrix deteriorates with the mesh refinement and preconditioners have to be prescribed when iterative solvers are used (e.g.~\cite{GreifSchotzau, HiptmairXu}).

	The article is organized as follows. \autoref{sec:problem} is devoted to the setting of the problem. \autoref{sec:QR} presents a first version of the quasi-reversibility method for solving the data completion problem for Maxwell's equations in the electric field. Both theoretical and numerical aspects are addressed. \autoref{sec:RRQR} proposes two relaxed mixed problems with regularization in order to deal with noisy data. The relaxed problem is shown to be well posed, and convergence to the solution of the initial Cauchy problem is proven under some conditions on the involved regularization and relaxation parameters. A regularity result is given for one of the formulations. Numerical simulations confirm the theoretical results in two and three dimensions of space and attest the efficiency of the method. The choice of the different parameters is discussed. \autoref{sec:tikhonov} is a remark about the solutions of the proposed formulations being seen as the critical points of real-valued cost functions. Finally, we give some concluding remarks.

	\section{Position of the problem. Notations.}
	\label{sec:problem}

	In the sequel, define $\Gamma_0$, the accessible part of $\Gamma$ \cite{Brownetal16}, according to the following
	\begin{definition}
		\label{Def_accessible}
		Let $\Omega$ be a non-empty, open, bounded connected domain in $\R^3$ with Lipschitz boundary $\Gamma$. Let $\Gamma_0$ a smooth non-empty open subset of $\Gamma$ such that meas $\Gamma_0 > 0$. The part $\Gamma_0$ is called the accessible part of the boundary $\Gamma$ and $\Gamma_1 \coloneqq \Gamma \setminus \overline{\Gamma_0}$ the inaccessible part.
	\end{definition}

	\begin{remark}
		In this paper, we address the theoretical part in the three-dimensional case. All the results hold true in two dimensions as well, noticing that two curl-operators have to be distinguished in this case. Indeed, the curl of vector fields $\bfE = (E_1,E_2)$ with two components is defined by
		\[ \operatorname{curl}\bfE = \partial_1 E_2 - \partial_2 E_1, \]
		whereas the vector curl of a scalar function $\varphi$ is given by
		\[ \curl\varphi = \begin{pmatrix}
		\partial_2\varphi\\ -\partial_1\varphi
		\end{pmatrix}.
		\]
		Consequently, the operator of the partial differential equation reads $\curl\operatorname{curl}$. Numerical results are given for both 2D and 3D configurations.
	\end{remark}

	Let us make precise the functional setting of the data completion problem. The fundamental vector space for Maxwell's equation is given by
	$\Hcurl[\Omega] = \set*{\bfu \in L^2(\Omega)^3}{\curl\bfu \in L^2(\Omega)^3}$. For any vector field $\bfu \in \Hcurl[\Omega]$, we define the tangential trace by continuous extension of the mapping $\gamma_t(\bfu) \coloneqq \restriction{\bfu}{\Gamma} \times \bfn$. We introduce the trace space
	\begin{equation}\label{eq:YG}
		Y(\Gamma) = \set*{\bff \in H^{-1/2}(\Gamma)^3}{\Exists{\bfu \in \Hcurl[\Omega]} \gamma_t(\bfu) = \bff}
	\end{equation}
	and its restriction to $\Gamma_0$ in the distributional sense $Y(\Gamma_0) = \set*{\restriction{\bff}{\Gamma_0}}{\bff \in Y(\Gamma)}$. The following lemma holds.

	\begin{lemma}
		\label{lm:trace}
		The partial trace application $\bfu \mapsto \restriction{\gamma_t(\bfu)}{\Gamma_0}$ is linear, continuous and surjective from $\Hcurl[\Omega]$ to $Y(\Gamma_0)$. Moreover, there exists a continuous lifting application: we can find a constant $r > 0$ such that, for any $\bfv \in Y(\Gamma_0)$, there exists $\bfu \in \Hcurl[\Omega]$ with $\restriction{\gamma_t(\bfu)}{\Gamma_0} = \bfv$ and $\norm{\bfu}{\Hcurl[\Omega]} \leq r \norm{\bfv}{Y(\Gamma_0)}$.
	\end{lemma}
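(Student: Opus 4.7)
The plan is to reduce the statement to the classical trace theorem for the full tangential trace $\gamma_t \colon \Hcurl[\Omega] \to Y(\Gamma)$, which on a Lipschitz domain is continuous, surjective, and admits a continuous right inverse (see e.g.\ Buffa--Ciarlet--Sonnendrücker, as well as the references in \cite{Brownetal16}). Denote by $\pi \colon Y(\Gamma) \to Y(\Gamma_0)$ the restriction map $\bff \mapsto \restriction{\bff}{\Gamma_0}$; the operator of interest is the composition $\pi \circ \gamma_t$. I would equip $Y(\Gamma_0)$ with the natural quotient norm
\[
    \norm{\bfv}{Y(\Gamma_0)} \coloneqq \inf \{\, \norm{\bff}{Y(\Gamma)} : \bff \in Y(\Gamma),\ \restriction{\bff}{\Gamma_0} = \bfv \,\},
\]
for which $Y(\Gamma_0)$ is a Banach space and $\pi$ is a continuous surjection by construction.

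With this set-up, linearity of $\pi \circ \gamma_t$ is immediate, and continuity follows at once from the continuity of $\gamma_t$ and of $\pi$. Surjectivity is merely an unwinding of definitions: given $\bfv \in Y(\Gamma_0)$, write $\bfv = \restriction{\bff}{\Gamma_0}$ with $\bff \in Y(\Gamma)$ and pick $\bfu \in \Hcurl[\Omega]$ such that $\gamma_t(\bfu) = \bff$; then $(\pi \circ \gamma_t)(\bfu) = \bfv$.

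The only delicate point is producing a continuous lifting with a \emph{single} uniform constant $r > 0$. Two essentially equivalent routes are available. First, one can invoke the open mapping theorem directly: $\pi \circ \gamma_t$ is a continuous linear surjection between the Banach spaces $\Hcurl[\Omega]$ and $Y(\Gamma_0)$, hence open, which is precisely the announced estimate. Alternatively, one can exploit the Hilbert structure of $\Hcurl[\Omega]$: the kernel $K \coloneqq \{\, \bfu \in \Hcurl[\Omega] : \restriction{\gamma_t(\bfu)}{\Gamma_0} = 0 \,\}$ is closed, the restriction of $\pi \circ \gamma_t$ to $K^{\perp}$ is a continuous bijection onto $Y(\Gamma_0)$, and its inverse is bounded by the Banach isomorphism theorem; composing this inverse with the orthogonal projection onto $K^{\perp}$ then yields an explicit continuous lifting.

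The main obstacle, in my view, is not an analytic estimate but the consistent set-up of the functional framework --- verifying that $Y(\Gamma_0)$ endowed with the quotient norm is complete and that the distributional restriction $\pi$ is well-defined and continuous on $Y(\Gamma) \subset H^{-1/2}(\Gamma)^3$. Once this is in place, both the continuity and the uniform lifting bound with constant $r$ are corollaries of the classical tangential trace theorem on $\Gamma$ combined with the open mapping argument above.
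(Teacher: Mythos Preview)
Your proposal is correct and follows essentially the same route as the paper: the paper also reduces the statement to the classical surjectivity of $\gamma_t\colon \Hcurl[\Omega]\to Y(\Gamma)$ (citing \cite{Monk03}) and then observes that the induced map realises a homeomorphism between $\Hcurl[\Omega]/V_0$ and $Y(\Gamma_0)$, which is exactly your second argument with $K=V_0$. Your write-up is in fact more detailed than the paper's one-line justification, and your remark that the only real work lies in equipping $Y(\Gamma_0)$ with the quotient norm so that it is complete is precisely the point the paper leaves implicit.
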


	\autoref{lm:trace} is a direct consequence of the fact that the trace application $\gamma_t$ is linear, continuous and surjective from $\Hcurl[\Omega]$ to $Y(\Gamma)$ (see \cite{Monk03}). Then, we build an homeomorphism between $\Hcurl[\Omega] / V_0$ (see~\eqref{defVf}) and $Y(\Gamma_0)$.

	For further purposes, we also introduce the second trace operator $\gamma_T$ that is defined by $\gamma_T(\bfu) = (\bfn\times\bfu_{|\Gamma})\times\bfn$ for smooth vector fields and is well defined as an operator
	from $\Hcurl[\Omega]$ on the dual space of $Y(\Gamma)$, $Y(\Gamma)^\prime$
	(see \cite{Monk03} for details). The trace spaces for fields in $\Hcurl[\Omega]$ can be characterized more precisely depending on the regularity of the boundary (see \cite{Monk03, Buffa01, Buffa01bis}). However, for our study, the above abstract definition is sufficient.

	The set of admissible coefficients $\eps$ and $\sigma$ is given in the following definition.

	\begin{definition}
		\label{Def_admissible}
		The pair of coefficients $\eps$ and $\sigma$ is admissible if $\eps, \sigma \in \mcC^1(\overline{\Omega})$ such that $\eps(\bfx) \geq \tilde{\eps}$ and  $\sigma(\bfx) \geq 0$ almost everywhere in $\Omega$ for a strictly positive constant $\tilde{\eps}$.
	\end{definition}

	\begin{definition}
		\label{Def_Cauchyset}
		Let $\Omega$ and $\Gamma_{0}$ be as in \autoref{Def_accessible}. For a pair of admissible coefficients $(\eps, \sigma)$ defined on $\Omega$ as in \autoref{Def_admissible}, the corresponding Cauchy data set $C(\eps,\sigma; \Gamma_0)$ at a fixed frequency $\omega>0$ consists of pairs $(\bff, \bfg) \in Y(\Gamma_0) \times Y(\Gamma_0)$ such that there exists a field $\bfE \in \Hcurl[\Omega]$ satisfying
		\begin{equation}
			\label{eq:Cauchy}
			\left\{
			\begin{array}{rcl@{\hspace{4\tabcolsep}}l}
				\curl\curl\bfE - k^2\kappa\bfE &=& 0, & \stext[r]{in} \Omega, \\
				\gamma_t(\bfE) &=& \bff, & \stext[r]{on} \Gamma_0, \\
				\gamma_t(\curl\bfE) &=& \bfg, & \stext[r]{on} \Gamma_0,
			\end{array}
			\right.
		\end{equation}
		where $\kappa = (\eps + i \sigma/\omega)/\eps_0$.
	\end{definition}

	The definition of Cauchy data sets is used for instance in~\cite{Ola03, Caro11,CaroZhou14}. The partial boundary data are also given by the admittance map $\Lambda\colon \restriction{(\bfE\times\bfn)}{\Gamma} \mapsto \restriction{(\curl\bfE\times\bfn)}{\Gamma}$ restricted to $\Gamma_0$ if $\omega$ is not a resonant frequency for \eqref{eq:maxwell}.

	The Cauchy problem \eqref{eq:Cauchy} is known to be ill-posed in the sense that it may not possess a solution for arbitrary data $(\bff, \bfg)$ in $Y(\Gamma_0)\times Y(\Gamma_0)$ and that the solution is not stable with respect to the data. On the other hand, if the Cauchy problem \eqref{eq:Cauchy}
	admits a solution, this one is unique.
	This is an immediate consequence of the following Holmgren-type theorem (cf. \cite{Brownetal16}, Lemma 5.4):
	\begin{lemma}
		\label{lem:holmgren}
		Let $\Omega$ be a bounded connected Lipschitz domain. Let $\kappa\in \mathcal{C}^1(\overline{\Omega})$ such that $\real{\kappa} \ge c$ for a given constant $c>0$. Let $\Gamma_0$ be an open non-empty subset of $\partial\Omega$. If $\bfu\in \Hcurl[\Omega]$ is such that $\curl\curl\bfu\in L^2(\Omega)^3$ and
		\[
			\left\{
			\begin{array}{rcl@{\hspace{4\tabcolsep}}l}
				\curl\curl\bfu - k^2\kappa\bfu &=& 0, & \stext[r]{in} \Omega, \\
				\gamma_t(\bfu) &=& 0, & \stext[r]{on} \Gamma_0, \\
				\gamma_t(\curl\bfu) &=& 0, & \stext[r]{on} \Gamma_0,
			\end{array}
			\right.
		\]
		then $\bfu\equiv 0$ on $\Omega$.
	\end{lemma}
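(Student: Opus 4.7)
The plan is a standard two-step argument: extend $\bfu$ by zero across $\Gamma_0$ to a larger domain, and then invoke a unique continuation principle for the Maxwell operator on the enlarged domain.

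First, I would fix a point $\bfx_0$ in (the smooth part of) $\Gamma_0$ and a small open ball $B$ centred at $\bfx_0$ with $B \cap \partial\Omega \subset \Gamma_0$, and set $\Omega_e = B \setminus \overline{\Omega}$, $\tilde\Omega = \Omega \cup (B \cap \Gamma_0) \cup \Omega_e$. Define $\tilde\bfu = \bfu$ on $\Omega$ and $\tilde\bfu = 0$ on $\Omega_e$, and extend $\kappa$ to any $\mathcal{C}^1$ function $\tilde\kappa$ on $\tilde\Omega$ with $\real{\tilde\kappa} \ge c/2$. The condition $\gamma_t(\bfu) = 0$ on $\Gamma_0 \cap B$ is precisely the matching condition that glues two $\Hcurl$-fields across the interface, hence $\tilde\bfu \in \Hcurl[\tilde\Omega]$; similarly, $\gamma_t(\curl\bfu) = 0$ on $\Gamma_0 \cap B$ gives $\curl\tilde\bfu \in \Hcurl[\tilde\Omega]$, so that $\curl\curl\tilde\bfu \in L^2(\tilde\Omega)^3$. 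Since the equation holds identically on each piece, one checks that $\curl\curl\tilde\bfu - k^2\tilde\kappa\tilde\bfu = 0$ in $\tilde\Omega$ in the distributional sense, and by construction $\tilde\bfu$ vanishes on the non-empty open set $\Omega_e$.

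Second, I would apply unique continuation to $\tilde\bfu$. Taking the divergence of the equation gives $\operatorname{div}(\tilde\kappa\tilde\bfu) = 0$, so $\operatorname{div}\tilde\bfu = -(\nabla\tilde\kappa \cdot \tilde\bfu)/\tilde\kappa$ is in $L^2(\tilde\Omega)$ and depends only on $\tilde\bfu$ through a bounded coefficient. Substituting into the identity $\curl\curl = -\Delta + \nabla\operatorname{div}$ turns the Maxwell equation into a weakly coupled second-order elliptic system of the form $-\Delta\tilde\bfu = A(\bfx)\nabla\tilde\bfu + B(\bfx)\tilde\bfu$ with bounded coefficients. Aronszajn's strong unique continuation theorem applied componentwise, together with the connectedness of $\tilde\Omega$ and the fact that $\tilde\bfu$ vanishes on $\Omega_e$, yields $\tilde\bfu \equiv 0$ on $\tilde\Omega$, hence $\bfu \equiv 0$ on $\Omega$.

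The main obstacle I expect is the unique continuation step: for merely Lipschitz refractive index this is a genuinely delicate Carleman-estimate result, but the $\mathcal{C}^1$-regularity assumed here is exactly what allows the clean reduction to a scalar elliptic system and hence to classical UCP. A secondary technical point is the $\Hcurl$-gluing used in the extension step, which relies on the distributional definition of $\gamma_t$ and the standard characterisation of $\Hcurl$ via vanishing interface jumps of tangential traces; both are available in \cite{Monk03} and used throughout the paper.
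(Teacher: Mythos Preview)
The paper does not prove this lemma; it is quoted from \cite{Brownetal16}, Lemma~5.4, and used as a black box. Your two-step strategy---extend by zero across $\Gamma_0$, then invoke unique continuation on the enlarged domain---is the standard route and is presumably what \cite{Brownetal16} carries out. The extension step is correct, and your justification via the $\Hcurl$-gluing criterion for both $\tilde\bfu$ and $\curl\tilde\bfu$ is exactly right.

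There is, however, a genuine technical gap in your reduction for the unique-continuation step. You claim that substituting $\operatorname{div}\tilde\bfu = -\tilde\kappa^{-1}\nabla\tilde\kappa\cdot\tilde\bfu$ into the identity $\curl\curl = -\Delta + \nabla\operatorname{div}$ yields a system $-\Delta\tilde\bfu = A\nabla\tilde\bfu + B\tilde\bfu$ with \emph{bounded} coefficients, but expanding
\[
\nabla(\operatorname{div}\tilde\bfu) = -\nabla\bigl(\tilde\kappa^{-1}\nabla\tilde\kappa\cdot\tilde\bfu\bigr)
\]
produces a term involving the Hessian of $\tilde\kappa$, which need not exist for $\kappa\in\mathcal{C}^1(\overline\Omega)$. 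Your remark that ``the $\mathcal{C}^1$-regularity assumed here is exactly what allows the clean reduction'' is therefore off by one derivative: the reduction you describe actually requires $\kappa\in\mathcal{C}^2$ (or at least $W^{2,\infty}$), and Aronszajn's theorem cannot be invoked as stated. The repair is either to cite a unique-continuation result formulated directly for the time-harmonic Maxwell operator with $\mathcal{C}^1$ or Lipschitz refractive index---such Carleman-estimate results are available in the literature and are what the proof in \cite{Brownetal16} rests on---or to argue via the first-order system for the pair $(\tilde\bfu,\curl\tilde\bfu)$, rather than to pass through the componentwise scalar Laplacian.
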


	\begin{remark}
		The question whether a solution of the Cauchy problem exists for given data or not is difficult from a theoretical point of view (see \cite{Hadamard,BenBelgacem}). It is not addressed in the present paper. From a practical point of view, however, one can reasonably assume that the given data correspond to (noisy) measurements and given sources. These data are the consequence of an electric field that naturally satisfies the Cauchy problem. In the sequel, we therefore assume that the data belong to the Cauchy set associated with the Cauchy problem.
	\end{remark}

	\section{The quasi-reversibility method for Maxwell's equations}
	\label{sec:QR}

	From \autoref{lem:holmgren} and the \autoref{Def_Cauchyset} of Cauchy data sets, the Cauchy problem \eqref{eq:Cauchy} admits a unique solution for any couple $(\bff,\bfg)$ belonging to $\mathcal{C}(\eps,\sigma;\Gamma_0)$. Assuming that the set $\mathcal{C}(\eps,\sigma;\Gamma_0)$ is not empty, the rest of the paper is devoted to the numerical resolution of this data completion problem for Maxwell's equations.

	\subsection{Principle for data belonging to the trace space}

	The method of quasi-reversibility (called hereafter QR method) provides a regularized solution of Cauchy problems in a bounded domain. An abstract setting of the method is presented in~\cite{BR18}. Here, we prove that the abstract setting fits with the functional framework of Maxwell's equations.

	Let us introduce the spaces
	\begin{equation}
		\label{defVf}
		V_\bff = \set*{\bfu \in \Hcurl[\Omega]}{\gamma_t(\bfu) = \bff \stext{on} \Gamma_0}
	\end{equation}
	which is well defined for any $\bff \in Y(\Gamma_0)$, and
	\[M = \set*{\bfmu \in H(\curl)}{\gamma_t(\bfmu) = 0 \stext{on} \Gamma_1}.\]

	One may notice that a field $\bfmu$ belongs to the vector space $M$ if and only if
	\[ (\curl\bfmu,\Phi) = (\bfmu,\curl\Phi)\ \forall \Phi \in \  \stackrel{\circ\;}{\mathcal{C}^\infty_{\Gamma_0}}(\Omega)^3\]
	where the space $\stackrel{\circ\;}{\mathcal{C}^\infty_{\Gamma_0}}(\Omega)$
	contains the restriction to $\Omega$ of smooth functions $\phi$ with compact support in $\R^3$ such that ${\rm dist}({\rm supp}(\phi), \Gamma_0)>0$, in other words $\phi$ vanishes in a neighborhood of $\Gamma_0$.

	Assume that $(\bff,\bfg) \in C(\eps,\sigma; \Gamma_0)$ (see \autoref{Def_Cauchyset}) where $\eps$ and $\sigma$ are admissible coefficients (see \autoref{Def_admissible}). We denote by $a(\cdot,\cdot)$ the sesqui-linear form corresponding to \eqref{eq:maxwell}:
	\begin{equation}
		\label{def_a}
		a(\bfu,\bfv) = (\curl\bfu,\curl\bfv) - k^2(\kappa\bfu,\bfv)
	\end{equation}
	with $\bfu, \bfv \in \Hcurl[\Omega]$ and $(\cdot,\cdot)$ the dot-product in $L^2$.
	On $M$, we introduce the linear form $\ell(\cdot)$ defined by
	\begin{equation}
		\label{def_l}
		\ell(\bfpsi) = \duality{\bfg}{\gamma_T(\bfpsi)}{Y(\Gamma_0),Y(\Gamma_0)'}.
	\end{equation}

	\begin{proposition}
		\label{p:weakCauchy}
		Let $\bfE \in \Hcurl[\Omega]$ be a solution of the Cauchy problem \eqref{eq:Cauchy} for a couple $(\bff,\bfg)\in\mathcal{C}(\eps,\sigma;\Gamma_0)$. Then $\bfE$ belongs to $V_\bff$ and satisfies the following variational equation
		\begin{equation}
			\label{eq:weakCauchy}
			a(\bfE,\bfpsi) = \ell(\bfpsi)\ \forall\bfpsi\in M.
		\end{equation}
		Reciprocally, any solution of \eqref{eq:weakCauchy} satisfies \eqref{eq:Cauchy} in the distributional sense.
	\end{proposition}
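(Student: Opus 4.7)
The plan is to establish the forward implication through Green's formula for the $\curl$--$\curl$ operator, and the converse by splitting the class of test functions in $M$ so as to extract first the interior equation and then the missing Cauchy datum on $\Gamma_0$.

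For the direct part, let $\bfE\in \Hcurl[\Omega]$ solve \eqref{eq:Cauchy}. The inclusion $\bfE\in V_\bff$ is immediate from \eqref{defVf} and the boundary condition $\gamma_t(\bfE)=\bff$ on $\Gamma_0$. To obtain \eqref{eq:weakCauchy}, I would pair the PDE with an arbitrary $\bfpsi\in M$. Since $\curl\curl\bfE = k^2\kappa\bfE$ lies in $L^2(\Omega)^3$, the field $\curl\bfE$ belongs to $\Hcurl[\Omega]$ and the integration-by-parts formula for the $\curl$--$\curl$ operator applies, producing a boundary term of the form $\duality{\gamma_t(\curl\bfE)}{\gamma_T(\bfpsi)}{Y(\Gamma),Y(\Gamma)'}$. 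The algebraic identity $\gamma_T(\bfpsi)=-\gamma_t(\bfpsi)\times\bfn$ (which persists in the distributional setting) forces $\gamma_T(\bfpsi)$ to vanish on $\Gamma_1$ whenever $\gamma_t(\bfpsi)$ does, so the pairing localizes to $\Gamma_0$. Substituting the remaining Cauchy datum $\gamma_t(\curl\bfE)=\bfg$ on $\Gamma_0$ then recovers exactly $\ell(\bfpsi)$, which is \eqref{eq:weakCauchy}.

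For the converse, let $\bfE\in V_\bff$ satisfy \eqref{eq:weakCauchy}. The first Cauchy condition is built into $V_\bff$. I would begin by restricting the test functions to $\bfpsi\in \mathcal{D}(\Omega)^3\subset M$: for such $\bfpsi$ one has $\gamma_T(\bfpsi)=0$, so $\ell(\bfpsi)=0$ and the variational equation reduces to $a(\bfE,\bfpsi)=0$, which is the distributional formulation of $\curl\curl\bfE - k^2\kappa\bfE = 0$ in $\Omega$. In particular $\curl\curl\bfE \in L^2(\Omega)^3$, hence $\curl\bfE\in\Hcurl[\Omega]$ and $\gamma_t(\curl\bfE)$ is well defined in $Y(\Gamma)'$. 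Testing now against a general $\bfpsi\in M$ and reapplying Green's formula reduces \eqref{eq:weakCauchy} to the boundary identity $\duality{\gamma_t(\curl\bfE)-\bfg}{\gamma_T(\bfpsi)}{Y(\Gamma_0),Y(\Gamma_0)'}=0$ for every $\bfpsi\in M$. Invoking the surjective lifting of \autoref{lm:trace} to argue that $\{\gamma_T(\bfpsi)|_{\Gamma_0}:\bfpsi\in M\}$ sweeps a rich enough subspace of admissible test traces on $\Gamma_0$, one concludes $\gamma_t(\curl\bfE)=\bfg$ on $\Gamma_0$ in the distributional sense.

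The main obstacle is the bookkeeping on traces rather than any deep estimate: one must justify that the duality on $\Gamma$ genuinely localizes to $\Gamma_0$ under the constraint defining $M$, and that the admissible test traces $\gamma_T(\bfpsi)$ are plentiful enough on $\Gamma_0$ to identify $\gamma_t(\curl\bfE)|_{\Gamma_0}$ with $\bfg$. Both points rest on the lifting statement of \autoref{lm:trace} combined with the weak characterization of $M$ recorded just after its definition, which encodes the constraint $\gamma_t(\cdot)=0$ on $\Gamma_1$ as an integration-by-parts identity against fields vanishing near $\Gamma_0$.
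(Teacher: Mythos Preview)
Your overall strategy matches the paper's: Green's formula for the forward direction, and in the converse first $\mathcal{D}(\Omega)^3$ test fields to recover the PDE, then boundary test fields to recover $\gamma_t(\curl\bfE)=\bfg$ on $\Gamma_0$. The difference lies in how the boundary duality is localized to $\Gamma_0$, and this is where your argument has a genuine gap.

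You write that $\gamma_T(\bfpsi)=-\gamma_t(\bfpsi)\times\bfn$ ``persists in the distributional setting'' and that therefore the global pairing $\duality{\gamma_t(\curl\bfE)}{\gamma_T(\bfpsi)}{Y(\Gamma),Y(\Gamma)'}$ localizes to $\Gamma_0$ whenever $\gamma_t(\bfpsi)=0$ on $\Gamma_1$. This is precisely the delicate point. The duality between $Y(\Gamma)$ and $Y(\Gamma)'$ does not decompose additively over $\Gamma_0\cup\Gamma_1$ in any obvious way; elements of these trace spaces are only in $H^{-1/2}$, and the condition $\restriction{\gamma_t(\bfpsi)}{\Gamma_1}=0$ (restriction of a distribution) does not by itself guarantee that the global pairing against an arbitrary $\gamma_t(\curl\bfE)\in Y(\Gamma)$ depends only on data over $\Gamma_0$. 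Neither \autoref{lm:trace} nor the weak characterization of $M$ you cite supplies this: the former concerns surjectivity of $\gamma_t$ onto $Y(\Gamma_0)$, and the latter is an integration-by-parts identity against fields vanishing near $\Gamma_0$, which goes in the opposite direction.

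The paper closes this gap by invoking a density result from \cite{BPS16}: for every $\bfpsi\in M$ there is a sequence $\bfpsi_n$ of smooth fields vanishing in a neighborhood of $\Gamma_1$ with $\bfpsi_n\to\bfpsi$ in $\Hcurl[\Omega]$. For such $\bfpsi_n$ the trace $\restriction{\bfpsi_n}{\Gamma}$ has support compactly contained in $\Gamma_0$, so the boundary term is unambiguously a pairing over $\Gamma_0$; one then passes to the limit using continuity of $\gamma_T$. Similarly, in the converse the paper takes $\varphi\in\mathcal{C}^\infty_0(\Gamma_0)^3$ tangential and lifts it to a smooth $\bfpsi$ vanishing near $\Gamma_1$, which gives directly $\restriction{\bfpsi}{\Gamma_0}=\varphi$ and avoids having to argue abstractly that $\{\restriction{\gamma_T(\bfpsi)}{\Gamma_0}:\bfpsi\in M\}$ is rich enough. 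Your plan would go through once supplemented with this density ingredient; without it, the localization step is asserted rather than proved.
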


	\begin{proof}
		According to the definition of the Cauchy data set, $\bff$ and $\bfg$ belong to the trace space $Y(\Gamma_0)$. We deduce from a density result in \cite{BPS16} that for a given $\bfpsi \in M$, there is a sequence of fields $\bfpsi_n \in \ \stackrel{\circ\text{\;}}{\mathcal{C}^\infty_{\Gamma_1}}(\Omega)$ such that $\bfpsi_n\longrightarrow \bfpsi$ in
		$\Hcurl[\Omega]$. Since $\bfpsi_n$ vanishes identically on a neighborhood of $\Gamma_1$, its restriction to $\Gamma_0$ belongs to $\mathcal{D}(\Gamma_0)$. Any solution $\bfE$ of the Cauchy problem thus satisfies
		\[\duality{\gamma_t(\curl\bfE)}{\restriction{\bfpsi_n}{\Gamma_0}}{} = \duality{\bfg}{\restriction{\bfpsi_n}{\Gamma_0}}{} = \duality{\bfg}{\gamma_T(\bfpsi_n)}{\Gamma_0}\]
		since $\bfg\cdot\bfn = 0$ by definition of the trace space $Y(\Gamma)$.
		It follows from the classical Green's theorem in $\Hcurl[\Omega]$ that
		\begin{align*}
			\dotprod{\curl\curl\bfE}{\bfpsi_n}{} &= \dotprod{\curl\bfE}{\curl\bfpsi_n}{} - \duality{\gamma_t(\curl\bfE)}{\restriction{\bfpsi_n}{\Gamma_0}}{} \\
			&= \dotprod{\curl\bfE}{\curl\bfpsi_n}{} - \duality{\bfg}{\gamma_T(\bfpsi_n)}{\Gamma_0}.
		\end{align*}
		Taking the limit $n \to \infty$ then yields
		\[\dotprod{\curl\curl\bfE}{\bfpsi}{} = \dotprod{\curl\bfE}{\curl\bfpsi}{} - \duality{\bfg}{\gamma_T(\bfpsi)}{\Gamma_0}\]
		for any $\psi \in M$ since the trace operator $\gamma_T$ is continuous on $\Hcurl[\Omega]$.
		Inversely, if $\bfE \in V_\bff$ is a variational solution of \eqref{eq:weakCauchy}, taking $\psi \in \mathcal{C}^\infty_0(\Omega)^3$ yields immediately the partial differential equation of \eqref{eq:Cauchy}. The first boundary condition is included in the variational space $V_\bff$. Now, let $\varphi \in \mathcal{C}^\infty_0(\Gamma_0)^3$ be a regular vector field defined on the boundary with support in $\Gamma_0$ and such that $\varphi \cdot \bfn = 0$. $\varphi$ is the restriction to $\Gamma_0$ of a vector field $\bfpsi \in\ \stackrel{\circ\text{\;}}{ \mathcal{C}^\infty_{\Gamma_1}}(\Omega)^3$:  $\restriction{\bfpsi}{\Gamma_0} = \varphi$. Obviously, $\bfpsi$ belongs to the vector space $M$. The variational equation \eqref{eq:weakCauchy} and Green's formula yield
		\begin{align*}
			\duality{\bfg}{\varphi}{\Gamma_0} &= \dotprod{\curl\bfE}{\curl\bfpsi}{} - k^2\dotprod{\kappa\bfE}{\bfpsi}{} \\
			&= \duality{\gamma_t(\curl\bfE)}{\varphi}{\Gamma_0}
		\end{align*}
		and the second boundary condition follows.
	\end{proof}

	For small $\delta > 0$, we now consider the following weak mixed formulation: Find $(\bfE_\delta,\bfF_\delta) \in  V_\bff \times M$ such that
	\begin{equation}
		\label{eq:qr}
		\left\{
		\begin{array}{rcl@{\hspace{4\tabcolsep}}l}
			\delta\dotprod{\bfE_\delta}{\bfphi}{\Hcurl[\Omega]} + a(\bfphi,\bfF_\delta) &=& 0, & \forall \bfphi \in V_0, \\
			a(\bfE_\delta,\bfpsi) - \dotprod{\bfF_\delta}{\bfpsi}{\Hcurl[\Omega]} &=& \ell(\bfpsi), & \forall \bfpsi \in M,
		\end{array}
		\right.
	\end{equation}
	where $\dotprod{}{}{\Hcurl[\Omega]}$ denotes the dot-product in $\Hcurl[\Omega]$. \autoref{thm:convergence} states that the regularized solution $(\bfE_\delta, \bfF_\delta)$ tends to $(\bfE, 0)$
	when $\delta$ tends to 0, where $\bfE$ is solution to~\eqref{eq:Cauchy}.

	\begin{theorem}
		\label{thm:convergence}
		Let $(\bff,\bfg) \in Y(\Gamma_0) \times Y(\Gamma_0)$. For any $\delta > 0$, problem~\eqref{eq:qr} admits a unique solution $(\bfE_\delta,\bfF_\delta) \in  V_\bff \times M$.

		If, in addition, $(\bff,\bfg)$ belongs to the Cauchy data set $C(\eps, \sigma; \Gamma_0)$, then
		\begin{equation}
			\label{eq:convergence}
			\lim_{\delta \to 0} (\bfE_\delta,\bfF_\delta) = (\bfE, 0)
		\end{equation}
		in $V_f\times M$.
		Here, $\bfE\in V_{\bff}$ is the unique solution of the Cauchy problem \eqref{eq:Cauchy}.
	\end{theorem}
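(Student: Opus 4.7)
The plan is to split the theorem into the well-posedness at fixed $\delta>0$ and the passage to the limit $\delta\to 0$. The preliminary step for both parts is to lift the trace data using \autoref{lm:trace}: writing $\bfE_\delta=\tilde\bfE_\delta+\bfE_\bff$ with $\bfE_\bff\in V_\bff$ a continuous lifting of $\bff$, the unknowns $(\tilde\bfE_\delta,\bfF_\delta)$ live in $V_0\times M$ and satisfy the same mixed system with a modified but still continuous right-hand side, so that the whole problem can be set in the single Hilbert space $V_0\times M$ endowed with the sum of the $\Hcurl[\Omega]$-norms.

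For well-posedness I would introduce on $(V_0\times M)^2$ the global sesquilinear form
\begin{equation*}
B\bigl((\bfu,\bfv),(\bfphi,\bfpsi)\bigr)=\delta\dotprod{\bfu}{\bfphi}{\Hcurl[\Omega]}+a(\bfphi,\bfv)+a(\bfu,\bfpsi)-\dotprod{\bfv}{\bfpsi}{\Hcurl[\Omega]},
\end{equation*}
whose continuity is inherited from that of $a$ on $\Hcurl[\Omega]$. The crucial observation is that testing with $(\bfphi,\bfpsi)=(\bfu,-\bfv)$ makes the two $a$-terms cancel and produces $B((\bfu,\bfv),(\bfu,-\bfv))=\delta\norm{\bfu}{\Hcurl[\Omega]}^2+\norm{\bfv}{\Hcurl[\Omega]}^2$. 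This yields an inf-sup condition with constant of order $\min(\delta,1)$, and the transposed inf-sup follows symmetrically by testing with $(\bfu,\bfv)=(\bfphi,-\bfpsi)$. Existence and uniqueness then come from the Banach-Ne\v{c}as-Babu\v{s}ka theorem, in agreement with the abstract framework of~\cite{BR18}.

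For the convergence part I would exploit that, by \autoref{p:weakCauchy}, the pair $(\bfE,0)$ satisfies the system~\eqref{eq:qr} with $\delta=0$. Subtracting this from the $\delta$-problem and testing the first equation with $\bfphi=\bfE_\delta-\bfE\in V_0$ and the second with $\bfpsi=\bfF_\delta\in M$, the two copies of $a(\bfE_\delta-\bfE,\bfF_\delta)$ cancel and one is left with the clean identity
\begin{equation*}
\delta\dotprod{\bfE_\delta}{\bfE_\delta-\bfE}{\Hcurl[\Omega]}+\norm{\bfF_\delta}{\Hcurl[\Omega]}^2=0.
\end{equation*}
Taking real parts and applying Young's inequality then gives the a priori bounds $\norm{\bfE_\delta}{\Hcurl[\Omega]}\le\norm{\bfE}{\Hcurl[\Omega]}$ and $\norm{\bfF_\delta}{\Hcurl[\Omega]}^2\le(\delta/2)\norm{\bfE}{\Hcurl[\Omega]}^2$, so $\bfF_\delta\to 0$ strongly in $\Hcurl[\Omega]$.

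Finally, by weak compactness a subsequence of $\bfE_\delta$ converges weakly in $\Hcurl[\Omega]$ to some $\bfE^\star\in V_\bff$ (using weak continuity of the partial trace of \autoref{lm:trace}); passing to the limit in the second equation of~\eqref{eq:qr} yields $a(\bfE^\star,\bfpsi)=\ell(\bfpsi)$ for every $\bfpsi\in M$, which combined with \autoref{p:weakCauchy} and the Holmgren uniqueness of \autoref{lem:holmgren} forces $\bfE^\star=\bfE$, so the whole family converges weakly. The a priori bound $\norm{\bfE_\delta}{\Hcurl[\Omega]}\le\norm{\bfE}{\Hcurl[\Omega]}$ together with weak lower semicontinuity of the norm then produces $\norm{\bfE_\delta}{\Hcurl[\Omega]}\to\norm{\bfE}{\Hcurl[\Omega]}$, which upgrades weak to strong convergence in the Hilbert space. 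I expect the main obstacle to be the careful handling of the sesquilinear conjugation conventions so that the cross $a$-terms truly cancel in both the inf-sup argument and the energy identity; once that bookkeeping is settled, the rest of the argument proceeds by standard weak-compactness and uniqueness reasoning.
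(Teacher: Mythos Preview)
Your proof is correct and follows essentially the same strategy that the paper (via its reference~\cite{BR18}) uses: lift the trace, assemble the two equations of~\eqref{eq:qr} into a single variational problem on $V_0\times M$, derive the energy identity $\delta\dotprod{\bfE_\delta}{\bfE_\delta-\bfE}{\Hcurl[\Omega]}+\norm{\bfF_\delta}{\Hcurl[\Omega]}^2=0$, and conclude by weak compactness, Holmgren uniqueness, and upgrading via norm convergence. This is exactly the argument the paper spells out in the closely related proof of \autoref{thm:convergence-relaxed}.

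The only cosmetic difference lies in your sign convention. By \emph{adding} the two lines of~\eqref{eq:qr} you obtain a global form $B$ that is not coercive, and you must test with $(\bfu,-\bfv)$ to get an inf--sup condition and then invoke Banach--Ne\v{c}as--Babu\v{s}ka. The paper instead \emph{subtracts} the second line (see the explicit form $A_\alpha$ in the proof of \autoref{thm:convergence-relaxed}), which makes the cross terms $a(\bfphi,\bfv)-a(\bfu,\bfpsi)$ cancel already for $(\bfphi,\bfpsi)=(\bfu,\bfv)$; the resulting form is genuinely coercive with constant $\min(\delta,1)$ and Lax--Milgram applies directly. Both choices yield the same constant and the same well-posedness result; the paper's version simply spares you the transposed inf--sup check and the bookkeeping you flag as the ``main obstacle''.
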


	\begin{proof}
		Since the trace operator $\gamma_t$ is onto according to \autoref{lm:trace}, we can apply Theorem 2.4 from \cite{BR18}. Problem~\eqref{eq:qr} can thus be written in a closed form as a classical variational formulation in the unknowns $(\bfE_\delta,\bfF_\delta)$, involving a continuous and coercive sesqui-linear form $A_\delta\left((\cdot, \cdot)\right)$ defined on $\Hcurl[\Omega] \times M$. Existence and uniqueness follow from Lax-Milgram's Lemma.
		Assume in addition that the couple $(\bff,\bfg)$ belongs to the Cauchy set $\mathcal{C}(\eps,\sigma;\Gamma_0)$. Hence, there exists a field $\bfE\in V_\bff$, solution to the Cauchy problem {\eqref{eq:Cauchy}}. From \autoref{lem:holmgren}, we infer that $\bfE$ is unique. The convergence of the sequence $(\bfE_\delta,\bfF_\delta)$ to $(\bfE,0)$ now follows as in \cite{BR18} since $\bfE$ is the only solution to the variational formulation.
	\end{proof}

	\subsection{Discretization by FEM and numerical results}
	\label{sec:numerical_classic}

	A first series of numerical tests for two dimensional results investigates in which way the size of the accessible part $\Gamma_0$ influences the quality of the data completion procedure. In view of possible applications of the QR method in inverse coefficient problems, we are particularly interested in ring-like computational domains $\Omega$. Indeed, in many configurations, the material parameters are known in a ring-like neighborhood of the boundary, and data completion has to be applied to map the available information from part of the outer boundary to the inner boundary.

	The numerical solver for the Maxwell equations has been implemented with FreeFem++ (see~\cite{Hecht12}). We consider a regular discretization $\mcT_h$ of $\overline{\Omega}$. For any element $T \in \mcT_h$, let $h_T$ be its diameter. Then $h = \max_{T \in \mcT_h} h_T$ is the mesh parameter of $\mcT_h$. Conforming edge finite elements of order 1 (see~\cite{Nedelec80}) are used to approximate the solution of the regularized problem~\eqref{eq:qr}.
	The resulting discrete formulation reads as follows: Find $(\bfE_\delta^h,\bfF_\delta^h)\in V_\bff^h\times M^h$ such that
	\begin{equation}
		\label{eq:qr-discret}
		\left\{
		\begin{array}{rcl@{\hspace{4\tabcolsep}}l}
			\delta\dotprod{\bfE_\delta^h}{\bfphi^h}{\Hcurl[\Omega]} + a(\bfphi^h,\bfF_\delta^h) &=& 0, & \forall \bfphi^h \in V_0^h, \\
			a(\bfE_\delta^h,\bfpsi^h) - \dotprod{\bfF_\delta^h}{\bfpsi^h}{\Hcurl[\Omega]} &=& \ell(\bfpsi^h), & \forall \bfpsi^h \in M^h.
		\end{array}
		\right.
	\end{equation}

	\begin{proposition}
		\label{p:discretization}
		For any $\delta>0$, problem \eqref{eq:qr-discret} admits a unique solution. If in addition, the solution fields $\bfE_\delta$ and $\bfF_\delta$ of problem \eqref{eq:qr} belong to $H^s(\curl;\Omega)=\set*{\bfu \in H^s(\Omega)^3}{\curl\bfu\in H^s(\Omega)^3}$ with $1/2< s \le 1$,  the following error estimate holds true
		\begin{equation}\label{eq:discretization}
			\norm{(\bfE_\delta - \bfE_\delta^h,\bfF_\delta-\bfF_\delta^h)}{\Hcurl[\Omega]}^2 \le C \frac{h^s}{\min(\delta,1)} \norm{(\bfE_\delta,\bfF_\delta)}{H^s(\curl;\Omega)}^2.
		\end{equation}
	\end{proposition}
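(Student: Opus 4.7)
The plan is to replay, at the discrete level, the argument used for \autoref{thm:convergence}, then combine conformity with the classical interpolation theory for first-order Nédélec elements. First, I would recast \eqref{eq:qr-discret} as a single Lax--Milgram problem driven by the sesquilinear form
\[
A_\delta\bigl((\bfu,\bfF),(\bfphi,\bfpsi)\bigr) = \delta\,\dotprod{\bfu}{\bfphi}{\Hcurl[\Omega]} + a(\bfphi,\bfF) + a(\bfu,\bfpsi) - \dotprod{\bfF}{\bfpsi}{\Hcurl[\Omega]}
\]
on $\Hcurl[\Omega]\times M$. Testing with $(\bfu,-\bfF)$ makes the two occurrences of $a$ cancel and yields $A_\delta((\bfu,\bfF),(\bfu,-\bfF))=\delta\,\norm{\bfu}{\Hcurl[\Omega]}^2+\norm{\bfF}{\Hcurl[\Omega]}^2$, so $A_\delta$ is coercive with constant $\min(\delta,1)$, while its continuity constant is uniformly bounded for $\delta$ in any bounded range. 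After subtracting a discrete lifting $\bfE^h_\bff\in V_\bff^h$ of the Dirichlet datum $\bff$, Lax--Milgram applied on $V_0^h\times M^h$ delivers existence and uniqueness of $(\bfE_\delta^h,\bfF_\delta^h)$.

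Second, I would use the conformity $V_0^h\subset V_0$, $M^h\subset M$ to obtain Galerkin orthogonality for $A_\delta$ between the continuous and discrete solutions, from which the standard Céa estimate gives
\[
\norm{(\bfE_\delta-\bfE_\delta^h,\bfF_\delta-\bfF_\delta^h)}{\Hcurl[\Omega]}\leq \frac{C}{\min(\delta,1)}\inf_{(\bfphi^h,\bfpsi^h)\in V_\bff^h\times M^h}\norm{(\bfE_\delta-\bfphi^h,\bfF_\delta-\bfpsi^h)}{\Hcurl[\Omega]},
\]
with $C$ independent of $h$ and $\delta$.

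Third, to bound the infimum I would plug in $(\bfphi^h,\bfpsi^h) = (\Pi_h\bfE_\delta,\Pi_h\bfF_\delta)$, where $\Pi_h$ is the Nédélec edge interpolant of order one. The assumption $\bfE_\delta,\bfF_\delta\in H^s(\curl;\Omega)$ with $s>1/2$ is exactly what is required to give meaning to the line integrals defining $\Pi_h$ and to invoke the classical estimate $\norm{\bfu-\Pi_h\bfu}{\Hcurl[\Omega]}\le C\,h^s\,\norm{\bfu}{H^s(\curl;\Omega)}$ (see, e.g., Monk). The commutation of $\Pi_h$ with the tangential trace further guarantees that $\Pi_h\bfE_\delta$ respects the Dirichlet condition on $\Gamma_0$ at the discrete level and that $\Pi_h\bfF_\delta$ has vanishing tangential trace on $\Gamma_1$, so both interpolants land in the correct discrete spaces.

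The main obstacle is the careful bookkeeping of the non-homogeneous essential condition $\gamma_t(\bfu)=\bff$ on $\Gamma_0$: the lifting from \autoref{lm:trace} and the Nédélec interpolation must be combined consistently so that $\Pi_h\bfE_\delta-\bfE^h_\bff$ genuinely lies in $V_0^h$, i.e.\ the edge trace of $\Pi_h\bfE_\delta$ on $\Gamma_0$ must coincide with that of the selected discrete lifting $\bfE^h_\bff$. Once this compatibility is handled, squaring the combined Céa plus interpolation bound produces \eqref{eq:discretization}; the factor $1/\min(\delta,1)$ merely reflects the scaling of the coercivity constant of $A_\delta$ with the regularization parameter $\delta$.
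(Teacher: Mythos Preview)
Your proposal is correct and follows essentially the same route as the paper: write the mixed problem as a single coercive variational problem with coercivity constant $\min(\delta,1)$, invoke C\'ea's lemma on the conforming discrete spaces, and conclude with the standard N\'ed\'elec interpolation estimate under the $H^s(\curl;\Omega)$ regularity assumption. Your discussion of the discrete lifting and of why $\Pi_h\bfE_\delta$, $\Pi_h\bfF_\delta$ land in the correct discrete subspaces is in fact more detailed than the paper's own argument, which simply quotes C\'ea's lemma and the interpolation bound from \cite{Monk03} without elaborating on these points.
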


	\begin{proof}
		Since problem~\eqref{eq:qr} can be written as a variational formulation with a continuous and coercive sesqui-linear form, the discrete problem is well posed and enters within the framework of Céa's lemma. We thus have, for all $(\bfphi^h,\bfpsi^h)\in V_0^h\times M^h$,
		\[\norm{(\bfE_\delta - \bfE_\delta^h,\bfF_\delta-\bfF_\delta^h)}{\Hcurl[\Omega]}^2 \le \frac{C}{\alpha} \norm{(\bfE_\delta - \bfphi^h,\bfF_\delta-\bfpsi^h)}{\Hcurl[\Omega]}^2.\]
		Here, $C$ and $\alpha$ are respectively the continuity and coercivity constants of the involved sesqui-linear form. Classical estimates for the considered finite elements yield that the interpolation error is of order $\mcO(h^s)$ (see e.g.~\cite{Monk03}) under the given regularity assumptions on the exact solution. However, the coercivity constant of the sesqui-linear form is given by $\alpha = \min\collection{\delta,1}$. The final discretization error is thus of order $\mcO\left(\frac{h^s}{\min(\delta,1)}\right)$.
	\end{proof}

	\autoref{p:discretization} implies that the regularization parameter $\delta$ has to be chosen with respect to the mesh size and arbitrary small values of $\delta$ are prohibited.

	In all the simulations, we fix $\eps_0 = \mu_0 = \omega = 1$ and $\kappa = 1 + i$. Our reference solution is the plane wave $\bfE(\bfx) = \bfeta^\perp e^{ik\sqrt{\kappa}\bfeta\cdot\bfx}$ where $\bfeta \in \R^2$ is the wave propagation vector and $\bfeta^\perp$ is a unit vector orthogonal to $\bfeta$. The square-root $\sqrt{\kappa}$ stands for the classical complex square-root with branch-cut along the negative real axis.

	\subsubsection{Unit disc}

	In the first example, $\Omega$ is the unit disc discretized with a mesh of size $h = \scnum{2.26e-2}$, ending up with \num{43256} triangles and \num{65134} edges. Two different configurations of accessible/inaccessible parts are tested (cf. \autoref{fig:config_Gamma}):
	\begin{enumerate}
		\item In the configuration G34, $\Gamma_0$ is the arc of circle starting at angle 0 and ending at angle $\dfrac{3\pi}{2}$.  The accessible part then represents \pc{75} of the  boundary $\Gamma$.
		\item In the configuration GE37, $\Gamma_0$ represents a set of 37 equally distributed electrodes of common length $\displaystyle \frac{\pi}{25}$. Electrodes cover \pc{81.6} of the boundary $\Gamma$.
	\end{enumerate}

	\begin{figure}[hbt]
		\centering
		\includegraphics[width=0.3\textwidth]{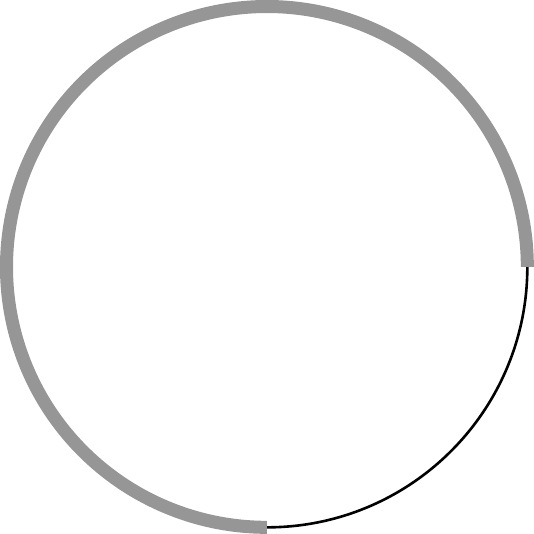}
		\hspace{0.2\textwidth}
		\includegraphics[width=0.3\textwidth]{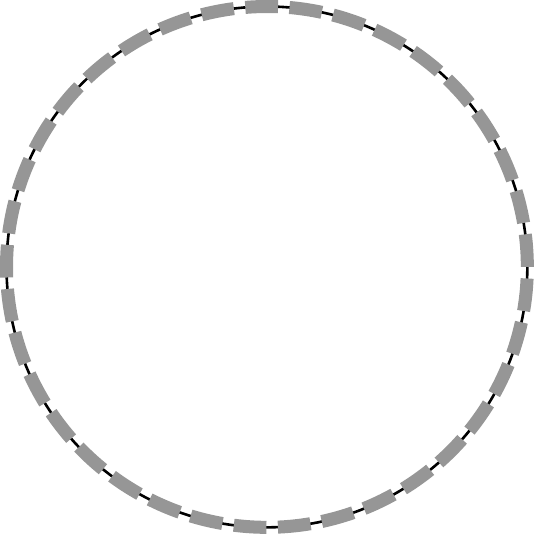}
		\caption{Choice of the accessible part $\Gamma_0$ (grey line). Left: configuration G34. Right: configuration GE37.}
		\label{fig:config_Gamma}
	\end{figure}

	In \autoref{fig:err_delta}, we show the relative error in the $L^2(\Omega)$-norm between the exact solution $\bfE$ and its approximation obtained by the numerical resolution of \eqref{eq:qr}, with respect to $\delta$. For both configurations, we indicate the parameter $\delta$ for which the error is minimal. Notice that the behaviour of the error for the configuration G34 shown in \autoref{fig:err_delta} is in agreement with the error analysis which claims that the error behaves as $\mcO\left(\frac{h^s}{\delta}\right)$: at fixed mesh size the error increases for small values of $\delta$.

	In \autoref{tab:err_qr}, we report different errors obtained for the value $\delta = \num{9.103e-7}$. The approximation of the electric field $\bfE$ is more accurate on the whole domain $\Omega$ than on the inaccessible part $\Gamma_1$. The configuration GE37 yields better results since the  accessible part covers a larger part of the boundary $\Gamma$. In \autoref{fig:err_qr}, we illustrate these results by plotting the modulus of the error $\abs{\bfE - \bfE_\delta}$ in the domain $\Omega$. The largest errors are located on $\Gamma_1$, and in particular at the intersection between $\Gamma_0$ and $\Gamma_1$.
	Indeed, the solution $(\bfE_\delta,\bfF_\delta)$ of \eqref{eq:qr} is the weak solution of a boundary value problem in $\Omega$ with mixed boundary conditions for $\bfE_\delta$ and $\bfF_\delta$. Mixed boundary conditions are known to induce singularities in the fields (see e.g. \cite{Grisvard} for the simpler case of the Laplace operator). This could explain the singular behavior at the intersection points.

	\begin{figure}[hbt]
		\centering
		\includegraphics[width=0.45\textwidth]{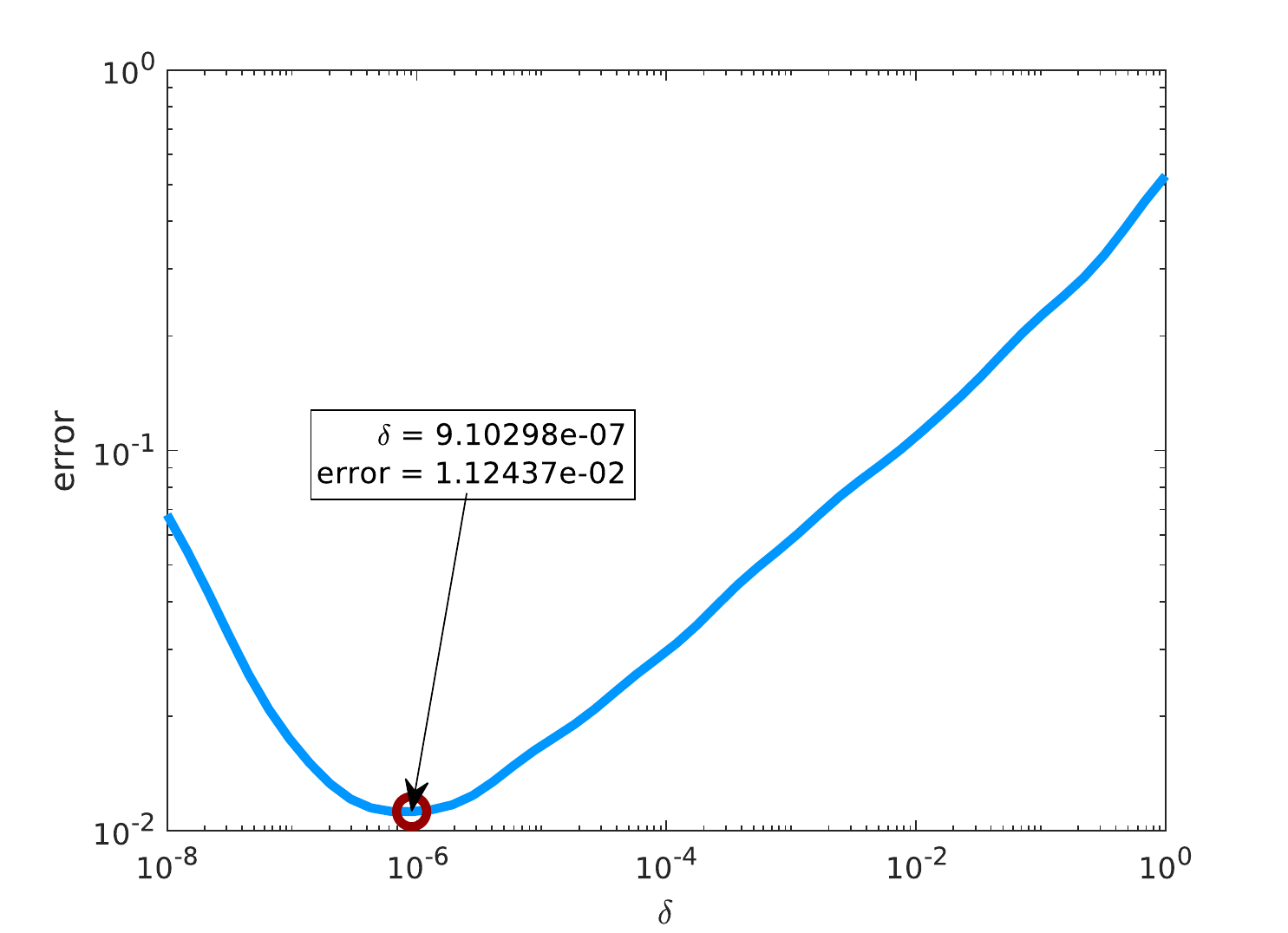}
		\hspace{0.05\textwidth}
		\includegraphics[width=0.45\textwidth]{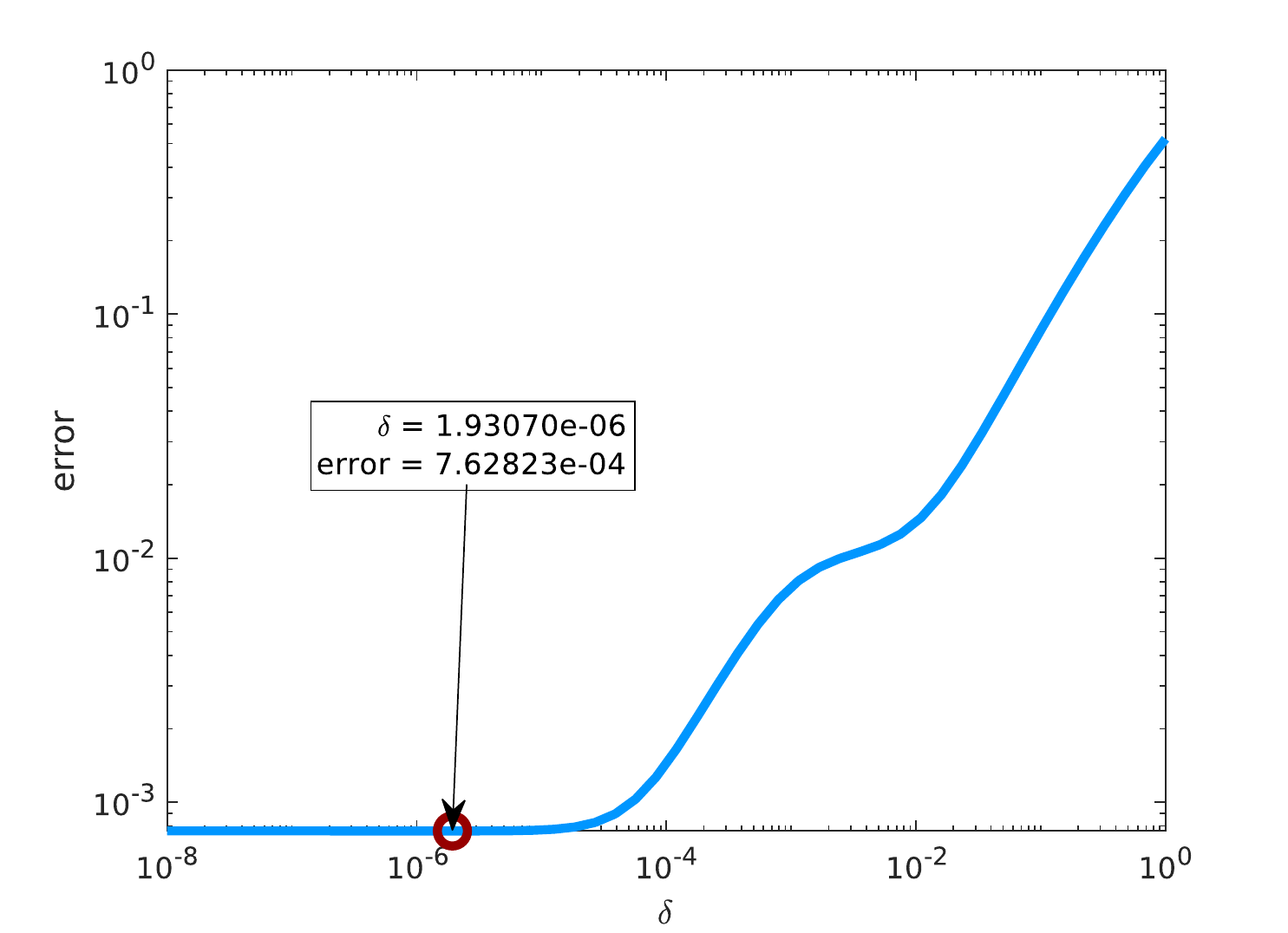}
		\caption{Unit disc. QR method. Relative error $\frac{\norm{\bfE - \bfE_\delta}{0,\Omega}}{\norm{\bfE}{0,\Omega}}$ with respect to the regularization parameter $\delta$. Left: configuration G34. Right: configuration GE37.}
		\label{fig:err_delta}
	\end{figure}

	\begin{table}[hbt]
		\centering
		\caption{Unit disc. QR method. Errors for $\delta = \num{9.103e-7}$.} 	\label{tab:err_qr}
		\begin{tabular}{cccc}
			\toprule
			Configuration & $\frac{\norm{\bfE - \bfE_\delta}{0,\Omega}}{\norm{\bfE}{0,\Omega}}$ & $\frac{\norm{(\bfE - \bfE_\delta) \times \bfn}{0,\Gamma_1}}{\norm{\bfE \times \bfn}{0,\Gamma_1}}$ & $\norm{\bfF_\delta}{0,\Omega}$ \\
			\midrule
			G34 & 1.1244e-02  & 1.7302e-01 & 1.8374e-04 \\
			GE37 & 7.6288e-04  & 1.5185e-02 & 1.8224e-04 \\
			\bottomrule
		\end{tabular}
	\end{table}

	\begin{figure}[hbt]
		\centering
		\includegraphics[width=0.45\textwidth]{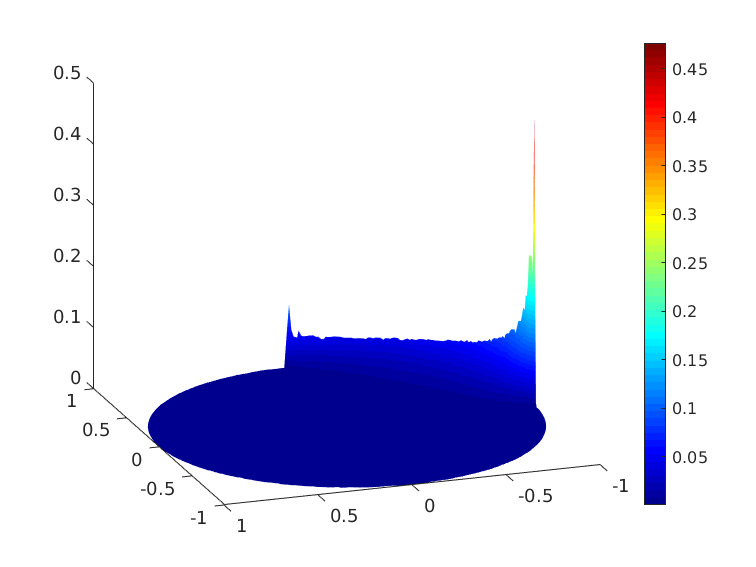}
		\hfill
		\includegraphics[width=0.45\textwidth]{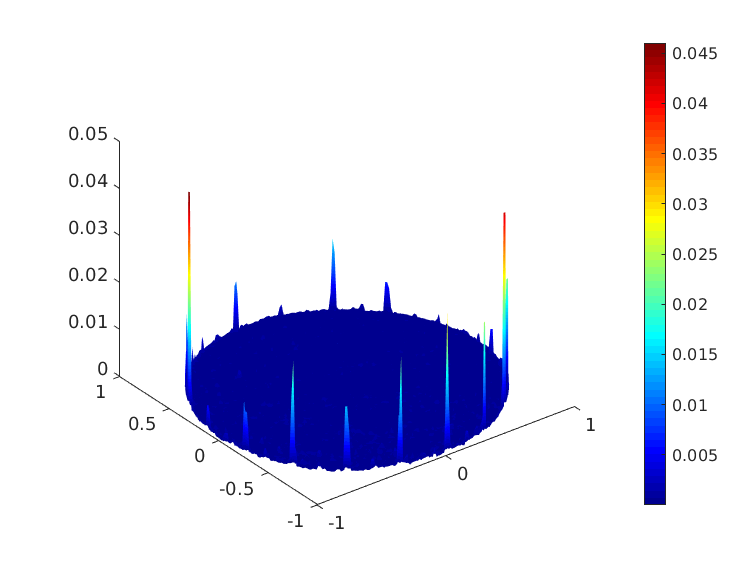}
		\caption{Unit disc. QR method. Modulus of the error $\abs{\bfE - \bfE_{\delta}}$. Left: configuration G34. Right: configuration GE37.}
		\label{fig:err_qr}
	\end{figure}

	\subsubsection{Ring}
	\label{sec:qr-ring}

	For the inverse problems we are studying, we are interested in ring-like domains which, for instance, model the different tissues of the head. The goal consists in mapping  the data measured on part of the exterior boundary to an inner interface, allowing in a second step to use reconstruction methods in the interior domain. In this section, we present numerical results on a ring of internal radius 0.75, discretized with a mesh size $h = \scnum{2.05e-2}$, \num{17919} triangles and \num{27316} edges. Three measurement configurations are compared: G34 and GE37, both described in the previous section, and GExt where $\Gamma_0$ is the whole external boundary (see \autoref{fig:config_Gamma_ring}).

	\begin{figure}[hbt]
		\centering
		\includegraphics[width=0.3\textwidth]{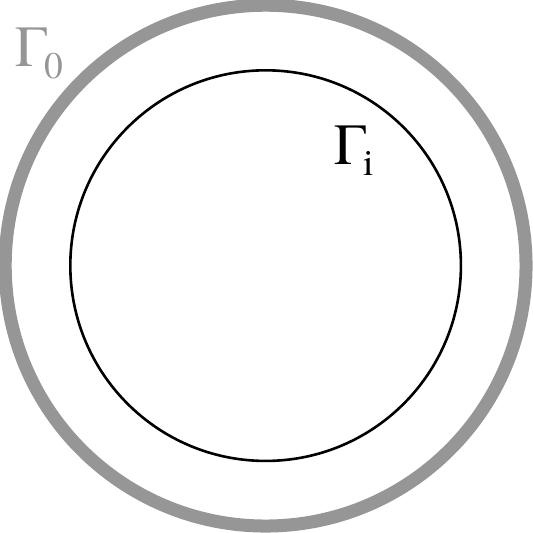}
		\hfill
		\includegraphics[width=0.3\textwidth]{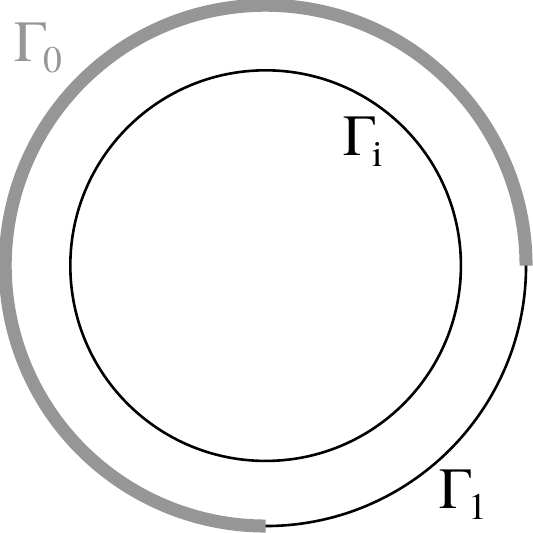}
		\hfill
		\includegraphics[width=0.3\textwidth]{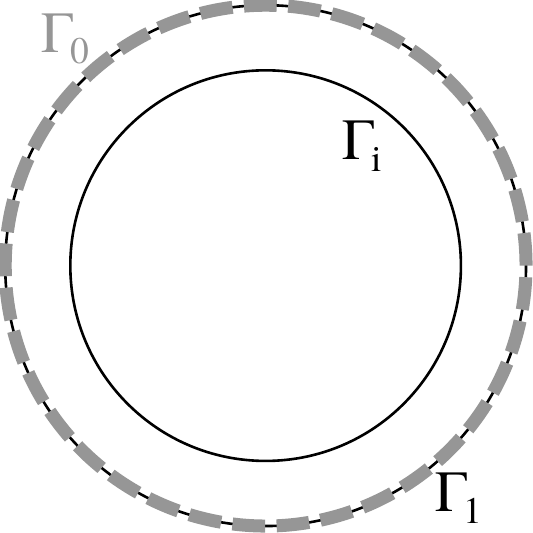}
		\caption{Choice of the accessible part $\Gamma_0$ (grey line).  Left: configuration GExt ($\Gamma_0$ covers \pc{57} of the ring boundary). Middle: configuration G34 (\pc{43}). Right: configuration GE37 (\pc{47}).}
		\label{fig:config_Gamma_ring}
	\end{figure}

	The relative errors with respect to $\delta$ are shown in \autoref{fig:err_delta_ring}. For each configuration, we list in \autoref{tab:err_qr_ring} different errors obtained for the value $\delta$ realizing the minimum indicated in \autoref{fig:err_delta_ring}.
	Notice that for the ring-like domain we split the inaccessible part into an exterior part $\Gamma_1$ and an interior part $\Gamma_i$. Furthermore, the modulus of the error $\abs{\bfE - \bfE_{\delta}}$ is reported in \autoref{fig:err_qr_ring}. We observe that the electric field $\bfE$ is well approximated in the ring by the QR approach when the data are available on the entire exterior boundary (configuration GExt). The transmission of the information on the inner boundary $\Gamma_i$ is very accurate. The configuration GE37 with electrodes leads also to very interesting results with errors below $\pc{4}$ on the inaccessible part $\Gamma_1$ and the inner boundary $\Gamma_i$. The analysis of the results obtained with configuration G34 is less obvious: whereas the error of the auxiliary field $\bfF_\delta$ is very satisfying, we observe on $\Omega$, $\Gamma_1$ and $\Gamma_i$ errors of \pc{35}, \pc{70}, and \pc{48}, respectively. The next section will propose a possible improvement of the QR method for such a configuration.

	\begin{figure}[hbt]
		\centering
		\includegraphics[width=0.32\textwidth]{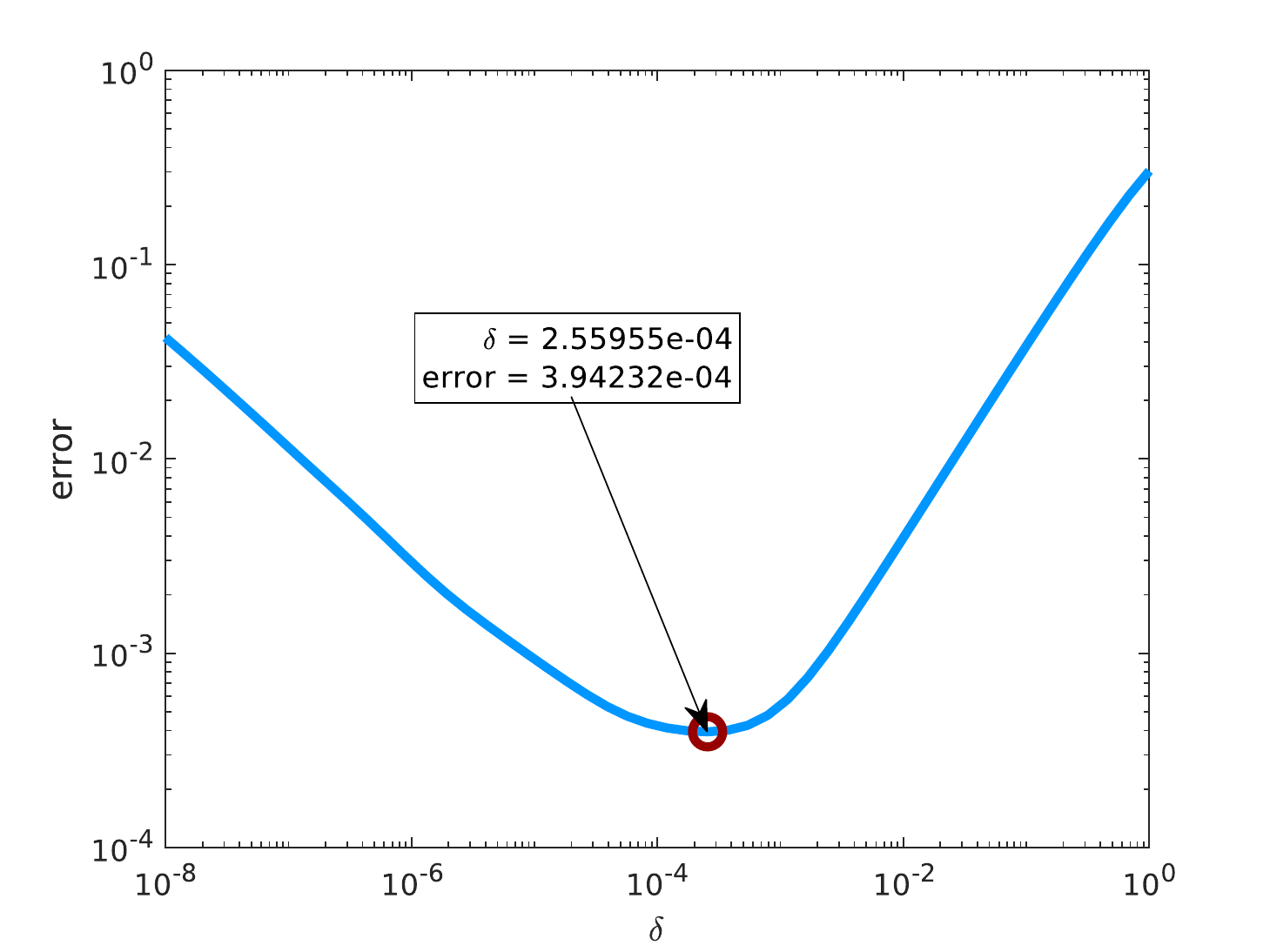}
		\hfill
		\includegraphics[width=0.32\textwidth]{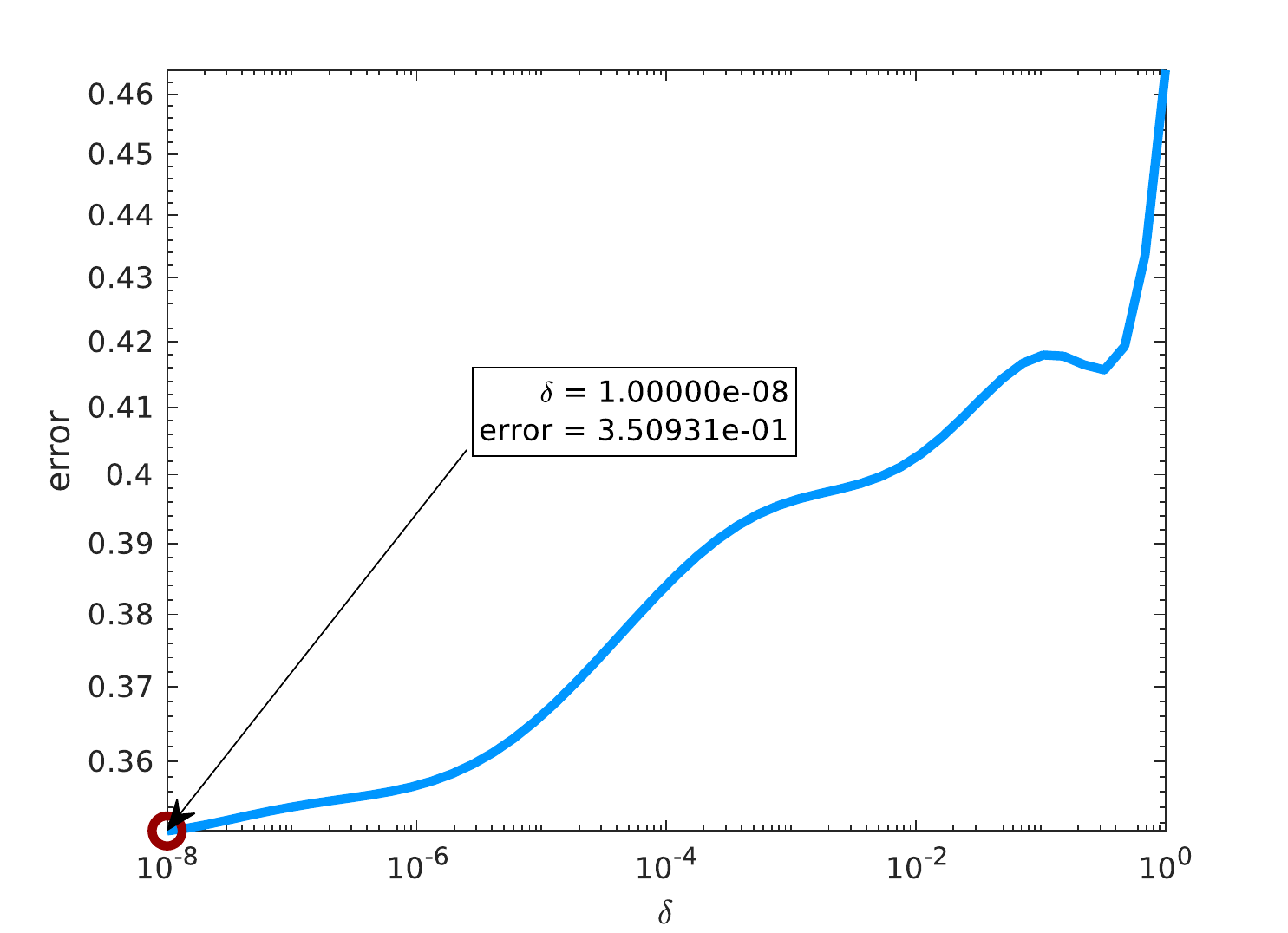}
		\hfill
		\includegraphics[width=0.32\textwidth]{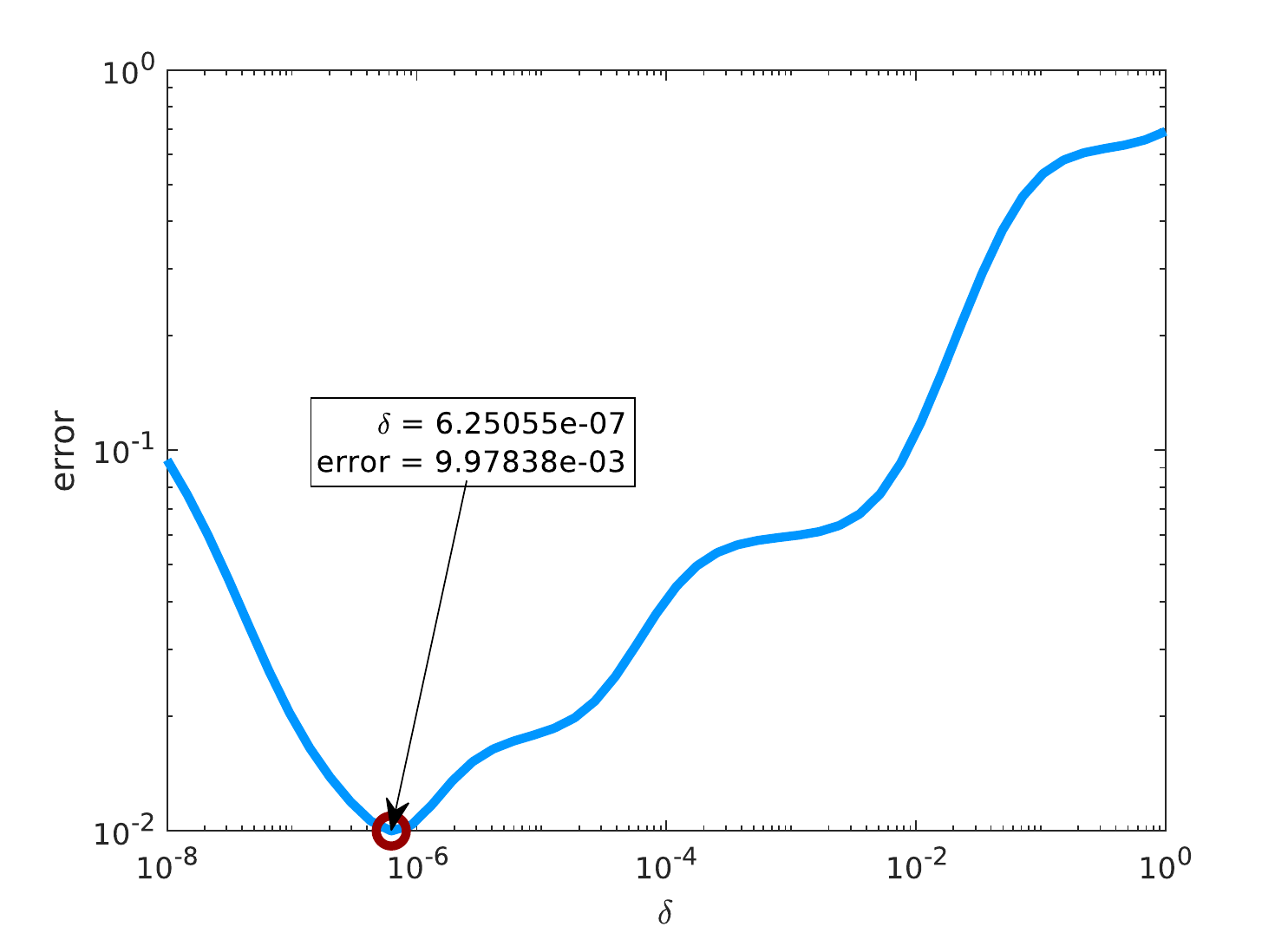}
		\caption{Ring. QR method. Relative error $\frac{\norm{\bfE - \bfE_\delta}{0,\Omega}}{\norm{\bfE}{0,\Omega}}$ with respect to the regularization parameter $\delta$. Left: configuration GExt. Middle: configuration G34. Right: configuration GE37.}
		\label{fig:err_delta_ring}
	\end{figure}

	\begin{table}[hbt]
		\centering
		\caption{Ring. QR method. Errors.}
		\label{tab:err_qr_ring}

		\begin{tabular}{ccccc}
			\toprule
			Configuration & $\frac{\norm{\bfE - \bfE_\delta}{0,\Omega}}{\norm{\bfE}{0,\Omega}}$  & $\frac{\norm{(\bfE - \bfE_\delta) \times \bfn}{0,\Gamma_1}}{\norm{\bfE \times \bfn}{0,\Gamma_1}}$ & $\frac{\norm{(\bfE - \bfE_\delta) \times \bfn}{0,\Gamma_\text{i}}}{\norm{\bfE \times \bfn}{0,\Gamma_\text{i}}}$ & $\norm{\bfF_\delta}{0,\Omega}$ \\
			\midrule
			GExt & 3.9423e-04 & 6.6661e-04 & 6.6661e-04 & 1.7964e-04 \\
			G34 & 3.5093e-01  & 6.9598e-01 & 4.8011e-01 & 8.1894e-05 \\
			GE37 & 9.9784e-03  & 3.9068e-02 & 3.8737e-02 & 8.4337e-05 \\
			\bottomrule
		\end{tabular}
	\end{table}

	\begin{figure}[hbt]
		\centering
		\includegraphics[width=0.32\textwidth]{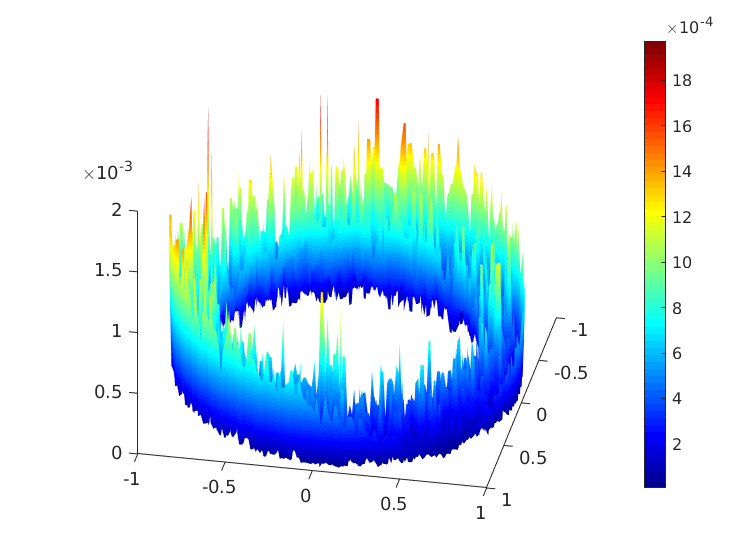}
		\hfill
		\includegraphics[width=0.32\textwidth]{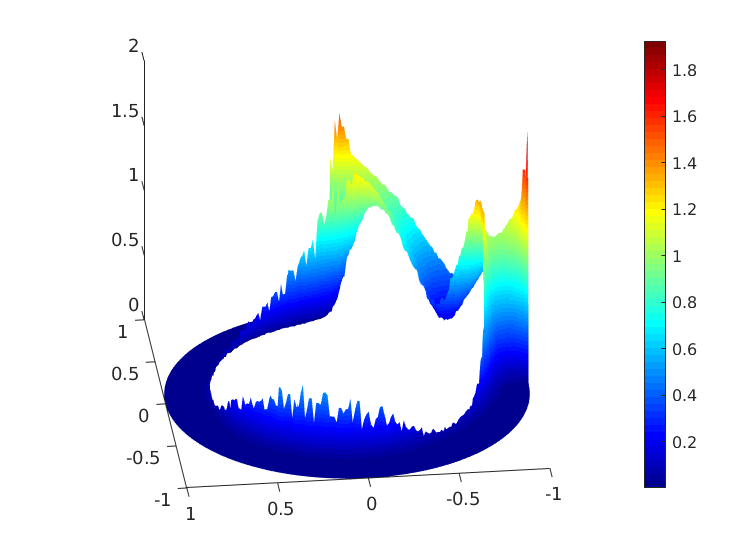}
		\hfill
		\includegraphics[width=0.32\textwidth]{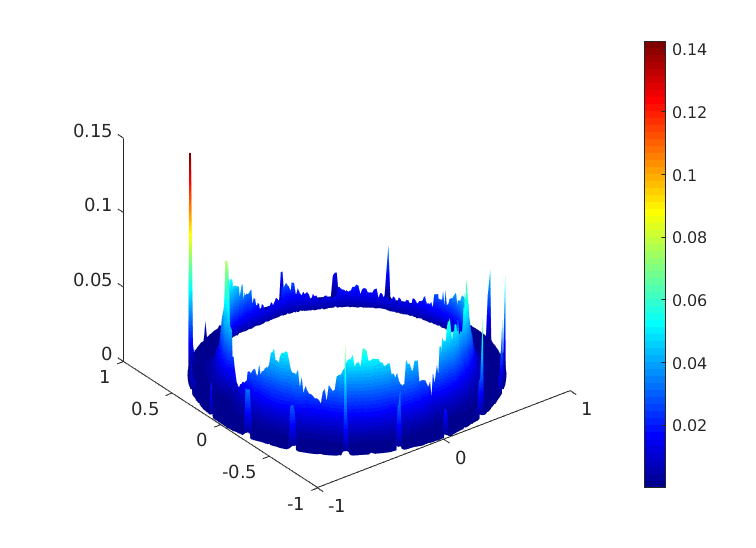}
		\caption{Ring. QR method. Modulus of the error $\abs{\bfE - \bfE_\delta}$. Left: configuration GExt. Middle: configuration G34. Right: configuration GE37.}
		\label{fig:err_qr_ring}
	\end{figure}

	\subsubsection{Extension of the computational domain}

	For similar configurations of the accessible part, the errors obtained on the disc are more accurate than on the ring. This leads us to the idea of extending the computational domain to a full disc $\tilde{\Omega} = \left(\overline{\Omega} \cup \overline{\Omega_\text{int}}\right)^\circ$ before restricting the solution to the initial ring $\Omega$ through the \autoref{algo:extension-restriction}.

	\begin{algorithm}
		\caption{Extension/restriction method}
		\label{algo:extension-restriction}

		\KwIn{$(\bff,\bfg)$ on $\Gamma_0$}
		\KwOut{Solution $({\bfE}_\delta, {\bfF}_\delta)$ of the QR problem on the ring $\Omega$}
		\begin{enumerate}
			\item Extend the coefficients $\eps$ and $\sigma$ defined on the ring $\Omega$ to admissible coefficients $\tilde{\eps}$ and $\tilde{\sigma}$
			defined on $\tilde{\Omega}$.\;
			\item Compute $(\tilde{\bfE}_\delta,\tilde{\bfF}_\delta)$, solution to the QR method on $\tilde{\Omega}$.
			\item Define the fields $\bfE_\delta \coloneqq \tilde{\bfE}_{\delta\vert \Omega}$ and $\bfF_\delta \coloneqq \tilde{\bfF}_{\delta\vert \Omega}$.
		\end{enumerate}
	\end{algorithm}

	From a theoretical point of view, if the data $(\bff,\bfg)$ belong to the Cauchy data set $C(\tilde{\eps},\tilde{\sigma};\Gamma_0)$ with respect to the extended domain $\tilde{\Omega}$, the sequence $(\tilde{\bfE}_\delta,\tilde{\bfF}_\delta)$ converges to $(\tilde{\bfE},0)$ where $\tilde{\bfE}$ is the solution of  the Cauchy problem~\eqref{eq:Cauchy} in $\tilde{\Omega}$ associated with the coefficients $\tilde{\eps}$ and $\tilde{\sigma}$. The restriction $\bfE$ to the ring $\Omega$ satisfies the Cauchy problem with data $(\bff,\bfg)$ on $\Omega$. Thanks to the unique continuation principle, $\bfE$ is the only possible solution and coincides with the limit of the sequence obtained by the QR method applied on $\Omega$.

	\autoref{fig:err_delta_filled_ring} shows the error with respect to $\delta$ obtained by the extension/restriction method for the previous three configurations. The errors obtained with the values of $\delta$ realizing the minima are reported in \autoref{tab:err_qr_filled_ring} and illustrated in \autoref{fig:err_qr_filled_ring}. We notice a significative improvement of the approximation on the inaccessible part: \pc{12.5} instead of \pc{70} on $\Gamma_1$, \pc{0.8} instead of \pc{48} on $\Gamma_{i}$. The drawback lies in the increasing computational cost since the extended domain leads to a larger number of unknowns.

	The numerical results of the previous sections attest the efficiency of the QR method for Maxwell's equations in the case of compatible data belonging to the trace space.

	\begin{remark}
		\label{rem:extension-restriction}
		It is important to notice that the extension/restriction method is limited to the case where the belonging of the data $(\bff,\bfg)$ to the Cauchy set with respect of the extended domain can be guaranteed in addition to the usual assumption that the data belong to the Cauchy set of the original domain.
		In the present simulations, the (synthetic) data on the exterior boundary have been generated by solving the Maxwell equations on the whole disc. The QR-method has then been applied to the ring in order to perform data completion on the interior boundary. Hence, the data belong to the Cauchy set on both the ring and the extended domain.
		In a general context, the question of the choice of the extended domain needs more discussion. While choosing $\tilde{\Omega}$ as the whole disc seems natural, it raises an issue: the Cauchy problem is not guaranteed to admit a solution in this domain.
		However, we can see the extension/restriction method from a different point of view in which the existence of a solution of the Cauchy problem in the extended domain holds. In the practical applications evoked in \autoref{sec:qr-ring} as for instance the cortical mapping, it is realistic to assume that the refractive index is known in a neighborhood of the boundary, denoted by $\tilde{\Omega}$ (typically the scalp layer for medical applications), and also that a solution of the Cauchy problem in the whole disc exists (corresponding to the measurements $({\bf f}, {\bf g})$). However, the electromagnetic coefficients are unknown in the inner part of the disc. Extending the coefficients from the known part to the inner part in an arbitrary way could lead to a situation where the data do no longer belong to the corresponding Cauchy set, except the (unrealistic) situation where the extended coefficients coincide exactly with the unknown	parameters. But in our application, it is sufficient
		 to solve the data completion problem in a ring $\Omega$ which represents only a part of the known neighborhood $\tilde{\Omega}$, and we apply the QR method for this issue. The idea is to
		extend the QR method to the ring $\tilde{\Omega}$, and to restrict the solution to the thinner ring $\Omega$. In this case, a solution of the Cauchy
		problem exists in $\tilde{\Omega}$ and $\Omega$ and the extension/restriction method still yields better results than the classical QR-method on $\tilde{\Omega}$. Such a configuration has been tested successfully in \cite[Section~6.7.3, pp.~162--164]{PHDHeleine19}.
	\end{remark}

	\begin{figure}[hbt]
		\centering
		\includegraphics[width=0.32\textwidth]{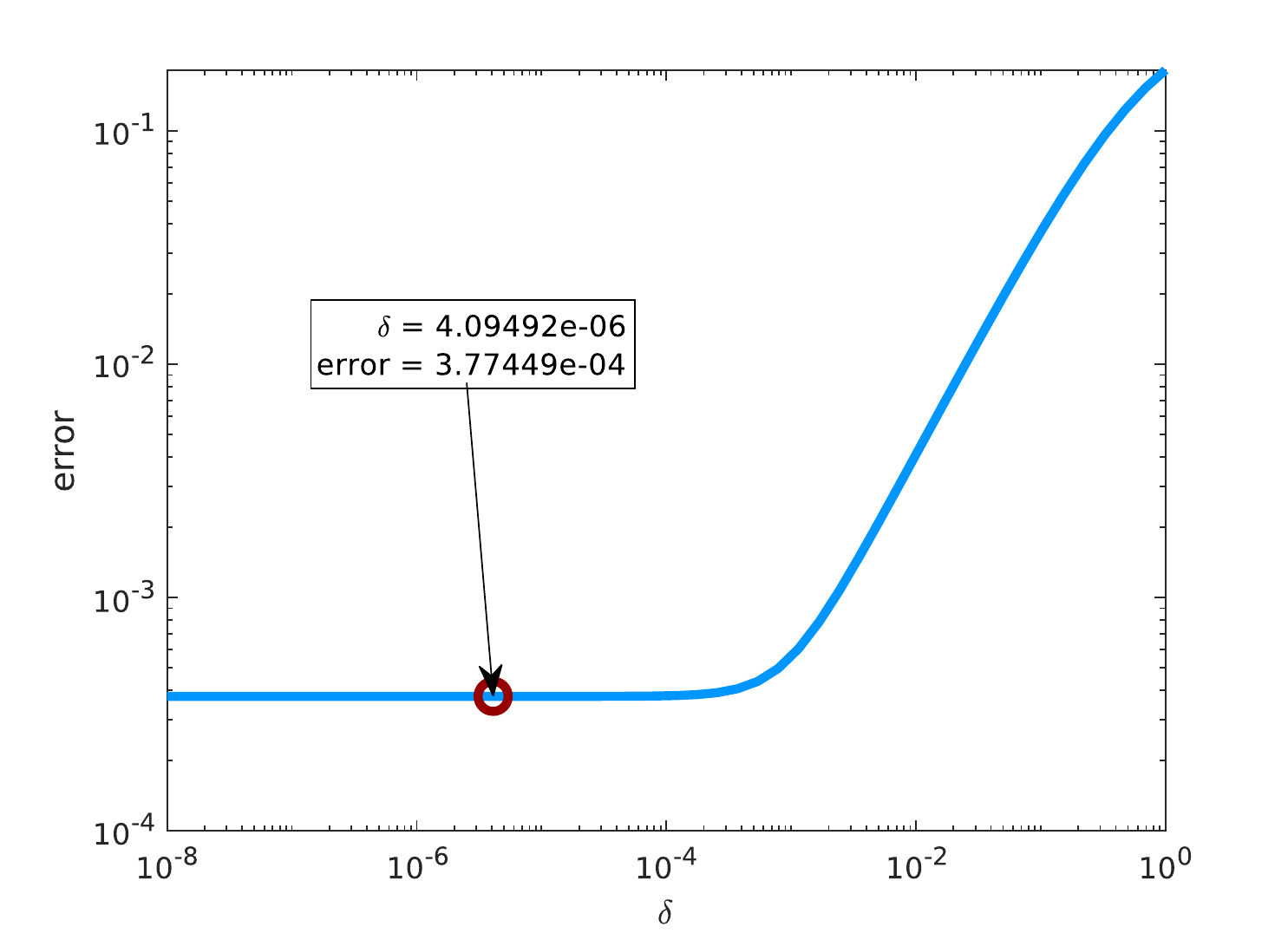}
		\hfill
		\includegraphics[width=0.32\textwidth]{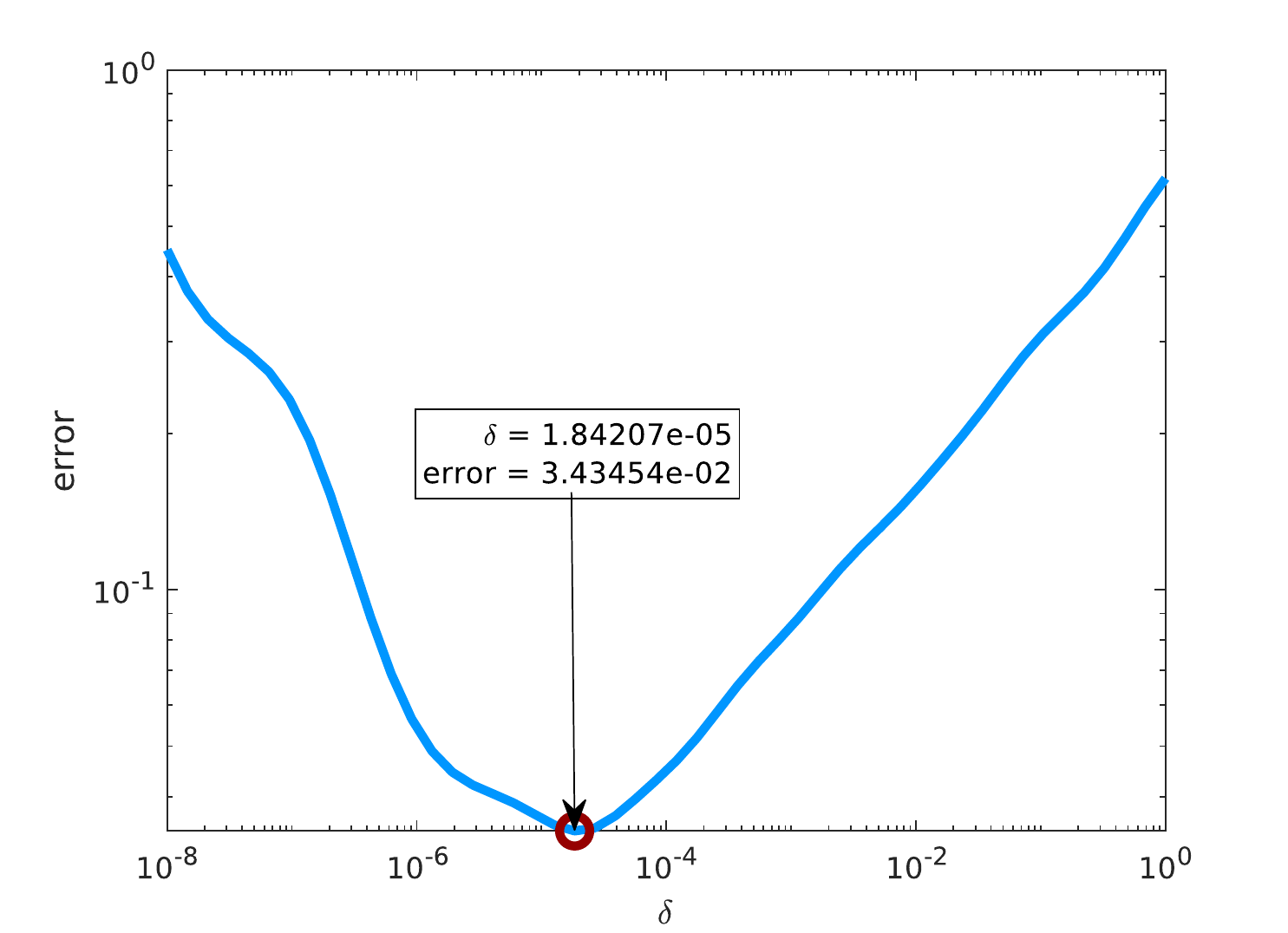}
		\hfill
		\includegraphics[width=0.32\textwidth]{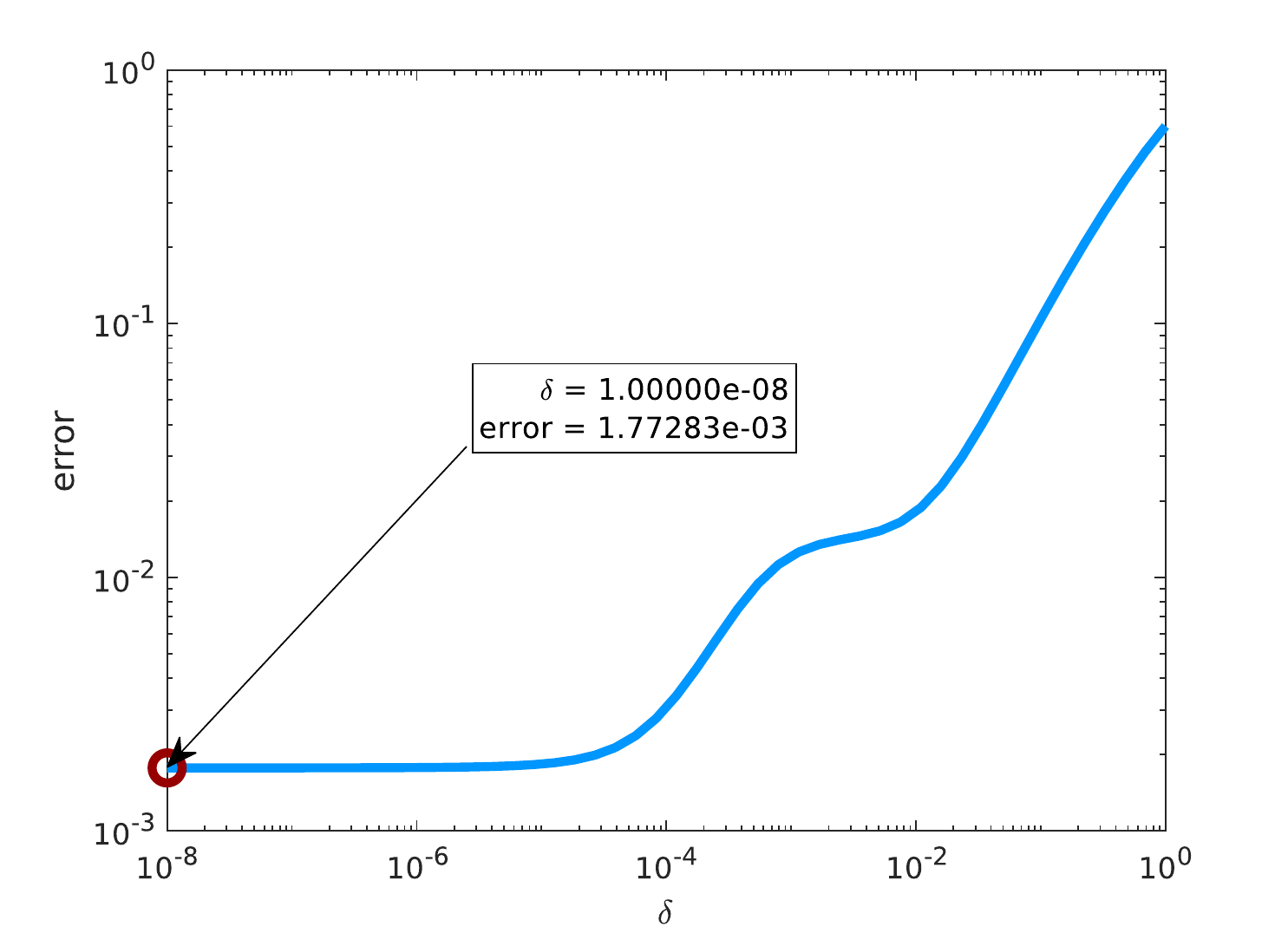}
		\caption{Ring. QR method. Extension/restriction method. Relative error $\frac{\norm{\bfE - \bfE_\delta}{0,\Omega}}{\norm{\bfE}{0,\Omega}}$ with respect to the regularization parameter $\delta$. Left: configuration GExt. Middle: configuration G34. Right: configuration GE37.}
		\label{fig:err_delta_filled_ring}
	\end{figure}

	\begin{table}[hbt]
		\centering
		\caption{Ring. QR method. Extension/restriction method. Errors.}
		\label{tab:err_qr_filled_ring}

		\begin{tabular}{ccccc}
			\toprule
			Configuration & $\frac{\norm{\bfE - \bfE_\delta}{0,\Omega}}{\norm{\bfE}{0,\Omega}}$ & $\frac{\norm{(\bfE - \bfE_\delta) \times \bfn}{0,\Gamma_1}}{\norm{\bfE \times \bfn}{0,\Gamma_1}}$ & $\frac{\norm{(\bfE - \bfE_\delta) \times \bfn}{0,\Gamma_\text{i}}}{\norm{\bfE \times \bfn}{0,\Gamma_\text{i}}}$ & $\norm{\bfF_\delta}{0,\Omega}$ \\
			\midrule
			GExt & 3.7745e-04  & 1.5665e-04 & 1.5665e-04 & 3.1125e-04 \\
			G34 & 3.4345e-02  & 1.2558e-01 & 8.1477e-03 & 2.6564e-04 \\
			GE37 & 1.7728e-03  & 1.1968e-02 & 2.9520e-04 & 9.0131e-05 \\
			\bottomrule
		\end{tabular}
	\end{table}

	\begin{figure}[hbt]
		\centering
		\includegraphics[width=0.32\textwidth]{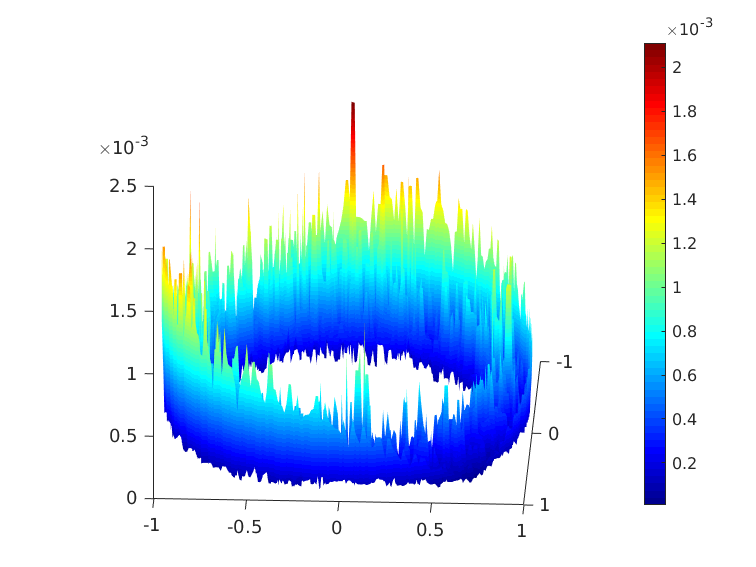}
		\hfill
		\includegraphics[width=0.32\textwidth]{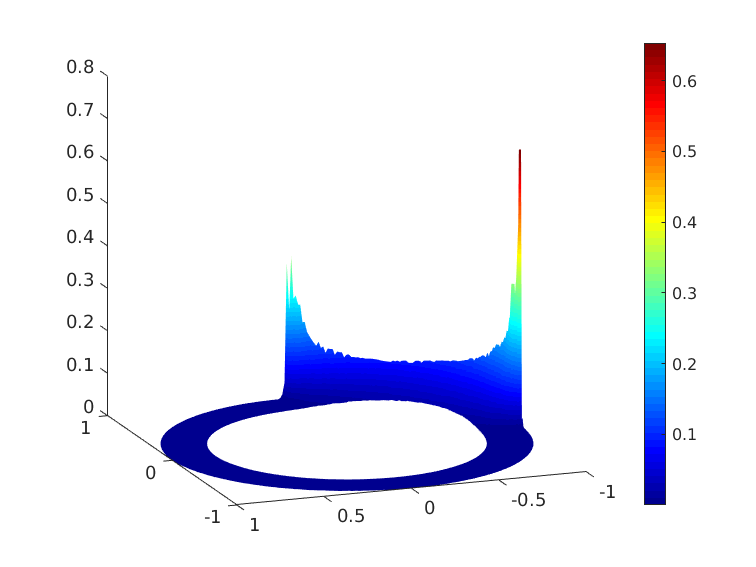}
		\hfill
		\includegraphics[width=0.32\textwidth]{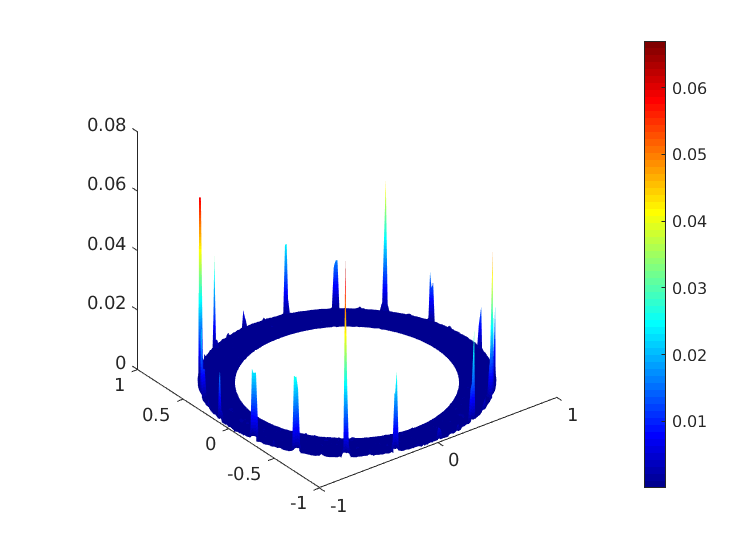}
		\caption{Ring. QR method. Extension/restriction method. Modulus of the error $\abs{\bfE - \bfE_\delta}$.  Left: configuration GExt. Middle: configuration G34. Right: configuration GE37.}
		\label{fig:err_qr_filled_ring}
	\end{figure}

	\section{Quasi-reversibility formulations for noisy data}
	\label{sec:RRQR}

	Noisy data will, in general, not belong to the trace space $Y(\Gamma_0)$. Consequently, problem \eqref{eq:qr} is not well defined since no lifting exists for the data $\bff$. The boundary condition $\gamma_t(\bfE) = \bff$ has thus to be enforced weakly. Moreover, if $\bfg\not\in Y(\Gamma_0)$, the definition of the linear form $\ell(\cdot)$ on the vector space $M$ is no longer straightforward. To overcome these difficulties, we modify in this section the basic vector space and assume in the sequel that the boundary data $(\bff,\bfg)$ belong to $L^2(\Gamma_0)^3 \times L^2(\Gamma_0)^3$. Notice that this assumption does not imply that the data belong to the trace space.

	\subsection{A first version}
	\label{ss-relax1}

	Consider the vector space
	\begin{equation}
		\label{eq:V}
		V = \set*{\bfv \in \Hcurl[\Omega]}{\gamma_t(\bfv) \in L^2(\Gamma_0)^3}
	\end{equation}
	with the norm
	\begin{equation}
		\label{eq:normV}
		\norm{\bfv}{V} = \left(\norm{\bfv}{\Hcurl[\Omega]}^2 + \norm{\gamma_t(\bfv)}{0,\Gamma_0}^2\right)^{1/2}.
	\end{equation}
	The space $M$ will now be defined as the following subspace of $V$,
	\begin{equation}
		\label{eq:M}
		M = \set{\bfv \in V}{\gamma_t(\bfv) = 0 \stext{on} \Gamma_1},
	\end{equation}
	equipped with the norm of $V$, $\norm{}{V}$. Since we cannot assume that there is a lifting of the boundary data $\bff$ in $V$, the boundary condition will be imposed weakly through a penalization term. The relaxed version of the mixed problem then reads
	\begin{equation}
		\label{eq:qr-relaxed}
		\left\{
		\begin{array}{rcl@{\hspace{4\tabcolsep}}l}
			\multicolumn{4}{l}{\stext[r]{Find} (\bfE_\alpha,\bfF_\alpha) \in V \times M  \stext[l]{such that}} \\
			\multicolumn{1}{l}{\delta\dotprod{\bfE_\alpha}{\bfphi}{V} + \eta^2\dotprod{\gamma_t(\bfE_\alpha)}{\gamma_t(\bfphi)}{0,\Gamma_0}} & & & \multirow{2}{*}{$\forall \bfphi \in V,$} \\
			+ a(\bfphi,\bfF_\alpha) &=& \eta^2\dotprod{\bff}{\gamma_t(\bfphi)}{0,\Gamma_0}, & \\
			a(\bfE_\alpha,\bfpsi) - \dotprod{\bfF_\alpha}{\bfpsi}{V} &=& \ell(\bfpsi), & \forall \bfpsi \in M,
		\end{array}
		\right.
	\end{equation}
	where the subscript $\alpha = (\delta,\eta)$ indicates that the solution of \eqref{eq:qr-relaxed} depends on the regularization parameter $\delta > 0$ and the relaxation parameter $\eta > 0$.

	\begin{theorem}
		\label{thm:convergence-relaxed}
		Let $(\bff,\bfg)\in L^2(\Gamma_0) \times L^2(\Gamma_0)$. For any $\alpha = (\delta,\eta)$ such that $\delta > 0$ and $\eta > 0$, problem~\eqref{eq:qr-relaxed} admits a unique solution $(\bfE_\alpha,\bfF_\alpha) \in V \times M$.

		If, in addition, $(\bff,\bfg)$ belongs to the Cauchy data set $C(\eps, \sigma; \Gamma_0)$, then
		\begin{equation}
			\label{eq:convergence-relaxed}
			\lim_{\delta \to 0} (\bfE_\alpha,\bfF_\alpha) = (\bfE, 0)
		\end{equation}
		in $V \times M$ for any fixed $\eta > 0$. Here, $\bfE$ is the unique solution of the Cauchy problem \eqref{eq:Cauchy}.
		The following estimates hold true
		\begin{align}
			\label{eq:estim-relaxed-1}
			\norm{\bfF_\alpha}{V} &\leq \sqrt{\delta}\norm{\bfE}{V}\ \forall \eta > 0, \\
			\label{eq:estim-relaxed-2}
			\norm{\gamma_t\bfE_\alpha - \bff}{0,\Gamma_0} &\leq \frac{\sqrt{\delta}}{\eta} \norm{\bfE}{V}.
		\end{align}
	\end{theorem}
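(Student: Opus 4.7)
The plan is to mirror the argument of \autoref{thm:convergence}, adapted to the relaxed and penalised setting. For well-posedness, I would first recast \eqref{eq:qr-relaxed} as a single variational problem on $V\times M$ by subtracting the second equation from the first, producing a sesquilinear form
\[
A_\alpha((\bfE,\bfF),(\bfphi,\bfpsi)) = \delta(\bfE,\bfphi)_V+\eta^2(\gamma_t\bfE,\gamma_t\bfphi)_{0,\Gamma_0}+a(\bfphi,\bfF)-a(\bfE,\bfpsi)+(\bfF,\bfpsi)_V
\]
whose diagonal value is $\delta\|\bfphi\|_V^2+\eta^2\|\gamma_t\bfphi\|_{0,\Gamma_0}^2+\|\bfpsi\|_V^2$ since the indefinite $a$-terms cancel. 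This is coercive on $V\times M$ with constant $\min(\delta,1)$, and continuity is immediate because $\gamma_t\colon V\to L^2(\Gamma_0)^3$ is bounded by construction of $\|\cdot\|_V$, so Lax-Milgram (equivalently Theorem~2.4 of \cite{BR18}) gives existence and uniqueness.

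For the explicit estimates, \autoref{p:weakCauchy} provides a field $\bfE\in V_\bff$ solving the Cauchy problem, which lies in $V$ since $\gamma_t\bfE=\bff\in L^2(\Gamma_0)^3$, and which satisfies $a(\bfE,\bfpsi)=\ell(\bfpsi)$ for every $\bfpsi\in M$. Substituting $\ell(\bfpsi)=a(\bfE,\bfpsi)$ into the second equation of \eqref{eq:qr-relaxed} gives $a(\bfE_\alpha-\bfE,\bfpsi)=(\bfF_\alpha,\bfpsi)_V$, and testing with $\bfpsi=\bfF_\alpha$ yields $a(\bfE_\alpha-\bfE,\bfF_\alpha)=\|\bfF_\alpha\|_V^2$. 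Testing the first equation with $\bfphi=\bfE_\alpha-\bfE\in V$ and exploiting $\gamma_t\bfE=\bff$ to simplify the right-hand side then produces the fundamental identity
\[
\delta(\bfE_\alpha,\bfE_\alpha-\bfE)_V+\eta^2\|\gamma_t(\bfE_\alpha-\bfE)\|_{0,\Gamma_0}^2+\|\bfF_\alpha\|_V^2=0.
\]
Taking the real part, writing $\operatorname{Re}(\bfE_\alpha,\bfE_\alpha-\bfE)_V=\|\bfE_\alpha\|_V^2-\operatorname{Re}(\bfE_\alpha,\bfE)_V$ and applying Cauchy-Schwarz first delivers $\|\bfE_\alpha\|_V\le\|\bfE\|_V$; feeding this back into the identity gives $\eta^2\|\gamma_t(\bfE_\alpha-\bfE)\|_{0,\Gamma_0}^2+\|\bfF_\alpha\|_V^2\le\delta\|\bfE\|_V^2$, from which \eqref{eq:estim-relaxed-1} and \eqref{eq:estim-relaxed-2} follow at once.

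For the limit $\delta\to 0$ with $\eta>0$ fixed, the uniform bound $\|\bfE_\alpha\|_V\le\|\bfE\|_V$ furnishes a subsequence with $\bfE_\alpha\rightharpoonup\bfE^*$ weakly in $V$; \eqref{eq:estim-relaxed-1} forces $\bfF_\alpha\to 0$ strongly in $V$ and \eqref{eq:estim-relaxed-2} forces $\gamma_t\bfE_\alpha\to\bff$ in $L^2(\Gamma_0)^3$, so passing to the limit in the second equation and in the trace gives $\bfE^*\in V_\bff$ with $a(\bfE^*,\bfpsi)=\ell(\bfpsi)$ for all $\bfpsi\in M$. By \autoref{p:weakCauchy} and \autoref{lem:holmgren} one obtains $\bfE^*=\bfE$, so the whole sequence converges weakly to $\bfE$. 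Strong convergence then comes from taking $\limsup$ in the identity: after division by $\delta>0$ it reads $\|\bfE_\alpha\|_V^2\le\operatorname{Re}(\bfE_\alpha,\bfE)_V$, and weak convergence on the right yields $\limsup\|\bfE_\alpha\|_V^2\le\|\bfE\|_V^2$, which combined with weak lower semi-continuity of the norm gives $\|\bfE_\alpha\|_V\to\|\bfE\|_V$ and thus strong convergence in the Hilbert space $V$. The main obstacle is the design of the combined form so that the indefinite, non-Hermitian piece $a(\cdot,\cdot)$ is annihilated on the diagonal, leaving only the coercive regularization, relaxation and dual-unknown terms; the very same algebraic cancellation is what closes the a priori estimates, so well-posedness, estimates, and convergence all rest on a single structural identity.
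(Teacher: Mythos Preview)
Your proof is correct and follows essentially the same route as the paper: the same closed sesquilinear form with coercivity constant $\min(\delta,1)$, the same fundamental identity obtained by testing with $(\bfE_\alpha-\bfE,\bfF_\alpha)$, and the same passage from boundedness to weak convergence and then to strong convergence. The only cosmetic difference is that you phrase the final step as ``weak convergence plus norm convergence implies strong convergence'' whereas the paper bounds $\norm{\bfE_\alpha-\bfE}{V}^2$ directly by $-\real{\dotprod{\bfE}{\bfE_\alpha-\bfE}{V}}$; both rest on the same inequality $\norm{\bfE_\alpha}{V}^2\le\real{\dotprod{\bfE_\alpha}{\bfE}{V}}$ extracted from the identity.
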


	The estimates~\eqref{eq:estim-relaxed-1} and \eqref{eq:estim-relaxed-2} in \autoref{thm:convergence-relaxed} give further insight in the behavior of the sequence $(\bfE_\alpha,\bfF_\alpha)$. Indeed, \eqref{eq:estim-relaxed-1} yields a convergence order for the convergence of $\bfF_\alpha$ to $0$ independently from the choice of $\eta$. Estimate \eqref{eq:estim-relaxed-2} suggests that we should choose $\eta$ such that $\frac{\sqrt{\delta}}{\eta}\to 0$. Large values of $\eta$ should accelerate the convergence of the trace of $\bfE_\alpha$ on $\Gamma_0$.

	\begin{proof}
		Problem~\eqref{eq:qr-relaxed} can be written in the following closed form,
		\[
			\left\{
			\begin{array}{l}
				\stext[r]{Find} (\bfE_\alpha,\bfF_\alpha) \in V \times M \stext[l]{such that} \\
				A_\alpha\left((\bfE_\alpha,\bfF_\alpha), (\bfphi,\bfpsi)\right) = L_\alpha\left((\bfphi,\bfpsi)\right)\ \forall (\bfphi,\bfpsi) \in V \times M,
			\end{array}
			\right.
		\]
		where the sesqui-linear form $A_\alpha(\cdot,\cdot)$ and the linear form $L_\alpha(\cdot)$ are defined on $V \times M$ by
		\[
			A_\alpha\left((\bfu,\bfv), (\bfphi,\bfpsi)\right) = \delta\dotprod{\bfu}{\bfphi}{V} + \eta^2\dotprod{\gamma_t(\bfu)}{\gamma_t(\bfphi)}{0,\Gamma_0} + a(\bfphi,\bfv) - a(\bfu,\bfpsi) + \dotprod{\bfv}{\bfpsi}{M}
		\]
		and
		\[
			L_\alpha\left((\bfphi,\bfpsi)\right) = \eta^2\dotprod{\bff}{\gamma_t(\bfphi)}{0,\Gamma_0} - \ell(\bfpsi).
		\]
		The continuity of $A_\alpha(\cdot,\cdot)$ and $L_\alpha(\cdot)$ is obvious, and coercivity follows since
		\[
			A_\alpha\left((\bfu,\bfv),(\bfu,\bfv)\right) = \delta\norm{\bfu}{V}^2 + \eta^2\norm{\gamma_t(\bfu)}{0,\Gamma_0}^2 + \norm{\bfv}{V}^2 \geq \min(\delta,1) \norm{(\bfu,\bfv)}{V \times V}^2
		\]
		taking into account that the scalar product in $M$ is the one of $V$. We thus can apply Lax-Milgram's Theorem to prove existence and uniqueness of a solution of problem~\eqref{eq:qr-relaxed}.

		According to \autoref{p:weakCauchy} and since the new space $M$ is a subspace of the former, the solution $\bfE$ of the Cauchy problem satisfies
		\[a(\bfE,\bfpsi) = \ell(\bfpsi)\ \forall \bfpsi \in M.\]
		Together with the second equation of problem~\eqref{eq:qr-relaxed}, we get
		\begin{equation}
			\label{eq:tmp1}
			a(\bfE_\alpha - \bfE,\bfpsi) - \dotprod{\bfF_\alpha}{\bfpsi}{V} = 0\ \forall \bfpsi \in M.
		\end{equation}
		Now, take $\bfphi = \bfE_\alpha - \bfE$ in problem~\eqref{eq:qr-relaxed} and $\bfpsi = \bfF_\alpha$ in \eqref{eq:tmp1} and substract the latter from the first one. Taking into account that $\gamma_t(\bfE) = \bff$, this yields the fundamental relation
		\begin{equation}
			\label{eq:tmp2}
			\delta\dotprod{\bfE_\alpha}{\bfE_\alpha - \bfE}{V} + \eta^2\norm{\gamma_t(\bfE_\alpha) - \bff}{0,\Gamma_0}^2 + \norm{\bfF_\alpha}{V}^2 = 0
		\end{equation}
		which implies
		\begin{equation}
			\label{eq:tmp2bis}
			\delta\norm{\bfE_\alpha}{V}^2 + \eta^2\norm{\gamma_t(\bfE_\alpha) - \bff}{0,\Gamma_0}^2 + \norm{\bfF_\alpha}{V}^2 \leq \delta\norm{\bfE_\alpha}{V}\norm{\bfE}{V}.
		\end{equation}
		From \eqref{eq:tmp2bis}, we see that $\norm{\bfE_\alpha}{V} \leq \norm{\bfE}{V}$, i.e. the sequence $(\bfE_\alpha)_\alpha$ is bounded with respect to the two parameters $\delta$ and $\eta$. We then deduce from \eqref{eq:tmp2bis} estimation~\eqref{eq:estim-relaxed-1},
		\[\norm{\bfF_\alpha}{V} \leq \sqrt{\delta}\norm{\bfE}{V},\]
		and thus the convergence of $(\bfF_\alpha)_\alpha$ to $0$ whenever the regularization parameter $\delta$ tends to $0$.

		The boundary term can be estimated in a similar way,
		\[\norm{\gamma_t(\bfE_\alpha) - \bff}{0,\Gamma_0} \leq \frac{\sqrt{\delta}}{\eta}\norm{\bfE}{V},\]
		which yields estimation~\eqref{eq:estim-relaxed-2}. In particular, the sequence $(\gamma_t(\bfE_\alpha))_\alpha$ tends to $\bff$ if $\lim\limits_{\delta \to 0} \left(\frac{\sqrt{\delta}}{\eta}\right) = 0 $ which is, for example, the case for any fixed $\eta > 0$.

		It remains to prove the convergence of the sequence $(\bfE_\alpha)_\alpha$. Recall that the sequence is bounded in $V$ which is a Hilbert space. Therefore, there is a subsequence of $(\bfE_\alpha)_\alpha$ that converges weakly in $V$ to a limit field $\tilde{\bfE}$.

		Passing to the limit in the second equation of \eqref{eq:qr-relaxed} yields
		\[a(\tilde{\bfE},\bfpsi) = \ell(\bfpsi)\ \forall \bfpsi \in M\]
		if $\delta \to 0$. Moreover, we get on the one hand
		\[\dotprod{\gamma_t(\bfE_\alpha)}{\xi}{0,\Gamma_0} \to \dotprod{\gamma_t(\tilde{\bfE})}{\xi}{0,\Gamma_0}\ \forall \xi \in L^2(\Gamma_0)^3,\ \xi \cdot \bfn = 0 \stext{on} \Gamma_0\]
		from the weak convergence of $(\bfE_\alpha)_\alpha$ in $V$ and the density of $Y(\Gamma_0) \cap L^2(\Gamma_0)^3$ in the subspace of tangential fields of $L^2(\Gamma_0)^3$, and on the other
		\[\gamma_t(\bfE_\alpha) \to \bff\]
		strongly in $L^2(\Gamma_0)^3$ according to \eqref{eq:estim-relaxed-2} and the assumptions on the parameter set $\alpha$. Consequently, the limit field $\tilde{\bfE}$ satisfies $\gamma_t(\tilde{\bfE}) = \bff$ and is a solution of the weak Cauchy problem. The uniqueness of this solution yields $\tilde{\bfE} = \bfE$.

		We then deduce from \eqref{eq:tmp2} that
		\begin{align*}
			\norm{\bfE_\alpha - \bfE}{V}^2 &= \real{\dotprod{\bfE_\alpha}{\bfE_\alpha - \bfE}{V} - \dotprod{\bfE}{\bfE_\alpha - \bfE}{V}} \\
			&= -\frac{\eta^2}{\delta}\norm{\gamma_t(\bfE_\alpha-\bfE)}{0,\Gamma_0}^2 - \frac{1}{\delta}\norm{\bfF_\alpha}{V}^2 - \real{\dotprod{\bfE}{\bfE_\alpha - \bfE}{V}} \\
			&\leq - \real{\dotprod{\bfE}{\bfE_\alpha}{V}} + \norm{\bfE}{V}^2.
		\end{align*}
		Since $(\bfE_\alpha)_\alpha$ converges weakly to $\bfE$, the above inequality implies the strong convergence, at least for a subsequence. It follows from a standard argument that the whole sequence converges strongly to $\bfE$ which completes the proof.
	\end{proof}

	\begin{remark}
		Under the regularity assumptions of \autoref{p:discretization}, we have an a priori error estimate of order $\frac{h^s}{\min(\delta,1)}$ since the coercivity constant of the bilinear form $A_\alpha(\cdot,\cdot)$ is still given by $\min(\delta,1)$ and the additional boundary term in the $V$-norm can be estimated with help of the continuity of the trace operator.
	\end{remark}

	\subsection{A second version}
	\label{ss-relax2}

	The choice of the space $V$ in the preceding section has been motivated by the penalization of the boundary condition $\bfE \times \bfn = \bff$ which can no longer be imposed strongly if the data do not belong to the trace space $Y(\Gamma_0)$. One may ask however if it is judicious to restrict the belonging of the fields trace to $L^2$ on the accessible part $\Gamma_0$ only. It seems thus natural to investigate another choice for the basic vector space: let
	\begin{equation}
		\label{eq:W}
		W \coloneqq \set*{\bfv \in \Hcurl[\Omega]}{\gamma_t(\bfv) \in L^2(\Gamma)^3}
	\end{equation}
	with the norm
	\begin{equation}
		\label{eq:normW}
		\norm{\bfv}{W} = \left(\norm{\bfv}{\Hcurl[\Omega]}^2 + \norm{\gamma_t(\bfv)}{0,\Gamma}^2\right)^{1/2}.
	\end{equation}
	Notice that the space $M$ defined in \eqref{eq:M} keeps unchanged since the boundary condition $\bfv \times \bfn = 0$ on $\Gamma_1$ implies, together with the condition $\bfv \in V$ that $M \subset W$ and $\norm{\bfv}{V} = \norm{\bfv}{W}$ for any field in $M$.

	In order to obtain the associated relaxed formulation, we have several choices. We obviously could just replace the space $V$ by $W$ in the formulation of problem in \eqref{eq:qr-relaxed}. The existence and uniqueness of a solution to the mixed problem on $W$ can be proved in the same way as in the proof of \autoref{thm:convergence-relaxed}. A slight modification occurs in the proof of the convergence of the sequence $(\bfE_\alpha,\bfF_\alpha)$. Indeed, this requires that the Cauchy problem \eqref{eq:Cauchy} admits a solution in the modified vector space $W$ and implies more regularity of the limit field $\bfE$ on $\Gamma_1$. The rest of the proof keeps unchanged.

	From a numerical point of view, it seems however appealing to introduce a new parameter $\nu > 0$ that acts as a regularization parameter on the inaccessible part $\Gamma_1$ of the boundary. An appropriate \textit{"tuning"} of the parameters should allow to improve the numerical results.

	In view of the latter remark, we define the relaxed mixed problem with regularization on both the accessible and inaccessible parts as follows:

	\begin{equation}
		\label{eq:qr-relaxed-reg}
		\left\{
		\begin{array}{rcl@{\hspace{4\tabcolsep}}l}
			\multicolumn{4}{l}{\stext[r]{Find} (\bfE_\beta,\bfF_\beta) \in W \times M \stext[l]{such that}} \\
			\multicolumn{1}{l}{\delta\dotprod{\bfE_\beta}{\bfphi}{V} + \nu\dotprod{\gamma_t(\bfE_\beta)}{\gamma_t(\bfphi)}{0,\Gamma_1}} & & & \multirow{2}{*}{$\forall \bfphi \in W,$} \\
			\qquad + \eta^2\dotprod{\gamma_t(\bfE_\beta)}{\gamma_t(\bfphi)}{0,\Gamma_0} + a(\bfphi,\bfF_\beta) &=& \eta^2\dotprod{\bff}{\gamma_t(\bfphi)}{0,\Gamma_0}, & \\
			a(\bfE_\beta,\bfpsi) - \dotprod{\bfF_\beta}{\bfpsi}{W} &=& \ell(\bfpsi),  &\forall \bfpsi \in M,
		\end{array}
		\right.
	\end{equation}
	where the subscript $\beta = (\delta,\nu,\eta)$ indicates that the solution of \eqref{eq:qr-relaxed} depends on the regularization parameters $\delta > 0$ and $\nu > 0$ as well as on the relaxation parameter $\eta > 0$.
	To our knowledge, this relaxed and regularized mixed version of the QR method has never been proposed in the previous studies of the method.

	Notice that the first two terms in the first equation are well defined according to the definition of the space $W$, but that the weight of the different parts of the norm can be chosen independently.

	\begin{theorem}
		\label{thm:convergence-relaxed-reg}
		Let $(\bff,\bfg) \in L^2(\Gamma_0)^3 \times L^2(\Gamma_0)^3$. For any $\beta = (\delta,\nu,\eta)$ such that $\delta > 0$, $\nu > 0$ and $\eta > 0$, problem~\eqref{eq:qr-relaxed-reg} admits a unique solution $(\bfE_\beta,\bfF_\beta) \in W \times M$.

		If, in addition, $(\bff,\bfg)\in C(\eps, \sigma; \Gamma_0)$ and if the corresponding solution $\bfE$ of the Cauchy problem~\eqref{eq:Cauchy} belongs to the space $W$, then for any fixed $\eta > 0$,
		\begin{equation}
			\label{eq:convergence-relaxed-reg}
			\lim_{(\delta,\nu) \to 0} (\bfE_\beta,\bfF_\beta) = (\bfE, 0)
		\end{equation}
		in $W \times M$ whenever the parameters $\delta$ and $\nu$ satisfy the relation
		\begin{equation}
			\label{condition:nu}
			\nu \coloneqq \nu(\delta) = \delta + o(\delta)
		\end{equation}
		such that the ratio $\delta/\nu$ tends to $1$ as $\delta\to 0$.
		The following estimates hold true
		\begin{align}
			\label{eq:estim-relaxed-reg-1}
			\norm{\bfF_\beta}{W} &\leq \sqrt{\max(\delta,\nu)C(\delta,\nu)}\norm{\bfE}{W} \\
			\label{eq:estim-relaxed-reg-2}
			\norm{\gamma_t(\bfE_\beta) - \bff}{0,\Gamma_0} &\leq \frac{\sqrt{\max(\delta,\nu)C(\delta,\nu)}}{\eta} \norm{\bfE}{W}.
		\end{align}
		where $C(\delta,\nu) \coloneqq \dfrac{\max(\delta,\nu)}{\min(\delta,\nu)}$.
	\end{theorem}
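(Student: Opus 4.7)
The plan is to mirror the proof of \autoref{thm:convergence-relaxed}, tracking carefully how the new inaccessible-boundary penalization weighted by $\nu$ and the change of basic space from $V$ to $W$ modify each step. For well-posedness I would first rewrite~\eqref{eq:qr-relaxed-reg} in closed form on $W\times M$ with a sesqui-linear form $A_\beta(\cdot,\cdot)$ obtained by combining the two equations, and a corresponding linear form $L_\beta(\cdot)$. Continuity on $W\times M$ is immediate from the definition of $\norm{\cdot}{W}$, and the diagonal evaluation gives
\[
A_\beta((\bfu,\bfv),(\bfu,\bfv)) = \delta\norm{\bfu}{V}^2 + \nu\norm{\gamma_t(\bfu)}{0,\Gamma_1}^2 + \eta^2\norm{\gamma_t(\bfu)}{0,\Gamma_0}^2 + \norm{\bfv}{W}^2 \geq \min(\delta,\nu,1)\,\norm{(\bfu,\bfv)}{W\times W}^2,
\]
so existence and uniqueness follow from Lax--Milgram.

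Now assume $(\bff,\bfg)\in C(\eps,\sigma;\Gamma_0)$ and $\bfE\in W$. By \autoref{p:weakCauchy} and $M\subset W$, $a(\bfE,\bfpsi)=\ell(\bfpsi)$ for every $\bfpsi\in M$, which combined with the second equation of~\eqref{eq:qr-relaxed-reg} gives $a(\bfE_\beta-\bfE,\bfpsi) = \dotprod{\bfF_\beta}{\bfpsi}{W}$. I then test the first equation with $\bfphi = \bfE_\beta - \bfE \in W$ (this is where the assumption $\bfE\in W$ is crucial) and the above relation with $\bfpsi = \bfF_\beta$; subtracting the two and using $\gamma_t(\bfE) = \bff$ on $\Gamma_0$, the $a$-terms cancel and I obtain the fundamental energy identity
\[
\delta\,\dotprod{\bfE_\beta}{\bfE_\beta-\bfE}{V} + \nu\,\dotprod{\gamma_t(\bfE_\beta)}{\gamma_t(\bfE_\beta-\bfE)}{0,\Gamma_1} + \eta^2\norm{\gamma_t(\bfE_\beta)-\bff}{0,\Gamma_0}^2 + \norm{\bfF_\beta}{W}^2 = 0.
\]
Taking real parts and exploiting $\norm{\bfu}{V}^2 + \norm{\gamma_t(\bfu)}{0,\Gamma_1}^2 = \norm{\bfu}{W}^2$, the positive contribution on the left is bounded below by $\min(\delta,\nu)\norm{\bfE_\beta}{W}^2$, whereas Cauchy--Schwarz controls the cross terms from above by $\max(\delta,\nu)\norm{\bfE_\beta}{W}\norm{\bfE}{W}$, yielding $\norm{\bfE_\beta}{W}\leq C(\delta,\nu)\norm{\bfE}{W}$. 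Reinjecting this bound into the energy identity controls the cross terms by $\max(\delta,\nu)\,C(\delta,\nu)\norm{\bfE}{W}^2$, which immediately gives~\eqref{eq:estim-relaxed-reg-1} and~\eqref{eq:estim-relaxed-reg-2}.

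Under condition~\eqref{condition:nu}, $C(\delta,\nu)\to 1$ so the a priori bound is uniform, $(\bfE_\beta)$ is bounded in the Hilbert space $W$, and I extract a weakly convergent subsequence $\bfE_\beta \rightharpoonup \tilde\bfE$. Passing to the limit in the second equation, using $\norm{\bfF_\beta}{W}\to 0$ from~\eqref{eq:estim-relaxed-reg-1}, gives $a(\tilde\bfE,\bfpsi)=\ell(\bfpsi)$ for all $\bfpsi\in M$. Estimate~\eqref{eq:estim-relaxed-reg-2} forces $\gamma_t(\bfE_\beta)\to\bff$ strongly in $L^2(\Gamma_0)^3$, hence $\gamma_t(\tilde\bfE)=\bff$ and $\tilde\bfE$ solves the weak Cauchy problem; uniqueness (\autoref{lem:holmgren}) yields $\tilde\bfE=\bfE$. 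Finally, $\norm{\bfE_\beta}{W}\leq C(\delta,\nu)\norm{\bfE}{W}\to\norm{\bfE}{W}$ combined with weak lower semi-continuity of the norm forces $\norm{\bfE_\beta}{W}\to\norm{\bfE}{W}$, hence strong convergence in $W$ along the subsequence, extended to the whole family by a standard argument.

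The main obstacle throughout is the asymptotic coupling between $\delta$ and $\nu$: unlike in \autoref{thm:convergence-relaxed}, the new cross term on $\Gamma_1$ is not absorbed by a single weighted norm, and only the balance $\delta/\nu\to 1$ prevents the factor $C(\delta,\nu)$ from blowing up in the a priori estimate. This is precisely the rationale behind the asymptotic relation~\eqref{condition:nu}.
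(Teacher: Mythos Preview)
Your proof is correct and follows the paper's argument almost step by step: the Lax--Milgram well-posedness with coercivity constant $\min(\delta,\nu,1)$, the energy identity obtained by testing with $\bfphi=\bfE_\beta-\bfE$ and $\bfpsi=\bfF_\beta$, the a priori bound $\norm{\bfE_\beta}{W}\le C(\delta,\nu)\norm{\bfE}{W}$, the two estimates, and the weak-limit identification via \autoref{lem:holmgren} are all exactly as in the paper.

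The only genuine difference is the last step, upgrading weak to strong convergence. The paper goes back to the energy identity and uses it to write
\[
\norm{\bfE_\beta-\bfE}{W}^2 \;\le\; \Bigl(1-\tfrac{\delta}{\nu}\Bigr)\,\real{\dotprod{\bfE_\beta}{\bfE_\beta-\bfE}{V}} \;-\; \real{\dotprod{\bfE}{\bfE_\beta-\bfE}{W}},
\]
where the first term vanishes because $\delta/\nu\to 1$ and the second by weak convergence. You instead combine the a priori bound $\norm{\bfE_\beta}{W}\le C(\delta,\nu)\norm{\bfE}{W}$ with $C(\delta,\nu)\to 1$ and weak lower semicontinuity to get $\norm{\bfE_\beta}{W}\to\norm{\bfE}{W}$, and then invoke the standard Hilbert-space fact that weak convergence together with convergence of norms implies strong convergence. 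Both arguments rely on the same hypothesis \eqref{condition:nu}; yours is slightly more streamlined and avoids the algebraic manipulation of the orthogonality relation, while the paper's version makes the role of the factor $1-\delta/\nu$ more explicit.
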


	The proof is similar to the one of \autoref{thm:convergence-relaxed} and we only point out the influence of the parameter $\nu$ on the different steps of the proof.

	\begin{proof}
		As before, problem~\eqref{eq:qr-relaxed-reg} can be written in variational form involving a continuous and coercive sesqui-linear form on $W \times M$. The coercivity constant is now given by $\min(\delta,\nu,1)$.

		If, in addition, the Cauchy problem \eqref{eq:Cauchy} admits a solution $\bfE\in W$, the orthogonality relation reads as follows,
		\begin{equation}
			\label{eq:ortho}
			\delta\dotprod{\bfE_\beta}{\bfE_\beta - \bfE}{V} + \nu\dotprod{\gamma_t(\bfE_\beta)}{\gamma_t(\bfE_\beta - \bfE)}{0,\Gamma_1} + \eta^2\norm{\gamma_t(\bfE_\beta) - \bff}{0,\Gamma_0}^2 + \norm{\bfF_\beta}{W}^2 = 0
		\end{equation}
		by applying the same method used for the derivation of the relation~\eqref{eq:tmp1}.

		Developping the scalar products and applying Cauchy-Schwarz' inequality on the real parts yields the following estimation of $\bfE_\beta$
		\[
			\begin{split}
				\delta\norm{\bfE_\beta}{V}^2 + \nu\norm{\gamma_t(\bfE_\beta)}{0,\Gamma_1}^2
				+ \eta^2\norm{\gamma_t(\bfE_\beta) - \bff}{0,\Gamma_0}^2 + \norm{\bfF_\beta}{W}^2
				\leq \delta\norm{\bfE_\beta}{V} \norm{\bfE}{V} + \nu\norm{\gamma_t(\bfE_\beta)}{0,\Gamma_1} \norm{\gamma_t(\bfE)}{0,\Gamma_1}.
			\end{split}
		\]
		In order to get estimates for the $W$-norm, we notice that the left hand side can be minored by $\min(\delta,\nu) \norm{\bfE_\beta}{W}^2$, whereas the right hand side can be majored by $\max(\delta,\nu) \norm{\bfE_\beta}{W} \norm{\bfE}{W}$. Consequently,
		\begin{equation}
			\label{eq:estim-Ebeta}
			\norm{\bfE_\beta}{W} \leq \frac{\max(\delta,\nu)}{\min(\delta,\nu)} \norm{\bfE}{W}.
		\end{equation}
		Under the given assumptions, the sequence $(\bfE_\beta)_\beta$ is thus
		bounded in $W$ and we obtain as before that $F_\beta$ converges to $0$ in $M$ when $(\delta,\nu) \to 0$ since
		\[
		\norm{\bfF_\beta}{W}^2
			\leq \max(\delta,\nu) \norm{\bfE_\beta}{W} \norm{\bfE}{W}
			\leq \max(\delta,\nu)C(\delta,\nu)\norm{\bfE}{W}^2
		\]
		where $C(\delta,\nu)= \dfrac{\max(\delta,\nu)}{\min(\delta,\nu)}$ is bounded. This yields estimate \eqref{eq:estim-relaxed-reg-1}.
		In the same way, we get \eqref{eq:estim-relaxed-reg-2}.

		As before, we prove that the sequence $(\bfE_\beta)_\beta$ converges weakly to the solution $\bfE$ of the Cauchy problem. In order to show strong convergence, we notice that, according to \eqref{eq:ortho},
		\[\delta\real{\dotprod{\bfE_\beta}{\bfE_\beta - \bfE}{V}} + \nu\real{\dotprod{\gamma_t(\bfE_\beta)}{\gamma_t(\bfE_\beta - \bfE)}{0,\Gamma_1}} \leq 0.\]
		Hence,
		\begin{align*}
			\norm{\bfE_\beta - \bfE}{W}^2 &= \real{\dotprod{\bfE_\beta}{\bfE_\beta - \bfE}{W}} - \real{\dotprod{\bfE}{\bfE_\beta - \bfE}{W}} \\
			&\leq \left(1 - \frac{\delta}{\nu}\right) \real{\dotprod{\bfE_\beta}{\bfE_\beta - \bfE}{V}} - \real{\dotprod{\bfE}{\bfE_\beta - \bfE}{W}} \\
			&\leq \abs*{1 - \frac{\delta}{\nu}} \norm{\bfE_\beta}{V}\norm{\bfE_\beta - \bfE}{V} - \real{\dotprod{\bfE}{\bfE_\beta - \bfE}{W}}
		\end{align*}
		Now, the first term in the last inequality tends to $0$ according to the assumptions on $\delta$ and $\nu$ and since $(\bfE_\beta)_\beta$ is bounded in $W$ and thus in $V$. The second term tends to $0$ since $\bfE_\beta$ converges weakly to $\bfE$ in $W$. This completes the proof.
	\end{proof}

	\begin{remark}
		For the relaxed regularized version of the QR-method and under the regularity assumptions of \autoref{p:discretization}, the discretization error is now of order $\frac{h^s}{\min(\delta,\nu,1)}$ taking into account the coercivity constant of the bilinear form. Again, the boundary term in the $W$-norm is controlled by the $\Hcurl[\Omega]$-norm due to the continuity of the trace operator.
	\end{remark}

	\subsection{Strong formulations and regularity results}

	In this section, we give the strong formulation of the relaxed and regularized QR-method \eqref{eq:qr-relaxed-reg}. This allows us to get a regularity result which yields further insight in the numerical behavior of the discrete solution.
	Taking real-valued test fields in $\mathcal{D}(\Omega)^3$, we see that the solution $(\bfE_\beta,\bfF_\beta)$ satisfies the following partial differential equations on $\Omega$ in the distributional sense:
	\begin{subequations}
		\label{eq:EDP1}
		\begin{eqnarray}
			\delta\curl\curl\bfE_\beta + \delta\bfE_\beta + \curl\curl\overline{\bfF}_\beta - k^2\kappa\overline{\bfF}_\beta &=& 0 \stext{in} \Omega,\\
			\curl\curl\bfE_\beta -k^2\kappa\bfE_\beta - \curl\curl{\bfF}_\beta - {\bfF}_\beta &=& 0 \stext{in} \Omega.
		\end{eqnarray}
	\end{subequations}
	System~\eqref{eq:EDP1} yields
	\begin{subequations}
		\label{eq:EDP2}
		\begin{align}
			\curl\curl(\delta\bfE_\beta + \overline{\bfF}_\beta)&=\bfG \stext{in} \Omega,\\
			\curl\curl(\bfE_\beta - \bfF_\beta) &=\bfH \stext{in} \Omega,
		\end{align}
	\end{subequations}
	with $\bfG = -\delta\bfE_\beta + k^2\kappa\overline{\bfF}_\beta \in L^2(\Omega)^3$ and $\bfH = k^2\kappa\bfE_\beta + \bfF_\beta\in L^2(\Omega)^3$.	For $\delta\neq 1$, it thus follows that $\curl\bfE_\beta$ and $\curl\bfF_\beta$ belong to $\Hcurl[\Omega]$ and the traces $\gamma_t(\curl\bfE_\beta)$, $\gamma_t(\curl\bfF_\beta)$ are well defined as elements in $Y(\Gamma)$.
	Partial integration with appropriated test fields $\Phi$ and $\Psi$ yields the following boundary conditions on $\Gamma_0$ :
	\begin{subequations}
		\label{eq:BC_G0}
		\begin{align}
			\delta\gamma_t(\curl\bfE_\beta) + \delta\gamma_t(\bfE_\beta) + \eta^2\gamma_t(\bfE_\beta) + \gamma_t(\curl\overline{\bfF}_\beta)&=\eta^2\bff \stext{on} \Gamma_0,\\
			\gamma_t(\curl\bfE_\beta) - \gamma_t(\curl\bfF_\beta) - \gamma_t(\bfF_\beta) &= \bfg \stext{on} \Gamma_0.
		\end{align}
	\end{subequations}
	On the inaccessible part $\Gamma_1$ of the boundary, we get in a similar way
	\begin{equation}
		\label{eq:BC_G1}
		\delta\gamma_t(\curl\bfE_\beta) + \nu\gamma_t(\bfE_\beta) +\gamma_t(\curl\overline{\bfF}_\beta) = 0 \stext{on} \Gamma_1.
	\end{equation}
	It remains to study the divergence of the fields $\bfE_\beta$ and $\bfF_\beta$. Taking test fields that are the gradients of some regular scalar potentials in $H^1_0(\Omega)$, we get
	\begin{subequations}
		\label{eq:div}
		\begin{align}
			\delta\div \bfE_\beta - k^2\div(\kappa\overline{\bfF}_\beta) &= 0 \stext{in} \Omega,\\
			k^2\div(\kappa\bfE_\beta) + \div \bfF_\beta &= 0 \stext{in} \Omega.
		\end{align}
	\end{subequations}
	The following regularity result in Lipschitz domains is due to Costabel \cite{Costabel90}.
	\begin{lemma}
		\label{t:costabel}
		Let $\bfu$ be a vector field such that
		$\bfu\in \Hcurl[\Omega]$ and $\div\bfu\in L^2(\Omega)$.
		Then $\bfu\times\bfn\in L^2(\Gamma)^3$ if and only if $\bfu\cdot\bfn\in L^2(\Gamma)$. Further, if one of these conditions is satisfied, $\bfu$ belongs to $H^{1/2}(\Omega)^3$.
	\end{lemma}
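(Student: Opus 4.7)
The plan is to follow a localization and Fourier-analysis strategy on a flattened half-space, as in Costabel's original argument. The two assertions (equivalence of the trace conditions and $H^{1/2}$ regularity) are actually tied together: once one obtains the $H^{1/2}$ regularity from the data of one of the traces, the equivalence follows by continuity of the trace operator $H^{1/2}(\Omega)^3 \to L^2(\Gamma)^3$ applied to both the normal and tangential components.

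First I would establish the key integration-by-parts identity for smooth vector fields
\[
\int_\Omega \bigl(|\curl\bfu|^2 + |\div\bfu|^2\bigr)\,dx \;=\; \int_\Omega |\nabla\bfu|^2\,dx \;+\; \int_\Gamma B(\bfu,\bfu)\,d\sigma,
\]
where $B(\bfu,\bfu)$ is a boundary bilinear form expressible in terms of the tangential trace $\bfu\times\bfn$, the normal trace $\bfu\cdot\bfn$, the tangential derivatives of these traces, and the second fundamental form of $\Gamma$. In particular $B$ can be bounded symmetrically by $L^2$-quantities involving either $\bfu\times\bfn$ or $\bfu\cdot\bfn$ together with $H^{-1/2}$-norms of the other trace. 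This formal identity is what drives the equivalence and is the source of the $H^{1/2}$ gain.

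Next, to obtain the actual $H^{1/2}$ regularity, I would use a partition of unity subordinate to a finite Lipschitz atlas of $\Gamma$. Multiplying $\bfu$ by a cutoff and pushing forward via a bi-Lipschitz chart reduces the problem to a model situation in $\R^3_+$: a field $\bfv$, compactly supported near $\{x_3=0\}$, with $\curl\bfv,\div\bfv\in L^2$ and with, say, $\bfv\cdot\bfn\in L^2(\R^2)$. Applying the tangential Fourier transform and writing $\curl$ and $\div$ as a linear system in $(\widehat{v_1},\widehat{v_2},\widehat{v_3})$ with respect to $\partial_3$, one can solve for $\partial_3 \bfv$ in terms of $\curl\bfv$, $\div\bfv$, and tangential derivatives of $\bfv$, and then derive the $H^{1/2}$ estimate by a Plancherel argument combined with the $L^2$ trace hypothesis. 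Transporting back through the Lipschitz chart and summing over the partition of unity gives $\bfu\in H^{1/2}(\Omega)^3$, whence both traces lie in $L^2(\Gamma)$ automatically.

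The main obstacle is precisely the Lipschitz (as opposed to $C^{1,1}$ or smooth) character of $\Gamma$. The formal identity above involves tangential derivatives of the boundary traces, which are not well defined when $\Gamma$ is only Lipschitz; likewise the change of variables in a Lipschitz chart does not preserve $H^s$ for $s>1/2$. One therefore cannot work directly with $|\nabla\bfu|^2$, and has to justify everything through a regularization/density procedure using fields that are smooth up to the boundary and passing to the limit in the tangential Fourier decomposition. Controlling the commutators introduced by the cutoff and by the Lipschitz diffeomorphism, and ensuring they remain bounded in the $H^{1/2}$-scale (which is the critical scale where Lipschitz changes of variables still act continuously) is where the bulk of the technical work lies.
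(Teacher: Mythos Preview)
The paper does not prove this lemma at all: it is stated as a citation of Costabel's 1990 result (\cite{Costabel90}) and used as a black box in the subsequent regularity theorems. So there is no ``paper's own proof'' to compare against; your attempt goes well beyond what the authors do here.

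As for the content of your sketch: the overall strategy (localize, flatten, exploit an algebraic identity linking $|\nabla\bfu|^2$ to $|\curl\bfu|^2+|\div\bfu|^2$ plus boundary terms, then read off $H^{1/2}$) is the right circle of ideas, and you correctly identify the Lipschitz regularity of $\Gamma$ as the main obstacle. However, Costabel's actual proof does not proceed via tangential Fourier transform on the half-space. It is based on a Rellich-type identity: one integrates $\curl\bfu$ and $\div\bfu$ against $\bfu$ weighted by a smooth transversal vector field (one that is uniformly non-tangential to $\Gamma$, which exists precisely because $\Gamma$ is Lipschitz). This produces boundary integrals in which the $L^2(\Gamma)$-norms of $\bfu\times\bfn$ and $\bfu\cdot\bfn$ appear with controllable coefficients and can be estimated against each other and against the interior data. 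The Rellich approach is more robust on Lipschitz domains than your Fourier/chart route, because it avoids the delicate commutator estimates you mention at the end: no change of variables is needed, and the transversal field absorbs the low regularity of the boundary. Your plan is not wrong in principle, but the step ``transport back through the Lipschitz chart in the $H^{1/2}$-scale'' would require essentially reproving much of the theory of Sobolev spaces on Lipschitz domains, whereas the Rellich identity gives the result directly.
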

	We are now able to state a regularity result for the solution of the RR-QR method in the case of a constant coefficient $\kappa$:
	\begin{theorem}
		\label{t:regularity}
		Let $(\bfE_\beta,\bfF_\beta) \in W\times M$ be the solution of problem \eqref{eq:qr-relaxed-reg} with a constant refractive index $\kappa$ and data $(\bff,\bfg)\in L^2(\Gamma_0)^6$. Assume that the parameter $\delta>0$ is such that  $\delta\neq k^4 |\kappa|^2$.
		Then, $\div \bfE_\beta = \div\bfF_\beta = 0$ on $\Omega$ and
		$(\bfE_\beta,\bfF_\beta)\in H^{1/2}(\Omega)^6$.
	\end{theorem}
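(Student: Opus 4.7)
The plan is to first show that $\div\bfE_\beta$ and $\div\bfF_\beta$ vanish identically on $\Omega$, then invoke Costabel's regularity result \autoref{t:costabel} separately for each field. The hypothesis that $\kappa$ is a scalar constant is crucial to reduce the divergence identities \eqref{eq:div} to a purely algebraic coupling between $\div\bfE_\beta$ and $\div\bfF_\beta$, while the exclusion $\delta\neq k^4|\kappa|^2$ prevents this coupling from being degenerate.

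For the first step, with $\kappa$ constant the system \eqref{eq:div} becomes
\[
\delta\,\div\bfE_\beta \;=\; k^2\kappa\,\overline{\div\bfF_\beta},
\qquad
\div\bfF_\beta \;=\; -k^2\kappa\,\div\bfE_\beta.
\]
Conjugating the second identity and inserting it into the first yields the pointwise relation $\delta\,\div\bfE_\beta = -k^4|\kappa|^2\,\overline{\div\bfE_\beta}$ in $\Omega$. Taking moduli converts this into the real scalar equation $(\delta - k^4|\kappa|^2)\,|\div\bfE_\beta| = 0$ almost everywhere, which together with $\delta\neq k^4|\kappa|^2$ forces $\div\bfE_\beta\equiv 0$. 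The second identity then gives $\div\bfF_\beta\equiv 0$.

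For the regularity conclusion, I would apply \autoref{t:costabel} to $\bfE_\beta$ and $\bfF_\beta$ in turn. Membership $\bfE_\beta\in W$ directly provides $\bfE_\beta\in\Hcurl[\Omega]$ and $\gamma_t(\bfE_\beta)\in L^2(\Gamma)^3$ by definition of the $W$-norm; combined with $\div\bfE_\beta=0\in L^2(\Omega)$, \autoref{t:costabel} gives $\bfE_\beta\in H^{1/2}(\Omega)^3$. For $\bfF_\beta\in M$, the defining property of $M$ ensures $\gamma_t(\bfF_\beta)\in L^2(\Gamma_0)^3$ and $\gamma_t(\bfF_\beta) = 0$ on $\Gamma_1$, so the global tangential trace belongs to $L^2(\Gamma)^3$; together with $\div\bfF_\beta=0$, \autoref{t:costabel} again yields $\bfF_\beta\in H^{1/2}(\Omega)^3$.

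The main delicate point is the derivation of the scalar equation for $|\div\bfE_\beta|$: because the two identities couple $\div\bfE_\beta$ with the conjugate $\overline{\div\bfF_\beta}$, the elimination must pass through a conjugation step and produces a relation involving both $\div\bfE_\beta$ and its complex conjugate. The excluded value $\delta = k^4|\kappa|^2$ is precisely the resonant case in which this relation ceases to force the divergence to vanish, which explains the hypothesis of the theorem. Once this step is completed, the rest is essentially bookkeeping of boundary traces, with the mild care that $\gamma_t(\bfF_\beta)$ must be interpreted globally as an $L^2(\Gamma)^3$-field thanks to the vanishing trace condition on $\Gamma_1$.
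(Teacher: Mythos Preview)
Your proof is correct and follows essentially the same approach as the paper: derive the divergence identities \eqref{eq:div}, exploit the constancy of $\kappa$ to reduce them to a nondegenerate algebraic system forcing $\div\bfE_\beta=\div\bfF_\beta=0$, and then apply Costabel's lemma \autoref{t:costabel}. The only cosmetic difference is that the paper splits into real and imaginary parts and computes the determinant of the resulting $4\times 4$ real system, whereas you eliminate directly over the complexes via conjugation and a modulus argument; both routes lead to the same excluded value $\delta=k^4|\kappa|^2$.
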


	\begin{proof}
		For easier reading, denote by $\bfz_R$ (resp. $\bfz_I$) the real (resp. imaginary) part of a complex number $\bfz$ and let $\bfz = \div \bfE_\beta$ and $\bfw = \div \bfF_\beta$. Then, relations \eqref{eq:div} imply that
		$\begin{bmatrix}
		\bfz_R, \bfz_I, \bfw_R, \bfw_I
		\end{bmatrix}^t$ is solution of the linear system $Ax= 0$ with
		\[
			A = \begin{pmatrix}
				\delta & 0 & -k^2\kappa_R & -k^2\kappa_I\\
				0 & \delta & -k^2\kappa_I &  k^2\kappa_R\\
				k^2\kappa_R & -k^2\kappa_I & 1 & 0 \\
				k^2\kappa_I & k^2\kappa_R & 0 & 1
			\end{pmatrix}.
		\]
		A straightforward computation yields
		\[ {\rm det}(A) = (\delta + k^4 |\kappa |^2)(\delta - k^4 |\kappa|^2). \]
		It follows that $\div\bfE_\beta = \div\bfF_\beta = 0$ in $\Omega$ provided
		that $\delta\neq k^4 |\kappa|^2$.
		Applying \autoref{t:costabel} to the solution  of the RR-QR method shows that has $(\bfE_\beta,\bfF_\beta)$ at least $H^{1/2}$-regularity.
	\end{proof}

	\begin{remark}
		If $\kappa$ is a regular function of $W^{1,\infty}(\Omega)$, we can prove in a similar way that $\div\bfE_\beta$ and $\div\bfF_\beta$ belong to $L^2(\Omega)$. Indeed, developping the divergence of the products in \eqref{eq:div} yields a system with the same matrix $A$ and a right-hand side in $L^2(\Omega)$.
		Concerning the particular case of coefficients $\kappa\in L^\infty(\Omega)$ that are piecewise constant with respect to a partition $\mathcal{P} = \{\Omega_p\}_{p=1:P}$ into regular subdomains $\Omega_p$, we get in a first step
		that $\bfE_\beta$ and $\bfF_\beta$ are divergence free on each part $\Omega_p$ provided $\delta\neq k^4|\kappa_p|^2 $ for all $p$. Then, the
		transmission conditions along the interfaces $\Sigma_p$ read
		\begin{subequations}\label{eq:saut}
			\begin{align}
				\delta[\gamma_n(\bfE_\beta)] - k^2[\gamma_n(\kappa\overline{\bfF}_\beta)] &= 0 \stext{on} \Sigma_p,\\
				k^2[\gamma_n(\kappa\bfE_\beta)] + [\gamma_n(\bfF_\beta)] &= 0 \stext{on} \Sigma_p.
			\end{align}
		\end{subequations}
		For $\delta\neq 1$ we get $[\gamma_n\bfE_\beta] = [\gamma_n(\kappa\bfE_\beta)] = 0$ which implies $\gamma_n(\bfE_\beta) = 0$ unless $[\kappa]\neq 0$. Again, $\div\bfE_\beta$ belongs to $L^2(\Omega)$ and the same result holds for $\bfF_\beta$.
	\end{remark}

	\begin{theorem}
		\label{t:regularity_curl}
		Under the assumptions of \autoref{t:regularity} and if in addition
		$\delta\neq 1$, we have $\curl\bfE_\beta\in \Hcurl[\Omega]$ and $\curl\bfF_\beta\in \Hcurl[\Omega]$.
		Further, for any function $\xi\in \mathcal{C}^\infty(\overline{\Omega})$ such that $\xi\equiv 0$ in a vicinity $\mathcal{V}$ of $\overline{\Gamma}_0\cap\overline{\Gamma}_1$, the fields $\curl(\xi\bfE_\beta)$
		and $\curl(\xi\bfF_\beta)$ belong to $\Hcurl[\Omega]\cap H^{1/2}(\Omega)^3$.
	\end{theorem}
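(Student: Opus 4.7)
The plan has two parts, matching the two assertions of the theorem.

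For the first assertion, the approach I would take is to exploit directly the distributional identities \eqref{eq:EDP2} established just before \autoref{t:regularity}: they show that $\curl(\delta\bfE_\beta+\overline{\bfF}_\beta)$ and $\curl(\bfE_\beta-\bfF_\beta)$ belong to $\Hcurl[\Omega]$, since their respective curls coincide with the $L^2$-fields $\bfG$ and $\bfH$. I would then recover $\curl\bfE_\beta$ and $\curl\bfF_\beta$ individually by inverting the $2\times 2$ linear system formed by these two combinations, after separating real and imaginary parts because of the complex conjugation appearing in the first one. The two resulting decoupled blocks have determinants $\delta+1$ and $\delta-1$; the first never vanishes because $\delta>0$, and the second is non-zero precisely under the extra hypothesis $\delta\neq 1$, so both curl fields land in $\Hcurl[\Omega]$.

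For the second assertion, the plan is to apply Costabel's \autoref{t:costabel} to the divergence-free fields $\curl(\xi\bfE_\beta)$ and $\curl(\xi\bfF_\beta)$. Membership in $\Hcurl[\Omega]$ will follow from $\curl(\xi\bfu)=\xi\curl\bfu+\nabla\xi\times\bfu$ together with the same $2\times 2$ separation trick applied in the bulk to \eqref{eq:EDP1}, which produces $\curl\curl\bfE_\beta,\curl\curl\bfF_\beta\in L^2(\Omega)^3$ and hence enough regularity to identify $\curl\curl(\xi\bfE_\beta)$ with an $L^2$-field (interior elliptic regularity away from the junction supplies the directional derivatives of $\bfE_\beta$ needed in the commutator); divergence-freeness is automatic since $\div\circ\curl=0$. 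The core step is then the tangential-trace criterion of Costabel: the lower-order contribution $\gamma_t(\nabla\xi\times\bfE_\beta)$ already lies in $L^2(\Gamma)^3$ thanks to $\bfE_\beta\in W$ and to $\bfE_\beta\cdot\bfn\in L^2(\Gamma)$, the latter provided by \autoref{t:regularity} together with \autoref{t:costabel} applied to $\bfE_\beta$ itself. So it suffices to control $\xi\gamma_t(\curl\bfE_\beta)$ and $\xi\gamma_t(\curl\bfF_\beta)$ on the support of $\xi|_\Gamma$.

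On $\Gamma_0$ I would invert the $2\times 2$ boundary system formed by the two conditions \eqref{eq:BC_G0}: the determinant is once more $(\delta+1)(\delta-1)$, non-zero under the standing hypotheses, and the right-hand sides involve only the $L^2$-data $\bff,\bfg$ and the $L^2$-traces of $\bfE_\beta$ and $\bfF_\beta$. On $\Gamma_1$ only the single natural condition \eqref{eq:BC_G1} is available and this algebraic separation breaks down; this is the main obstacle. The argument I would use there combines \eqref{eq:BC_G1} with the essential condition $\gamma_t\bfF_\beta=0$ inherited from $M$ and with the surface identity $(\curl\bfv)\cdot\bfn=\operatorname{curl}_\Gamma(\gamma_t\bfv)$, which forces $\gamma_n(\curl\bfF_\beta)=0$ on $\Gamma_1$: via the equivalence $\gamma_t\in L^2\Leftrightarrow\gamma_n\in L^2$ of \autoref{t:costabel} this supplies the missing second relation and feeds back through \eqref{eq:BC_G1} to bound $\gamma_t(\curl\bfE_\beta)$ on $\Gamma_1$ in $L^2$. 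Throughout, the cutoff $\xi$ plays the crucial role of keeping all boundary computations strictly away from the mixed-condition junction $\overline{\Gamma}_0\cap\overline{\Gamma}_1$, where the two natural conditions meet and where the classical corner singularities of mixed boundary value problems would otherwise destroy the $L^2$-traces and block the Costabel argument.
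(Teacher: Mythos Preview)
Your strategy coincides with the paper's: for the first assertion you both invert the $2\times 2$ real system stemming from \eqref{eq:EDP2}, and for the second you both control $\gamma_t(\curl\bfE_\beta),\gamma_t(\curl\bfF_\beta)$ on $\Gamma_0$ by inverting \eqref{eq:BC_G0}, then on $\Gamma_1$ pass from the essential condition $\gamma_t\bfF_\beta=0$ to $\gamma_n(\curl\bfF_\beta)=0$ and invoke Costabel's equivalence, finally feeding the result back through \eqref{eq:BC_G1} to treat $\bfE_\beta$.

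There are, however, two genuine technical gaps in your execution that the paper closes differently. First, you apply \autoref{t:costabel} to $\curl(\xi\bfE_\beta)$ and therefore must show $\curl\curl(\xi\bfE_\beta)\in L^2(\Omega)^3$; expanding the commutator produces a term $(\nabla\xi\cdot\nabla)\bfE_\beta$ that requires first derivatives of $\bfE_\beta$ on $\operatorname{supp}\nabla\xi$. Your appeal to ``interior elliptic regularity'' does not suffice: $\nabla\xi$ is supported up to the boundary (it vanishes only near the junction), so what is needed is boundary regularity, which is exactly what you are trying to prove. The paper sidesteps this by applying Costabel to $\xi\,\curl\bfF_\beta$ rather than to $\curl(\xi\bfF_\beta)$; then $\curl(\xi\,\curl\bfF_\beta)=\xi\,\curl\curl\bfF_\beta+\nabla\xi\times\curl\bfF_\beta$ and $\div(\xi\,\curl\bfF_\beta)=\nabla\xi\cdot\curl\bfF_\beta$ are in $L^2$ without any commutator, and the statement for $\curl(\xi\bfF_\beta)$ follows a posteriori by adding the lower-order piece $\nabla\xi\times\bfF_\beta\in H^{1/2}$ (using \autoref{t:regularity}).

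Second, \autoref{t:costabel} is a global statement on $\Gamma$, not on $\Gamma_1$, so you cannot directly convert $\gamma_n(\curl\bfF_\beta)=0$ on $\Gamma_1$ into an $L^2$ tangential trace there. The paper resolves this by introducing an auxiliary cutoff $\eta$ vanishing near \emph{all} of $\Gamma_0$; then $\gamma_n(\eta\,\curl\bfF_\beta)=0$ on the full boundary $\Gamma$, Costabel yields $\gamma_t(\eta\,\curl\bfF_\beta)\in L^2(\Gamma)^3$, and only afterwards does one combine this with the $\Gamma_0$ information to pass to the cutoff $\xi$ that vanishes merely near $\overline{\Gamma}_0\cap\overline{\Gamma}_1$.
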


	\begin{proof}
		Since the data $\bff$ and $\bfg$ as well as the trace fields $\gamma_t(\bfE_\beta)$ and $\gamma_t(\bfF_\beta)$ belong to $L^2(\Gamma_0)^3$, we may deduce from \eqref{eq:BC_G0} that $\gamma_t(\curl\bfE_\beta)$ and $\gamma_t(\curl\bfF_\beta)$ belong to $L^2(\Gamma_0)^3$ provided that $\delta\neq 1$.
		On the inaccessible part $\Gamma_1$, we have $\gamma_t(\bfF_\beta) = 0$ on $\Gamma_1$ since
		$\bfF_\beta$ belongs to the vector space $M$. According to a classical differential identity on surfaces, this implies $\gamma_n(\curl\bfF_\beta) = 0$ where $\gamma_n$ is the extension of the normal trace operator.
		Now, let $\eta$ be a smooth function that vanishes near the accessible part $\Gamma_0$ and let $\bfw = \curl\bfF_\beta$. We have $\gamma_n(\eta\bfw) = 0$ on $\Gamma$. A straightforward computation shows that $\curl(\eta\bfw)\in L^2(\Omega)^3$ and $\div(\eta\bfw)\in L^2(\Omega)$. According to \autoref{t:costabel}, the condition $\gamma_n(\eta\bfw)\in L^2(\Gamma)^3$ implies $\gamma_t(\eta\bfw)\in L^2(\Gamma)^3$. Since we already proved that $\gamma_t(\bfw)$ belongs to
		$L^2(\Gamma_0)^3$, we get the same result for $\xi\bfw$ with a function $\xi$ that vanishes only near the intersection
		of $\overline{\Gamma}_0$ and $\overline{\Gamma}_1$. Applying once again Costabel's regularity result yields $\xi\bfw\in H^{1/2}(\Omega)^3$.
		The result for $\curl\bfE_\beta$ can be obtained in a similar way, noticing that $\gamma_t(\curl\bfE_\beta) = -\nu\gamma_t(\bfE_\beta) -\gamma_t(\curl\overline{\bfF}_\beta)$ on $\Gamma_1$ according to \eqref{eq:BC_G1}.
	\end{proof}

	\begin{remark}
		The above regularity results do not yield global $H^{1/2}$-regularity for
		the curl of the fields $\bfE_\beta$ and $\bfF_\beta$. This probably explains the singular behavior of the discrete solutions near the intersection between $\overline{\Gamma}_0$ and $\overline{\Gamma}_1$ observed in the numerical simulations.
	\end{remark}

	\subsection{Numerical results with noisy data}

	We use the same physical parameters as in \autoref{sec:numerical_classic}. In the sequel, we describe the generation of synthetic noisy data. To simplify the notation, we consider here that the input data $\bff$ and $\bfg$ are vectors of degrees of freedom. They are perturbed as follows. First, two vectors $\bfb_\bff$ and $\bfb_\bfg$ are generated, following a standard normal distribution. Then, the perturbed data $(\bff^p,\bfg^p)$ are obtained from
	\[\bff^p = \bff + p\frac{\norm{\bff}{}}{\norm{\bfb_\bff}{}}\bfb_\bff, \quad \bfg^p = \bfg +p\frac{\norm{\bfg}{}}{\norm{\bfb_\bfg}{}}\bfb_\bfg,\]
	where $p > 0$ is the applied level of noise and $\| \cdot \|$ the $\ell^2$-norm.

	Numerical results are presented for the second version of the regularized relaxed quasi-reversibility (RR-QR) method~\eqref{eq:qr-relaxed-reg} with $\nu = \delta$ and $p = \pc{5}$ noise.
	Different choices for the value of the parameter $\eta$ have been tested, and for each choice of $\eta$ the error of the QR-method has been evaluated with respect to $\delta$. It was found that the ratio between $\eta$ and the corresponding optimal $\delta$ was constant. In the sequel, the parameter $\eta$ is fixed automatically according to the following procedure. First, we compute the Riesz representative $\bfG^p$ of $\bfg^p$ in the Hilbert space $M$ by solving
	\[\dotprod{\bfG^p}{\bfpsi}{W} = \dotprod{\bfg^p}{\bfpsi}{0,\Gamma_0}, \quad \forall \bfpsi \in M.\]
	Then, we define
	\[\eta = \frac{\norm{\bfG^p}{{W}}}{\norm{\bff^p}{L^2(\Gamma_0)}}.\]
	As suggested in \cite{BR18}, $\eta$ may then be interpreted as a balance term between the data $\bfG^p$ and $\bff^p$ which are given in different measurement units.

	First, let $\Omega$ be the unit disc. In \autoref{fig:err_delta_relax} we show the evolution of the error in $L^2(\Omega)$-norm with respect to $\delta$, for the two configurations of the boundary. For the specific case of $\delta$ minimizing the error, we show this error over the whole domain in \autoref{fig:err_qr_relax} and we list the different errors in \autoref{tab:err_qr_relax}.

	\begin{figure}
		\centering
		\includegraphics[width=0.33\textwidth]{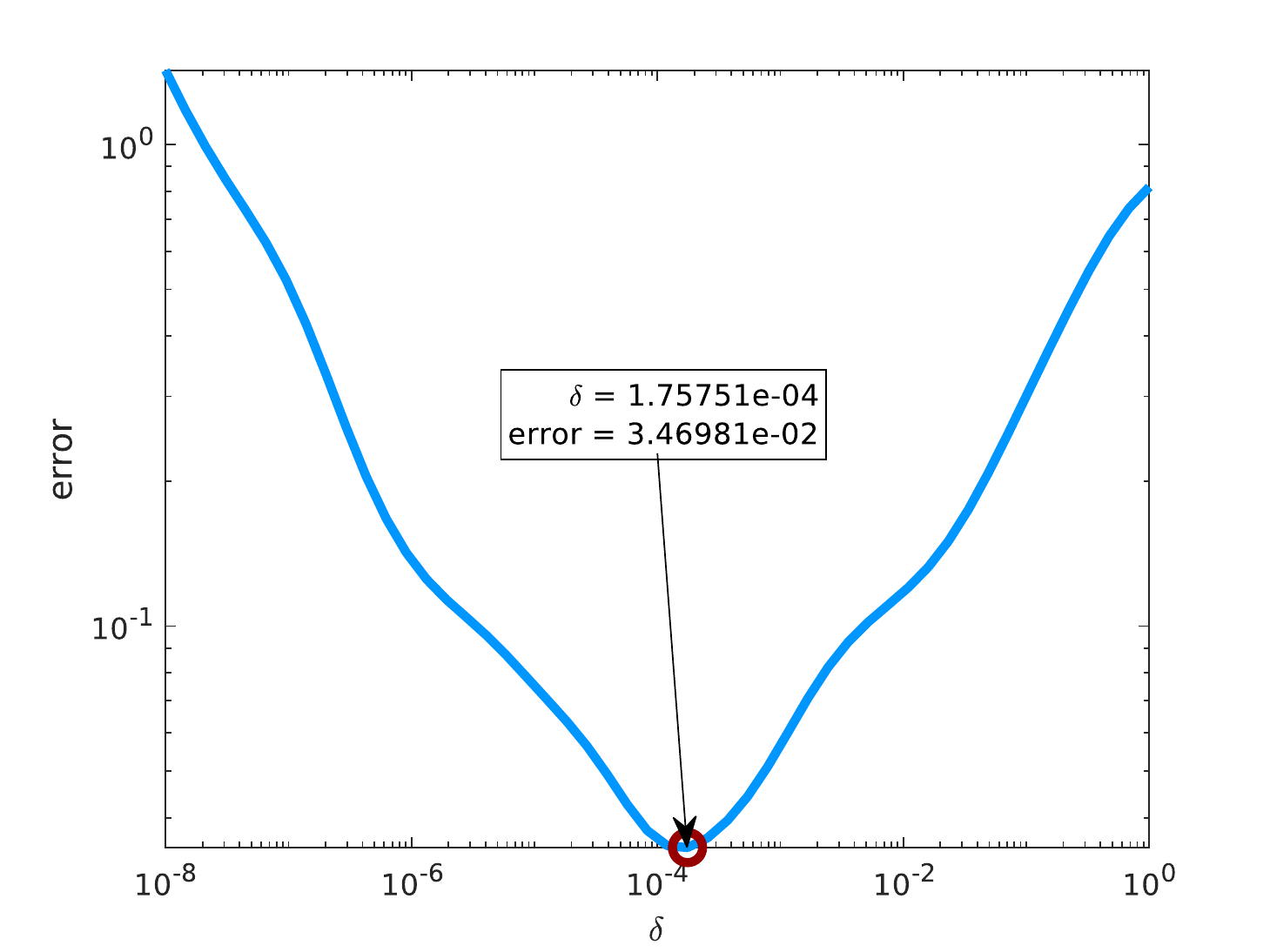}
		\hspace{0.15\textwidth}
		\includegraphics[width=0.33\textwidth]{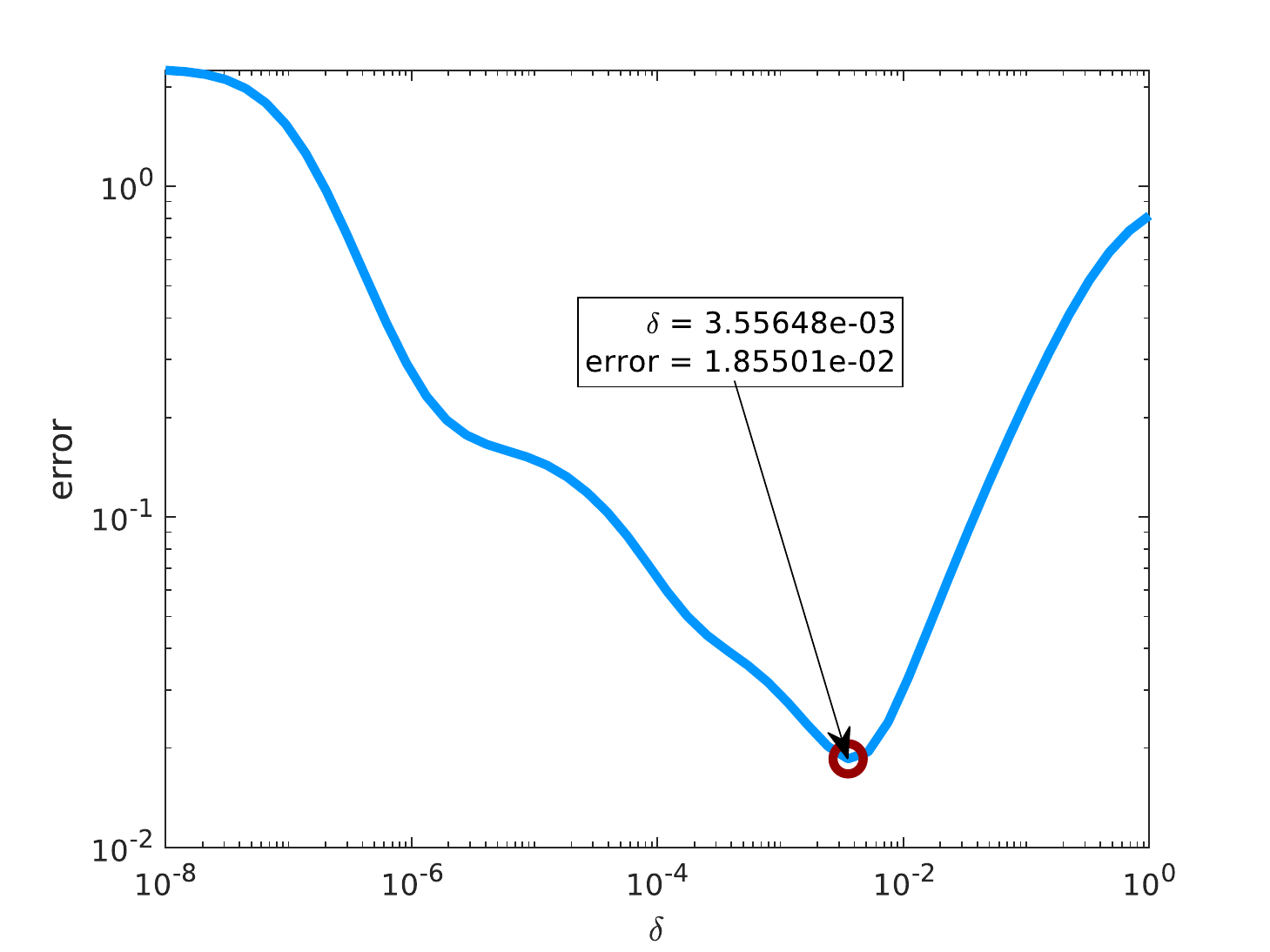}
		\caption{Unit disc. \pc{5} noisy data. RR-QR method. Relative error $\frac{\norm{\bfE - \bfE_\beta}{0,\Omega}}{\norm{\bfE}{0,\Omega}}$ with respect to the regularization parameter $\delta$ at fixed $\eta$ and $\nu=\delta$. Left: configuration G34. Right: configuration GE37.}
		\label{fig:err_delta_relax}
	\end{figure}

	\begin{figure}
		\centering
		\includegraphics[width=0.45\textwidth]{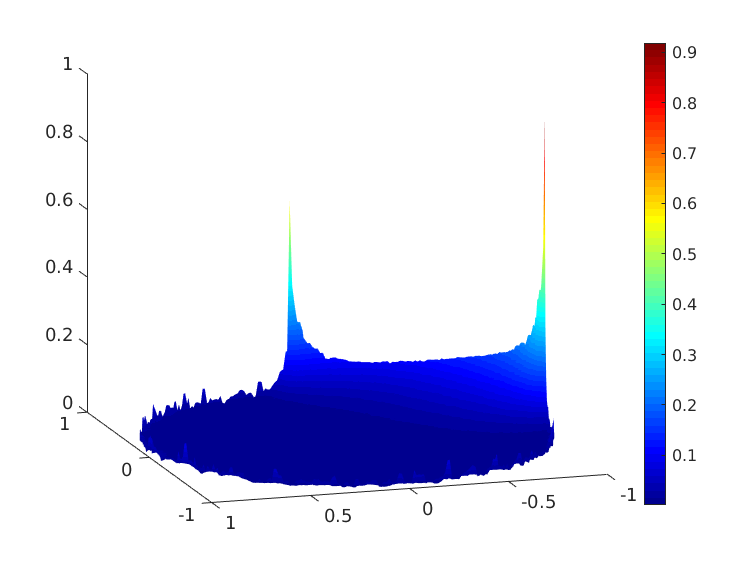}
		\hfill
		\includegraphics[width=0.45\textwidth]{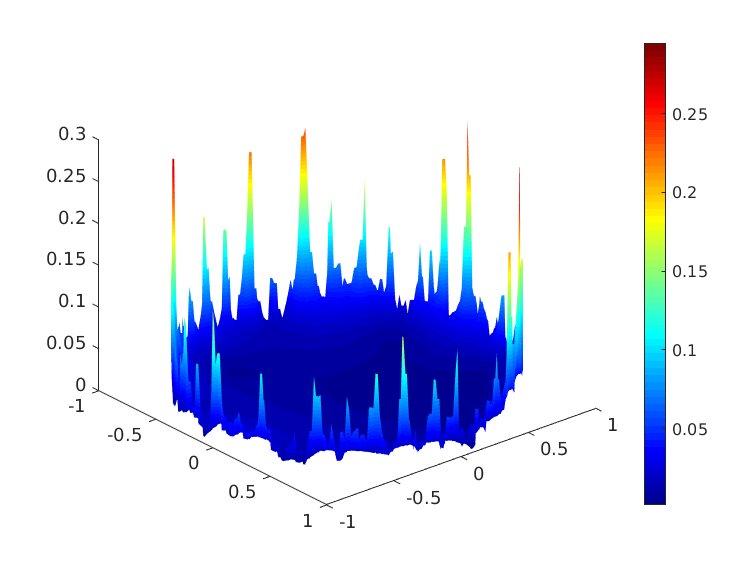}
		\caption{Unit disc. \pc{5} noisy data. RR-QR method. Modulus of the error $\abs{\bfE - \bfE_\beta}$. Left: configuration G34. Right: configuration GE37.}
		\label{fig:err_qr_relax}
	\end{figure}

	\begin{table}
		\centering
		\caption{Unit disc. \pc{5} noisy data. RR-QR method. Errors in the approximation of $\bfE$ for $\nu = \delta$ at optimal delta and automatically fixed $\eta$.}
		\label{tab:err_qr_relax}

		\begin{tabular}{ccccc}
			\toprule
			Configuration & $\frac{\norm{\bfE - \bfE_\delta}{0,\Omega}}{\norm{\bfE}{0,\Omega}}$ & $\frac{\norm{(\bfE - \bfE_\delta) \times \bfn}{0,\Gamma_0}}{\norm{\bfE \times \bfn}{0,\Gamma_0}}$ & $\frac{\norm{(\bfE - \bfE_\delta) \times \bfn}{0,\Gamma_1}}{\norm{\bfE \times \bfn}{0,\Gamma_1}}$ & $\norm{\bfF_\delta}{0,\Omega}$ \\
			\midrule
			G34 & 3.4698e-02 & 2.9145e-02 & 3.5686e-01 & 7.0667e-03 \\
			GE37 & 1.8550e-02 & 3.2848e-02 & 1.7154e-01 & 1.1857e-02 \\
			\bottomrule
		\end{tabular}
	\end{table}

	With the same settings, we now test the regularized relaxed quasi-reversibility method (RR-QR) in the ring with extension/restriction. The relative error in $\bfE$ with respect to $\delta$ is shown in \autoref{fig:err_delta_relax_filled_ring}. For the optimal values of $\delta$, the errors are reported in \autoref{tab:err_qr_relax_filled_ring} and illustrated in \autoref{fig:err_qr_relax_filled_ring}. One notices the good performance of the method for the three configurations with errors below \pc{8} in all norms except for configuration G34 on $\Gamma_1$ where the error amounts to \pc{15}.
	In particular, the error on the interior boundary $\Gamma_i$ is under $5\%$ for the three configurations.
	The method performs better in the ring configuration than in the unit disc.

	\begin{figure}
		\centering
		\includegraphics[width=0.32\textwidth]{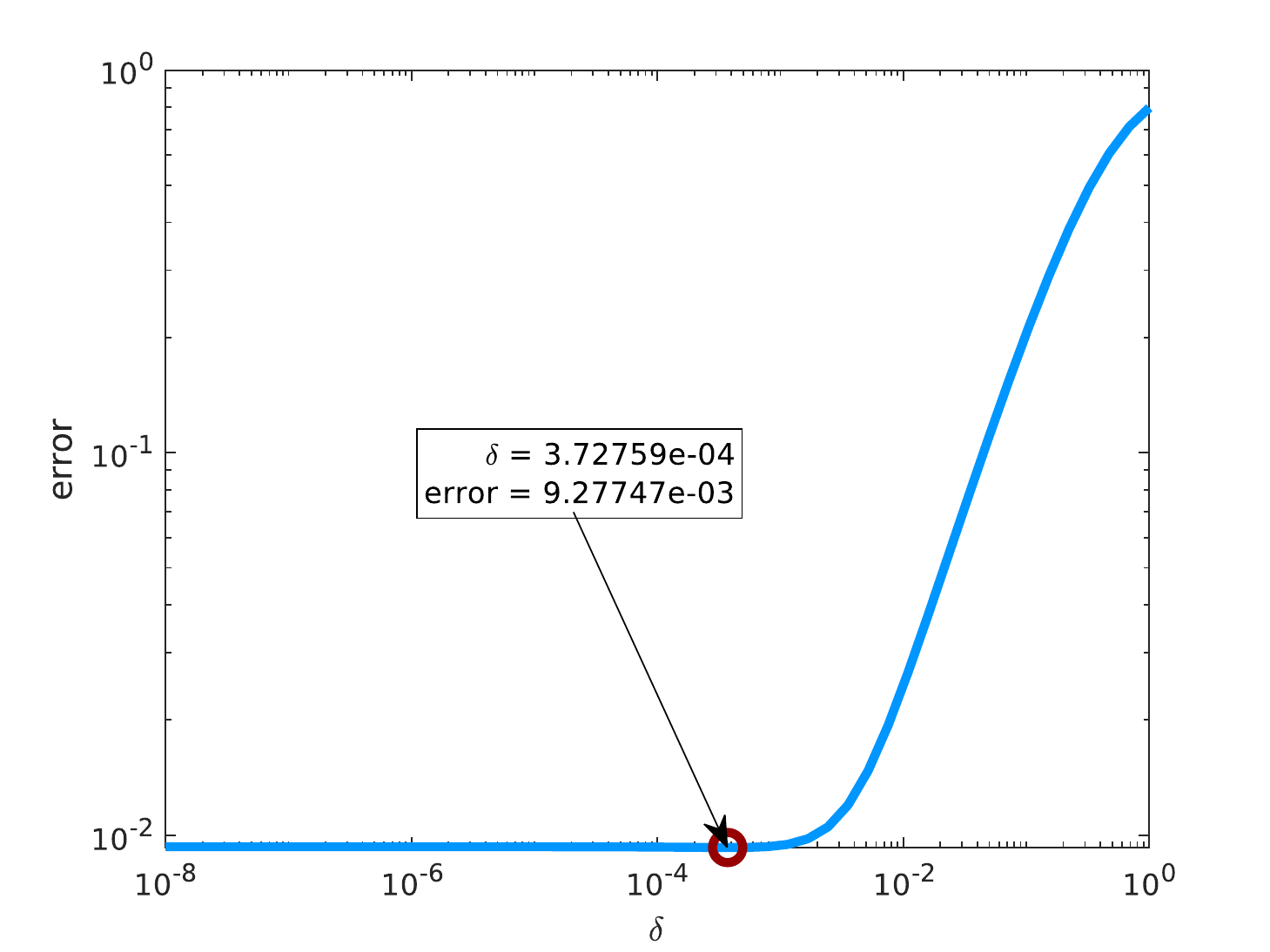}
		\hfill
		\includegraphics[width=0.32\textwidth]{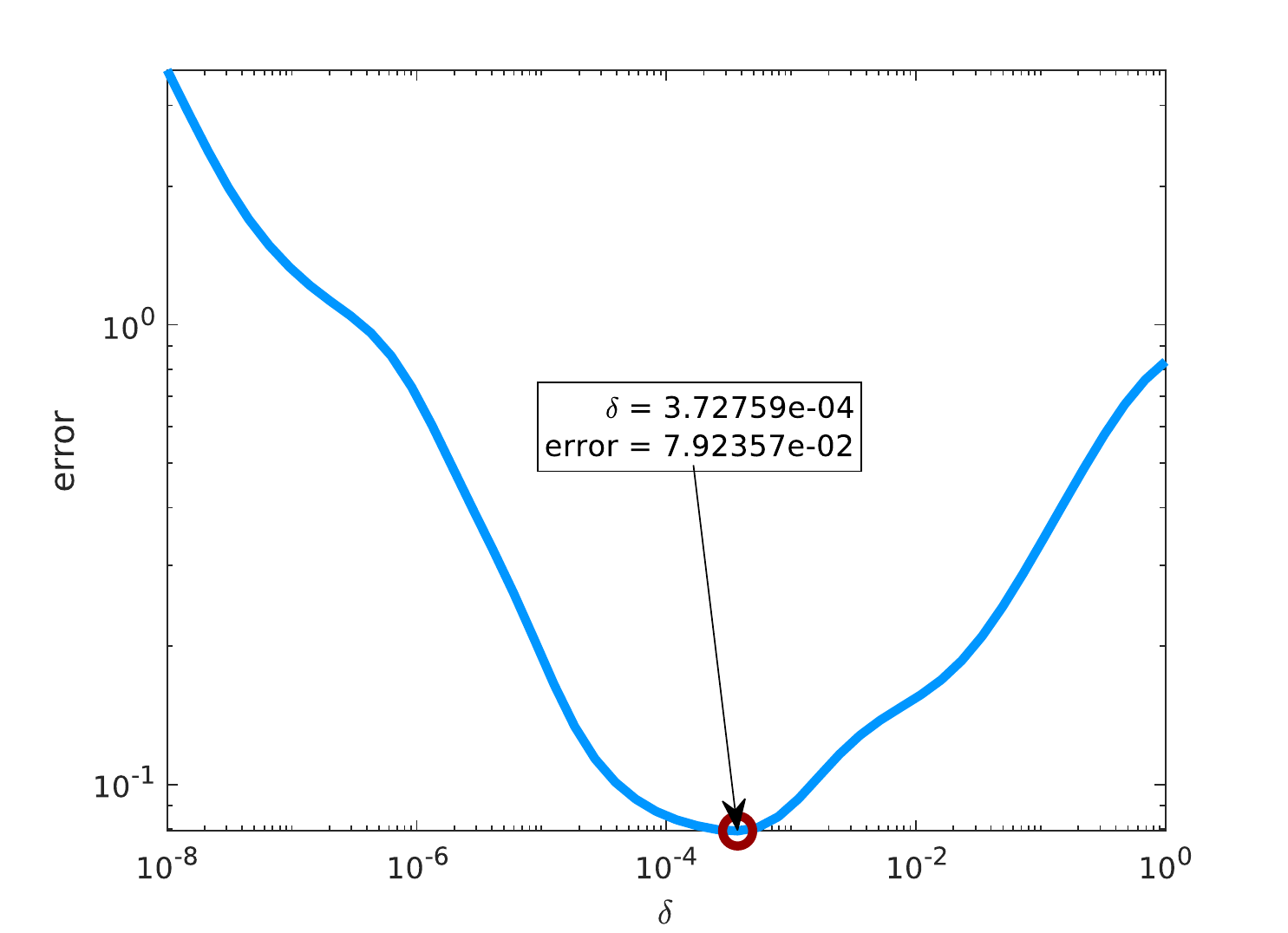}
		\hfill
		\includegraphics[width=0.32\textwidth]{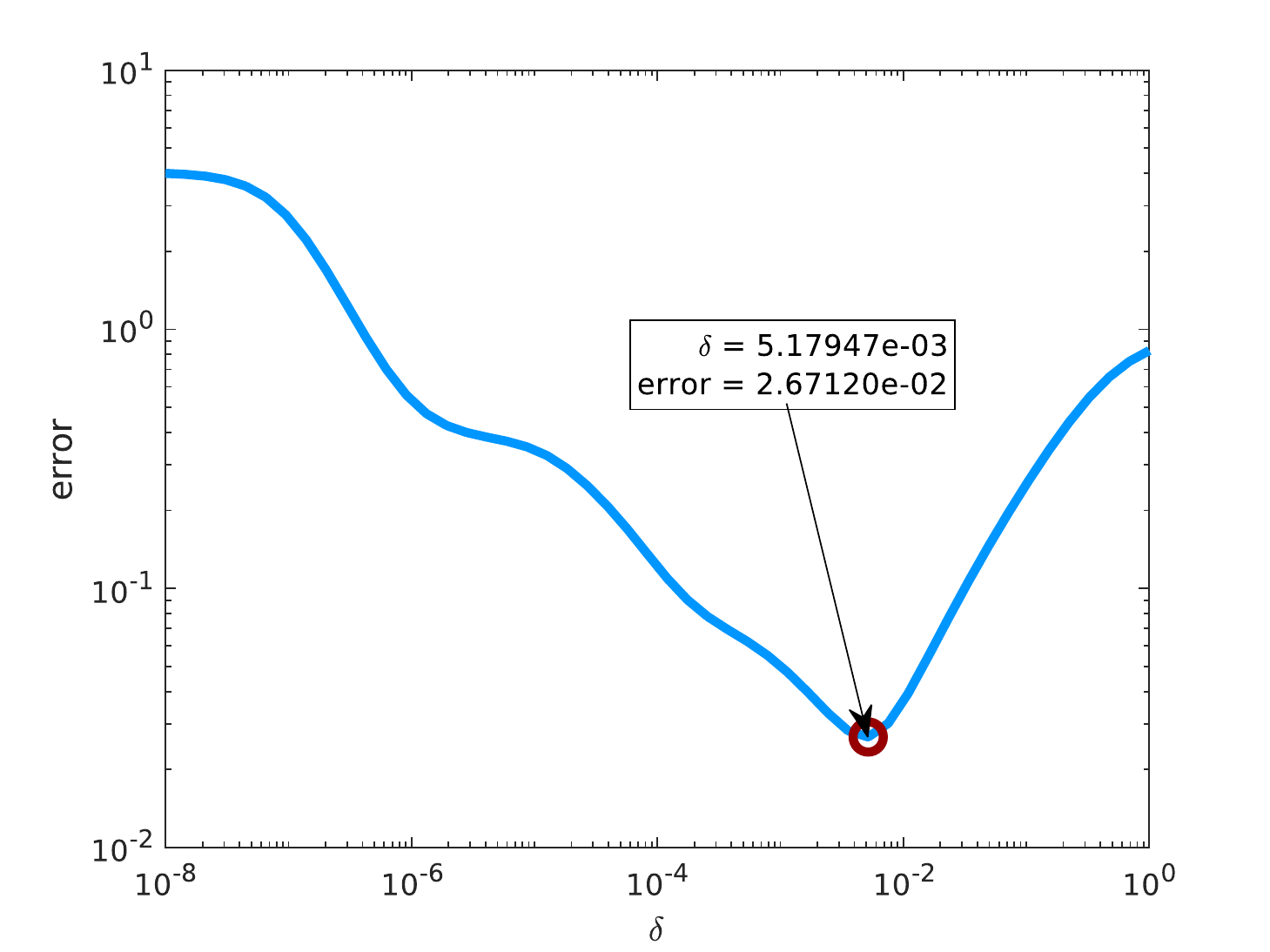}
		\caption{Ring. \pc{5} noisy data. RR-QR method with extension/restriction. Relative error for $\bfE$ in $L^2(\Omega)$-norm with respect to $\delta$ for $\nu = \delta$. $\eta$ automatically fixed from noise level.}
		\label{fig:err_delta_relax_filled_ring}
	\end{figure}

	\begin{figure}
		\centering
		\includegraphics[width=0.32\textwidth]{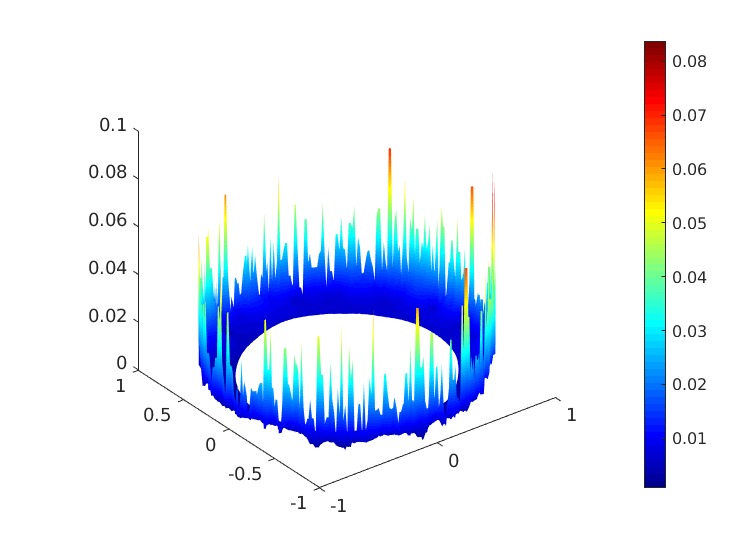}
		\hfill
		\includegraphics[width=0.32\textwidth]{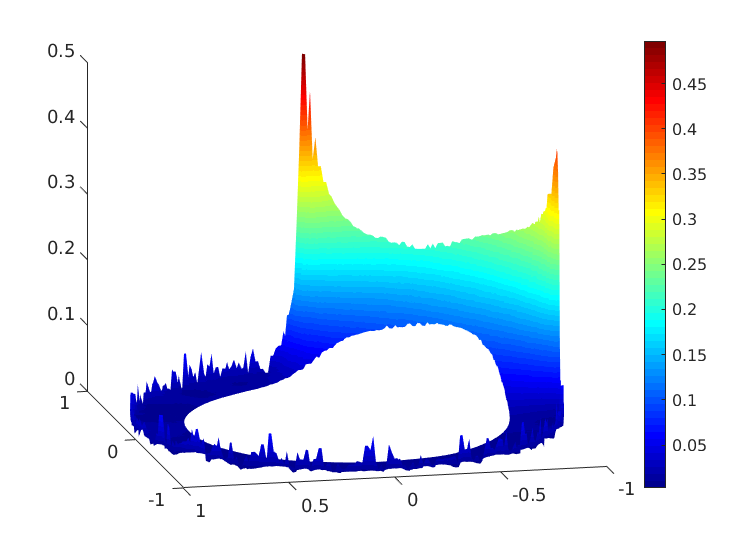}
		\hfill
		\includegraphics[width=0.32\textwidth]{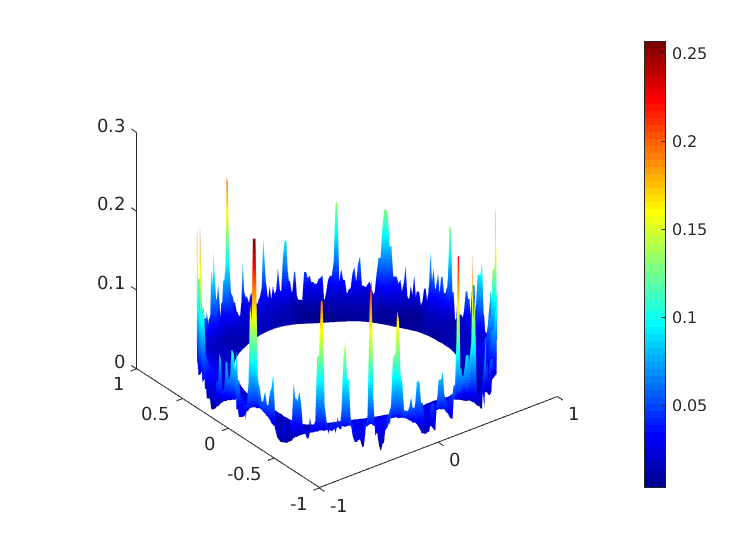}
		\caption{Ring. \pc{5} noisy data. RR-QR method with extension/restriction. Modulus of the error in the approximation of $\bfE$ for optimal $\delta$ and $\nu = \delta$. $\eta$ automatically fixed from noise level. Left: configuration GExt. Middle: configuration G34. Right: configuration GE37.}
		\label{fig:err_qr_relax_filled_ring}
	\end{figure}

	\begin{table}
		\centering
		\caption{Ring. \pc{5} noisy data. RR-QR method with extension/restriction. Errors in the approximation of $\bfE$ for optimal $\delta$ and $\nu=\delta$. $\eta$ automatically fixed from noise level. Errors on parts of the boundary refer to the $L^2$-norm of the tangential trace.}
		\label{tab:err_qr_relax_filled_ring}

		\begin{tabular}{cccccc}
			\toprule
			Configuration & $\frac{\norm{\bfE - \bfE_\delta}{0,\Omega}}{\norm{\bfE}{0,\Omega}}$ & $\frac{\norm{(\bfE - \bfE_\delta) \times \bfn}{0,\Gamma_0}}{\norm{\bfE \times \bfn}{0,\Gamma_0}}$ & $\frac{\norm{(\bfE - \bfE_\delta) \times \bfn}{0,\Gamma_1}}{\norm{\bfE \times \bfn}{0,\Gamma_1}}$ & $\frac{\norm{(\bfE - \bfE_\delta) \times \bfn}{0,\Gamma_\text{i}}}{\norm{\bfE \times \bfn}{0,\Gamma_\text{i}}}$ & $\norm{\bfF_\delta}{0,\Omega}$ \\
			\midrule
			GExt & 9.2775e-03 & 3.3871e-02 & 4.2425e-03 & 4.2425e-03 & 1.0602e-02 \\
			G34 & 7.9236e-02 & 3.0600e-02 & 1.5695e-01 & 4.2697e-02 & 8.7901e-03 \\
			GE37 & 2.6712e-02 & 3.3503e-02 & 7.7043e-02 & 1.6996e-02 & 1.3596e-02 \\
			\bottomrule
		\end{tabular}
	\end{table}

	\subsection{Numerical results in three dimensions}

	We finally present tests in 3D. The physical settings remain the same as in the previous section, and we generate a noise of \pc{5} in the data. The parameter $\eta$ is automatically fixed following the above mentioned procedure, and we choose $\nu = \delta$.

	Here,  the domain $\Omega$ is a subset of $\R^3$. Two domains are tested. The first one is the unit ball of $\R^3$, discretized with a mesh size $h = \scnum{1.43e-1}$, ending up with \num{90077} tetrahedrons and \num{114347} edges. The accessible part, $\Gamma_0$, is defined as the union of 128 electrodes covering approximatively \pc{61} of the whole boundary. This boundary configuration is illustrated in \autoref{fig:GammaE128}.

	\begin{figure}
		\centering
		\includegraphics[width=0.3\textwidth]{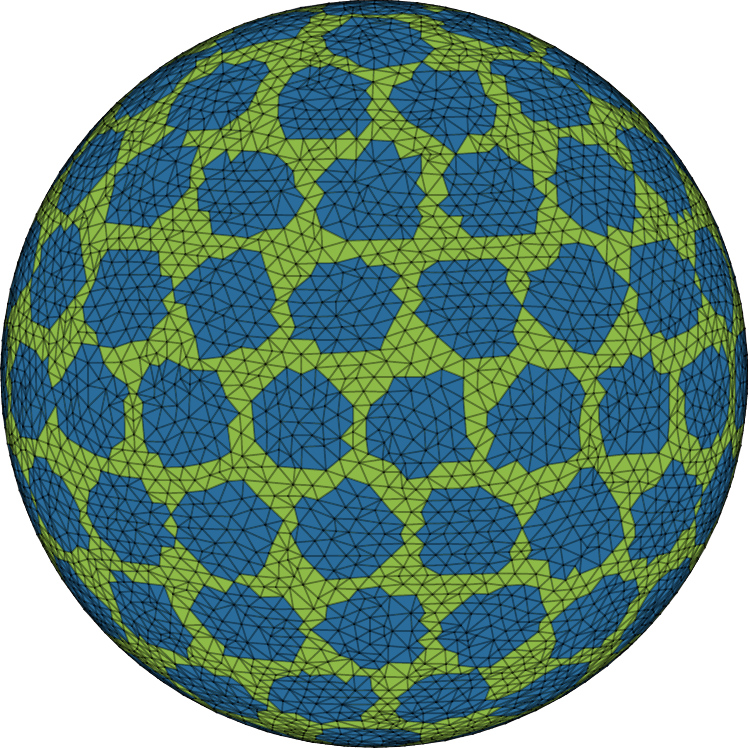}
		\caption{Accessible part $\Gamma_0$ in 3D configurations. Set of 128 electrodes.}
		\label{fig:GammaE128}
	\end{figure}

	The second domain is a ring of width 0.3, defined as the unit ball of $\R^3$, minus the ball centered at the origin and of radius 0.7. The mesh size is close to the ball's one: $h = \scnum{1.41e-1}$, ending up with \num{70420} tetrahedrons and \num{96686} edges. The boundary configuration is the same as for the unit disc: the interior boundary of the ring is part of $\Gamma_1$ and $\Gamma_0$ represents here about \pc{41} of the whole boundary. The quasi-reversibility system will be solved in this 3D ring with the extension/restriction method introduced in \autoref{algo:extension-restriction}.

	We show in \autoref{fig:err_delta_3d} the evolution of the relative error in $L^2$-norm in the whole domain with respect to $\delta$. As in the two dimensional cases, the error decreases with $\delta$, reaches a minimum, and then increases. \autoref{tab:err_3d} lists the errors obtained with the value of $\delta$ realizing this minimum. The error $\norm{\bfE_\delta - \bfE}{0,\Omega}$ is shown in this case in \autoref{fig:err_qr_3d}.
	One notices that despite the larger mesh size compared to the 2D configuration, the approximation is still very good, particularly on the inner boundary in the ring configuration where an error of \pc{5} may be obtained.
	A possible way to consider finer meshes would be to implement a domain decomposition method for 3D Maxwell equations but this is beyond the scope of this paper.

	\begin{figure}
		\centering
		\includegraphics[width=0.35\textwidth]{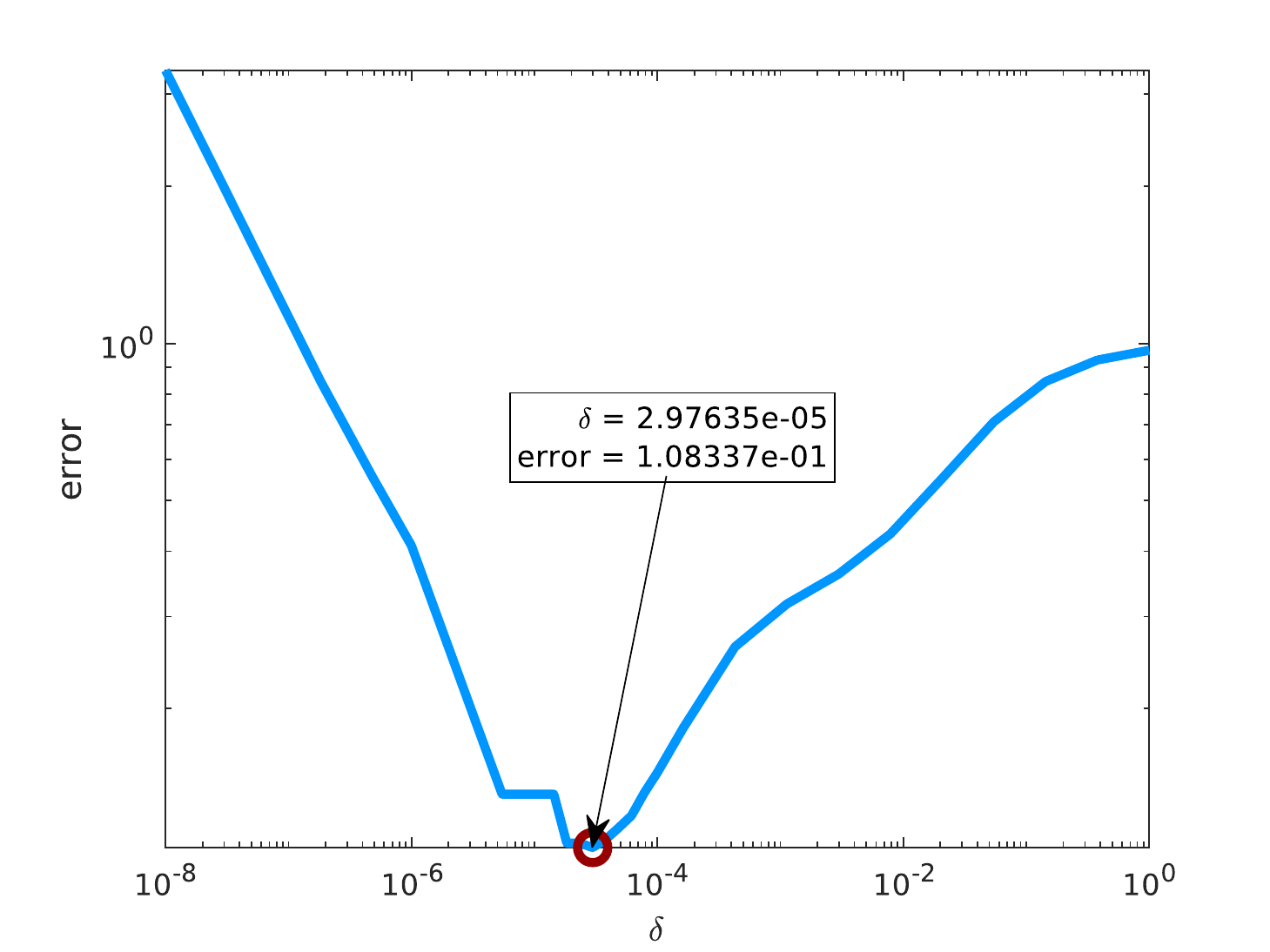}
		\hspace{0.15\textwidth}
		\includegraphics[width=0.35\textwidth]{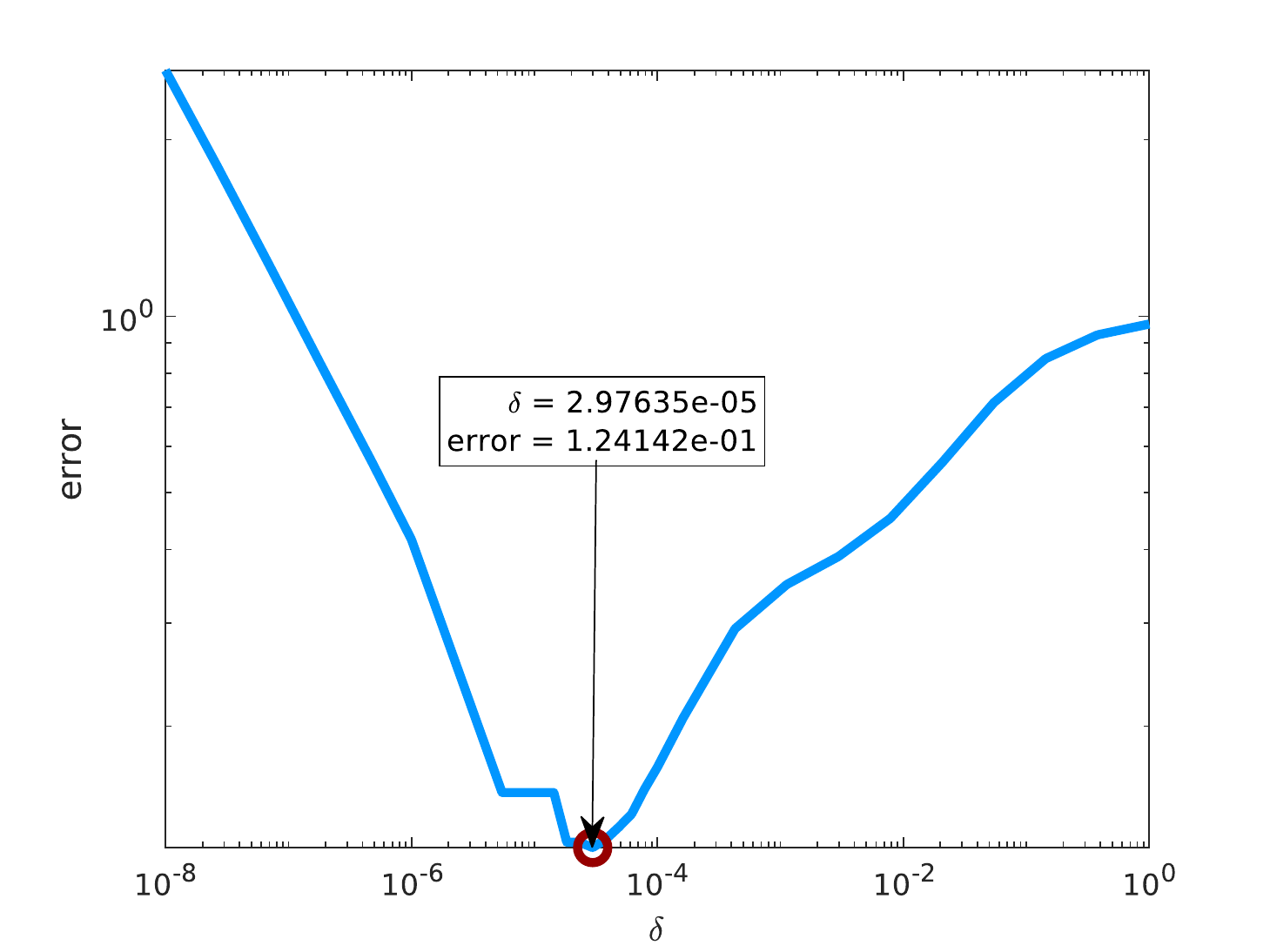}
		\caption{Relative error $\frac{\norm{\bfE - \bfE_\delta}{0,\Omega}}{\norm{\bfE}{0,\Omega}}$ with respect to the regularization parameter $\delta$. Left: Unit ball of $\R^3$. Right: 3D ring of internal radius 0.7.}
		\label{fig:err_delta_3d}
	\end{figure}

	\begin{table}
		\centering
		\caption{Errors in the unit ball of $\R^3$ and in a three dimensional ring. Errors on parts of the boundary refer to the $L^2$-norm of the tangential trace.}
		\label{tab:err_3d}

		\begin{tabular}{cccccc}
			\toprule
			Domain & $\frac{\norm{\bfE - \bfE_\delta}{0,\Omega}}{\norm{\bfE}{0,\Omega}}$ & $\frac{\norm{(\bfE - \bfE_\delta) \times \bfn}{0,\Gamma_0}}{\norm{\bfE \times \bfn}{0,\Gamma_0}}$ & $\frac{\norm{(\bfE - \bfE_\delta) \times \bfn}{0,\Gamma_1}}{\norm{\bfE \times \bfn}{0,\Gamma_1}}$ & $\frac{\norm{(\bfE - \bfE_\delta) \times \bfn}{0,\Gamma_\text{i}}}{\norm{\bfE \times \bfn}{0,\Gamma_\text{i}}}$ & $\norm{\bfF_\delta}{0,\Omega}$ \\
			\midrule
			Ball & 1.0834e-01 & 2.3864e-02 & 4.0949e-01 & $\cdot$ & 2.9336e-03 \\
			Ring & 1.2414e-01 & 1.1567e-02 & 2.7969e-01 & 5.6528e-02 & 1.6497e-03 \\
			\bottomrule
		\end{tabular}
	\end{table}

	\begin{figure}
		\includegraphics[width=0.48\textwidth]{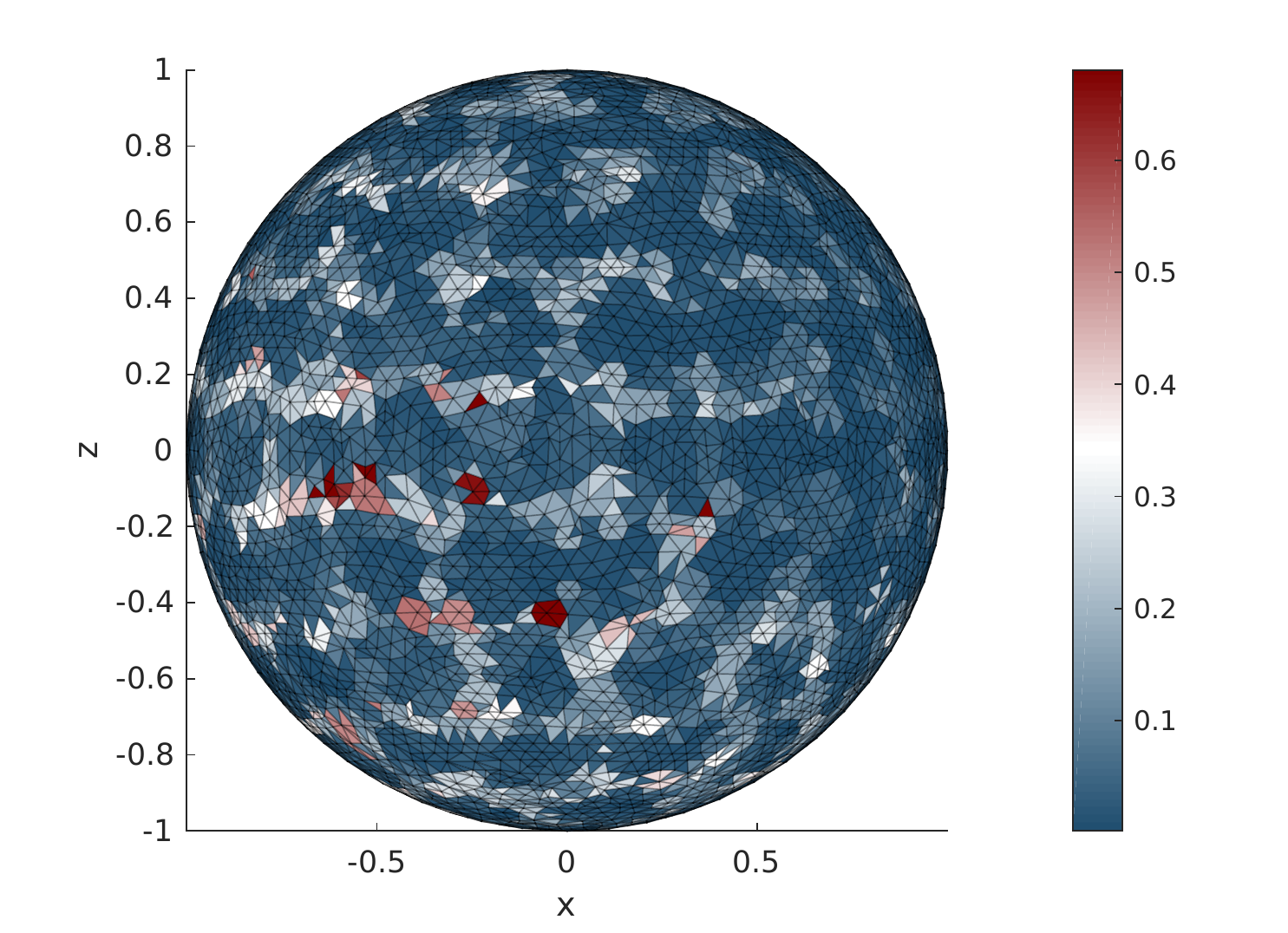}
		\hfill
		\includegraphics[width=0.48\textwidth]{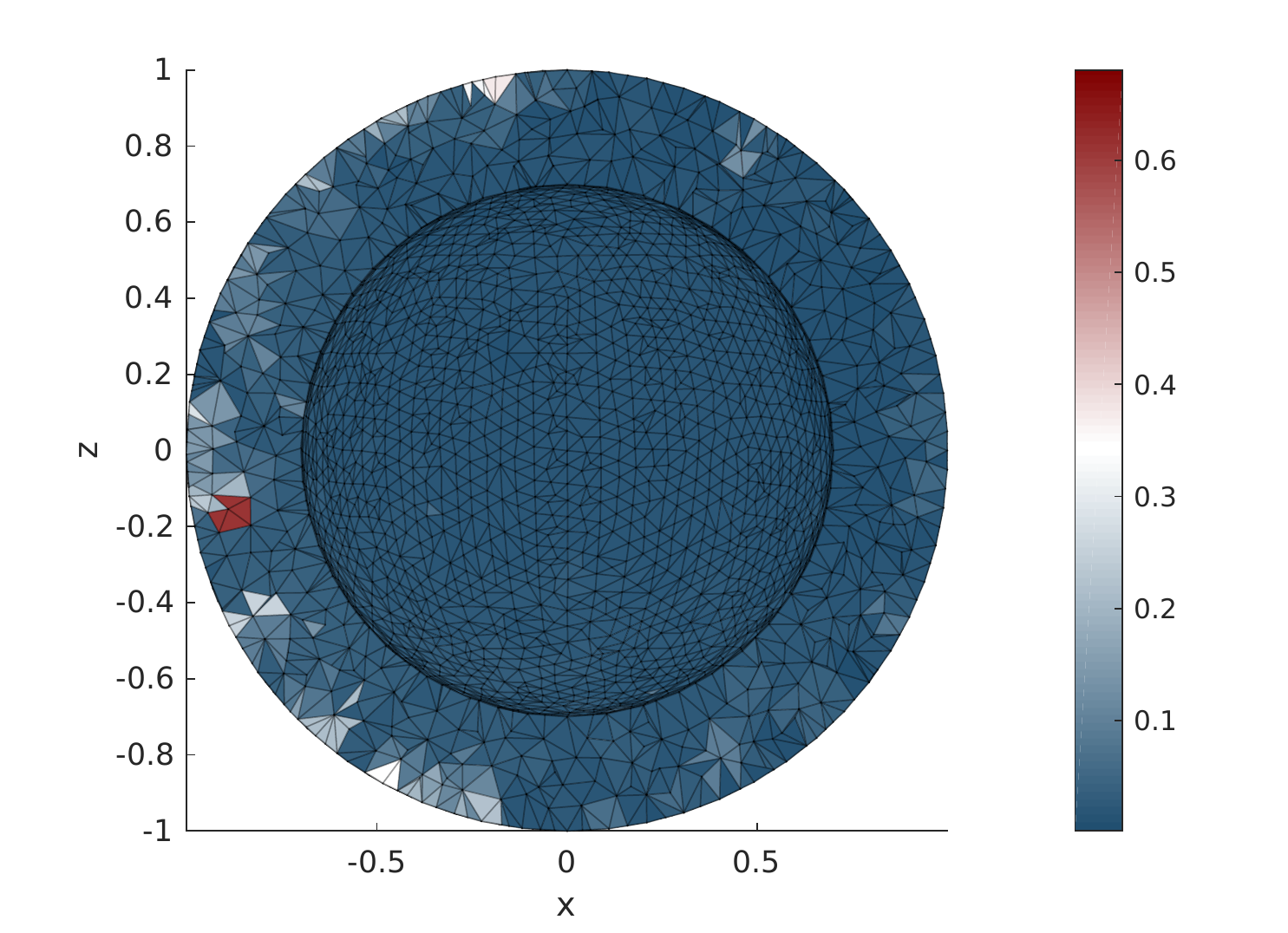}
		\caption{Error in the three-dimensional ring. Left: view of the external boundary. Right: cut showing the internal boundary.}
		\label{fig:err_qr_3d}
	\end{figure}

	\subsection{Choice of the regularization parameter. General case.}

	From the former numerical results, we can attest that the QR-method in its classical or regularized and relaxed version is a reliable tool for data completion and data transmission problems in two and three dimensional configurations.

	The choice of the regularization parameter $\delta$ is crucial for the quality of the approximation. In the previous tests, the parameter has been fixed to an optimal value by evaluating the error in the $L^2(\Omega)$-norm for each value in a given range. In practice, when the exact solution is not known, this procedure is not possible, but classical strategies as the Morozov's discrepancy principle or the L-curve method can be applied to retrieve a value for $\delta$. The L-curve method consists in drawing the graph of $\norm{\bfE_\delta}{0,\Omega}$ with respect to $\norm{\bfF_\delta}{0, \Omega}$ for different values of $\delta$. We then choose $\delta$ to minimize both norms as much as possible. In \autoref{fig:l_curve}, we show the L-curve we obtained in the ring with \pc{5} noise for the configuration G37 and the three-dimensional ring. Interestingly, the minimum of the error on $\Gamma_i$, highlighted by a circle, is located at the corner of the L-curve. This indicates that, in the tested cases, the L-curve method yields a good regularization parameter which can then be automatically computed with, for example, the triangle method~\cite{CastellanosGomezGuerra02}. The convergence of the L-curve method when noise tends to zero is however not addressed. As it is known that this method shows limitations in some cases~\cite{Hanke96}, it could be interesting to compare with other strategies (see for instance~\cite{Colton,Bazan}) but such comparisons are beyond the scope of this paper. We refer to \cite{Vogel}, Chapter 7, where different methods are compared in the context of a linear least square problem.

	\begin{figure}[hbt]
		\centering
		\includegraphics[width=0.4\textwidth]{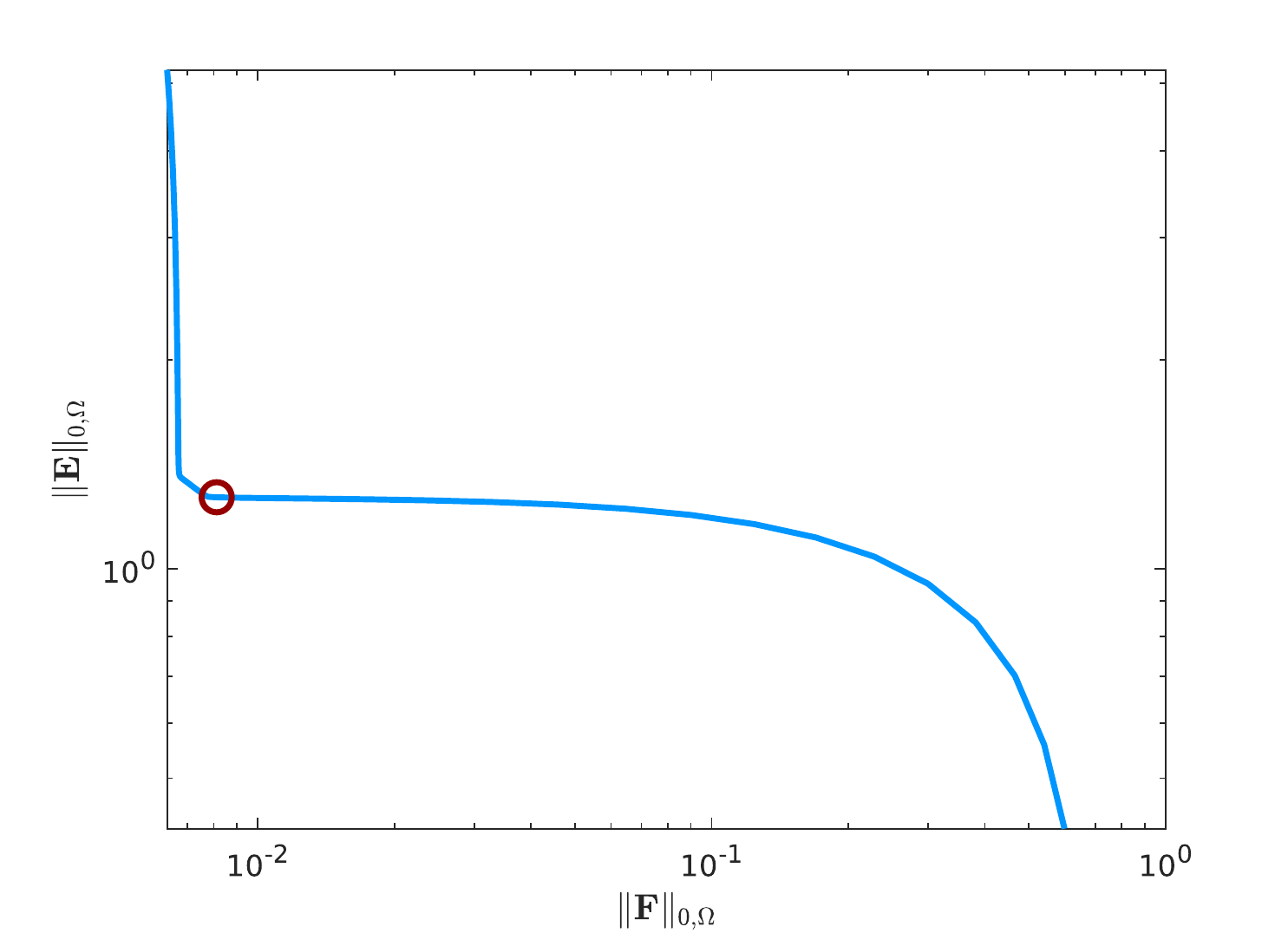}
		\includegraphics[width=0.4\textwidth]{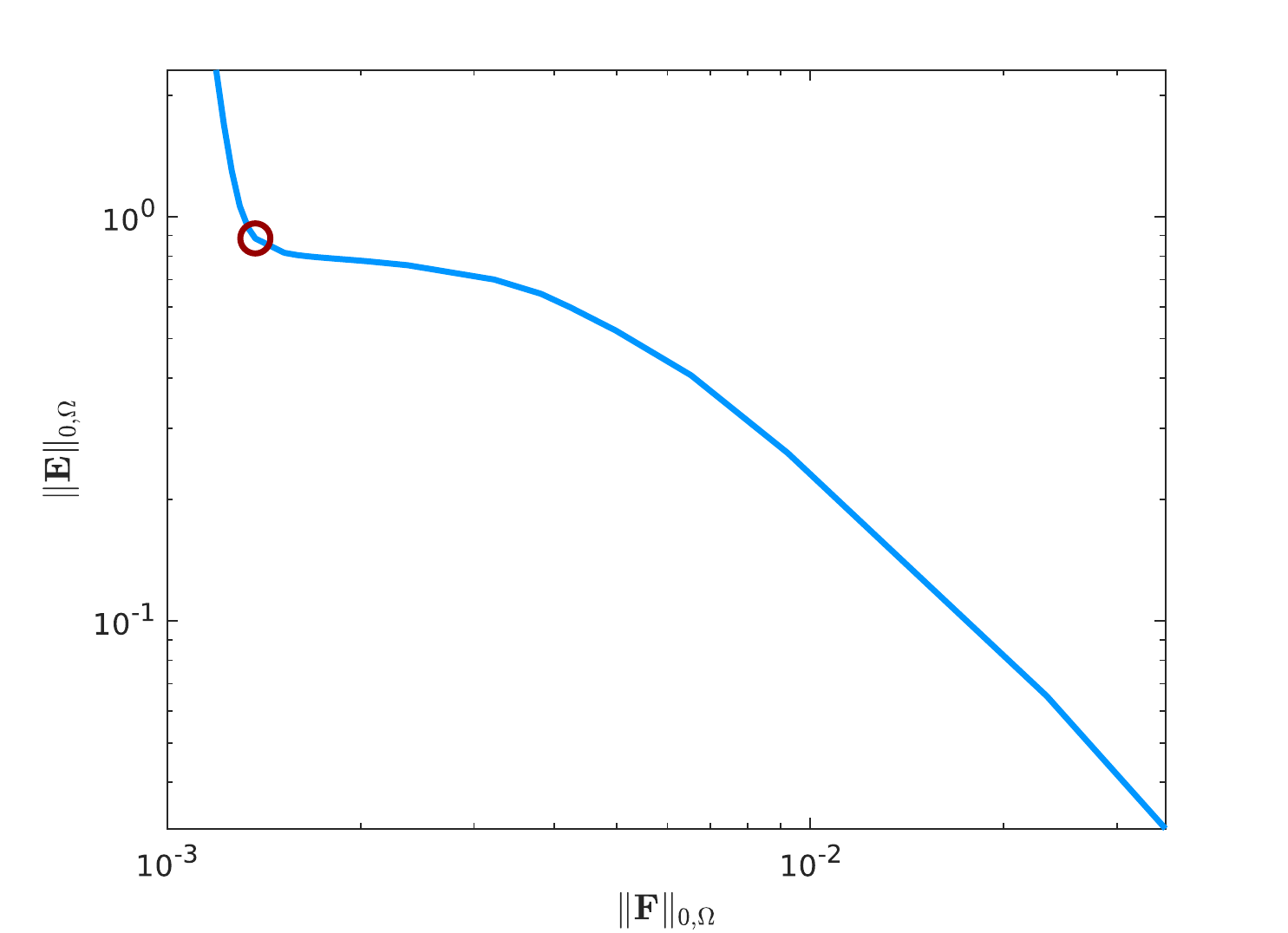}
		\caption{L-curve corresponding to the RR-QR method with extension/restriction in the ring. \pc{5} noisy data. Left: configuration G37 (2D). Right: 3D ring.}
		\label{fig:l_curve}
	\end{figure}

	The choice of the second regularization parameter $\nu$ should been made in accordance with \autoref{thm:convergence-relaxed-reg}, i.e. $\nu \coloneqq \nu(\delta)=\delta + o(\delta)$. However, this condition has only been proven to be sufficient to assure the convergence of the RR-QR method, and other choices of $\nu$ could possibly improve the results. Finally, in all our numerical experiments with the RR-QR method, the relaxation parameter $\eta$ has been fixed according to the heuristical arguments presented here above. Again, this choice is possibly not optimal and could be improved.

	\section{A few words on the link with Tikhonov regularization}
	\label{sec:tikhonov}

	Before concluding this paper, we make a remark on the link between the quasi-reversibility method formulated as a mixed problem and standard Tikhonov regularization, as mentioned in \cite{BR18}.
	We shall make precise the corresponding Tikhonov functionals for the different versions of the QR-method. To this end, denote by $A\colon \Hcurl[\Omega] \to M$ the unique linear continuous operator defined by
	\[a(\bfu,\bfpsi) = \dotprod{A\bfu}{\bfpsi}{\Hcurl[\Omega]}\ \forall \bfpsi \in M\]
	according to the Riesz representation theorem. In the same way, define by $\bfG \in M$ the unique Riesz representative of the continuous linear form $\ell(\cdot)$ such that
	\[\dotprod{\bfG}{\bfpsi}{\Hcurl[\Omega]} = \ell(\bfpsi)\ \forall \bfpsi \in M.\]
	Then, the Cauchy problem~\eqref{eq:Cauchy} consists in finding $\bfE \in \Hcurl[\Omega]$ such that $\gamma_t(\bfE) = \bff$ and $A\bfE = \bfG$.

	Now, for a given parameter $\delta > 0$, introduce the (real-valued) cost function
	\begin{equation}
		\label{def:Jdelta}
		J_\delta(\bfv) = \frac{1}{2} \norm{A\bfv - \bfG}{\Hcurl[\Omega]}^2 + \frac{\delta}{2} \norm{\bfv}{\Hcurl[\Omega]}^2
	\end{equation}
	defined on $\Hcurl[\Omega]$. The directional derivative of $J_\delta$ in the direction $\bfd \in V_0$ is given by
	\begin{equation}
		\label{eq:Jprime}
		J_\delta^\prime(\bfv)\bfd = \real{\dotprod{A\bfv - \bfG}{A\bfd}{\Hcurl[\Omega]} + \delta\dotprod{\bfv}{\bfd}{\Hcurl[\Omega]}}.
	\end{equation}
	Now, let $(\bfE_\delta,\bfF_\delta) \in V_\bff \times M$ be the solution of the classical QR-problem~\eqref{eq:qr}. Then, we get from the second equation of \eqref{eq:qr} that $\bfF_\delta = A\bfE_\delta - \bfG$. Substituting $\bfF_\delta$ by this relation in the first equation and taking the real part, yields $J_\delta^\prime(\bfE_\delta) = 0$ according to \eqref{eq:Jprime} and the definition of $A$ and $\bfG$. In conclusion, the unique solution $\bfE_\delta$ of the QR-method is the critical point of the functional $J_\delta$ defined by \eqref{def:Jdelta}.

	In the same way, one can show that the solution $\bfE_\alpha$ of the relaxed QR method~\eqref{eq:qr-relaxed} is the critical point of the functional
	\[J_\alpha(\bfv) = \frac{1}{2}\norm{A\bfv - \bfG}{V}^2 + \frac{\eta^2}{2}\norm{\gamma_t(\bfv) - \bff}{0,\Gamma_0}^2 + \frac{\delta}{2}\norm{\bfv}{V}^2\]
	defined on the vector space $V$ for a parameter set $\alpha = (\delta,\eta)$.

	Finally, if we let
	\[J_\beta(\bfv) = J_\alpha(\bfv) + \frac{\nu}{2}\norm{\gamma_t(\bfv)}{0,\Gamma_1}^2,\]
	for any $\bfv \in W$ and $\beta = (\delta,\eta,\nu)$, the solution $\bfE_\beta$ of the relaxed and regularized QR method~\eqref{eq:qr-relaxed-reg} is the critical point of $J_\beta$.

	\section{Conclusion and future works}

	In this paper, we have presented a first conclusive study of the applicability of the quasi-reversibility method to the numerical resolution of a data completion problem for Maxwell's equations.
	The main issue was to retrieve the missing data on the inaccessible part in a stable way.
	We have addressed the theoretical and numerical analysis of the method in vector spaces that are naturally involved in variational theory for Maxwell's equations. Different mixed formulations have been proposed: a classical one, and two regularized relaxed formulations which are essential to consider noisy data. We have proved their well-posedness and the convergence
	of the regularized solution to the exact solution under some conditions on the regularization and relaxation parameters. We further showed that the solution of the RR-QR method belongs to $H^{1/2}(\curl)$ away from the intersection of accessible and inaccessible parts. Further investigations in the vein of \cite{BourgeoisChesnel} on the involved singularities could help to improve the results and are part of ongoing work. A large variety of two-dimensional and three-dimensional numerical results attest the efficiency
	of the quasi-reversibility method for data completion and data transmission problems in electromagnetics.
	For the case of noisy data, we have studied in detail the numerical choice of the regularization parameters.

	This non-iterative data completion procedure would be an interesting initial step for numerical methods to reconstruct the dielectric properties of a tissue or a material. In \cite{DHL_19}, we developped for instance an algorithm that is able to localize a perturbation of small amplitude in the parameters $\eps$ and $\sigma$ from given total data of the perturbed field on the boundary. In the case of partial data, the results of the present paper could complete the missing information.
	We have in mind brain medical applications (e.g. microwave tomography \cite{Tournier,Tournier2}) and the cortical mapping problem which consists in reconstructing from electric potential measurements available at part of the scalp the potential on the inner cortex surface. This question of multi-layer data completion has been studied for Laplace's equation in the context of the EEG (electroencephalography) inverse source localization~\cite{Clerc12,Clerc16}. To the best of our knowledge, this problem hasn't been treated for electromagnetic inverse medium problems and induces challenging questions from both the theoretical and numerical point of view. This is part of ongoing work.

\end{document}